\newcounter{ENUM}
\def\To#1{\buildrel\hbox{\tiny{$#1$}}\over\longrightarrow}
 \title{Finiteness for Hecke algebras of $p$-adic groups}
\author{Jean-Fran\c cois Dat}
\address{Jean-Fran\c cois Dat, Institut de Math\'ematiques de Juss\'ieu, 4 Place Jussieu, 75252, Paris.}
\email{jean-francois.dat@imj-prg.fr }
\author{David Helm}
\address{David Helm, Department of Mathematics, Imperial College, London, SW7 2AZ, United Kingdom.}
\email{d.helm@imperial.ac.uk }
\author{Robert Kurinczuk}
\address{Robert Kurinczuk, School of Mathematics and Statistics, University of Sheffield, Sheffield, S3 7RH, United Kingdom.}
\email{robkurinczuk@gmail.com}
\author{Gilbert Moss}
\address{Gil Moss, Department of Mathematics, The University of Utah, Salt Lake City, UT 84112, USA.}
\email{moss@math.utah.edu}
\def\GG{\mathbb{G}}
\def\QQ{\mathbb{Q}}
\def\ZZ{\mathbb{Z}}
\def\ZM{\mathbb{Z}}
\def\NM{\mathbb{N}}
\def\FF{\mathbb{F}}
\def\Spec{\mathrm{Spec}}
\def\Hom{\mathrm{Hom}}
\def\End{\mathrm{End}}
\def\Fr{\mathrm{Fr}}
\def\Rep{\mathrm{Rep}}
\def\ind{\mathrm{ind}}
\def\Irr{\mathrm{Irr}}
\def\Exc{\mathrm{Exc}}
\def\LG{{\tensor*[^L]\G{}}}
\def\ZZz{\mathfrak{Z}}
\def\ZG{\mathfrak{Z}}
\def\HG{{\hat G}}
\def\HM{{\hat M}}
\def\HB{{\hat B}}
\def\HT{{\hat T}}
\def\HH{{\hat H}}
\def\Ql{\overline{\mathbb{Q}}_\ell}
\def\I{\mathrm{I}}
\def\G{\mathrm{G}}
\def\I{\mathrm{I}}
\def\R{\mathrm{R}}
\def\Irr{\mathrm{Irr}}
\def\OC{{\mathcal O}}
\def\EC{{\mathfrak E}}
\newcommand{\margh}[1]{}
\theoremstyle{theorem}
\newtheorem{theorem}{Theorem}[section]
\newtheorem{lemma}[theorem]{Lemma}
\newtheorem{corollary}[theorem]{Corollary}
\newtheorem{remark}[theorem]{Remark}
\numberwithin{equation}{section}
\def\Zl{\overline{\mathbb{Z}_\ell}}
\def\GL{\mathrm{GL}}
\def\Zlb{\mathbb{Z}_\ell}
\def\Qlb{\mathbb{Q}_\ell}
\begin{document}

\maketitle

\begin{abstract}
  Let $G$ be a reductive group over a non-archimedean local field $F$ of residue
  characteristic $p$. We prove that the Hecke algebras of $G(F)$ with coefficients in any
  noetherian $\Zlb$-algebra $R$
  with $\ell\neq p$, are finitely generated modules over their centers, and that these centers
  are finitely generated $R$-algebras.  Following Bernstein's original strategy, we then
  deduce that ``second adjointness'' holds for smooth representations of
  $G(F)$ with coefficients in any $\ZZ[\frac{1}{p}]$-algebra.  These results had been conjectured for a long time. The crucial new tool that unlocks the problem  is the Fargues-Scholze
  morphism between a certain ``excursion algebra'' defined on the 
  Langlands parameters side and the Bernstein center of $G(F)$. Using this bridge, our
  main results are representation theoretic counterparts of the finiteness of certain
  morphisms between coarse moduli spaces of local Langlands parameters that we also prove here,
  which may be of independent interest.
  \end{abstract}

\tableofcontents
  
\section{Main results}

  Let $\GG$ be a reductive group over a non-archimedean local field $F$ of residue
  characteristic $p$. The group
  $G:=\GG(F)$ is then locally profinite, hence for any
  open compact subgroup $H$ of $G$, the free abelian group
  $\ZZ[H\backslash G/H]$ carries a structure of an associative ring,  called a Hecke
  ring. One of the main results of this paper is the following statement.
  \begin{theorem}\label{thm_finiteness}
    For any prime $\ell\neq p$ and any noetherian $\ZZ_{\ell}$-algebra $R$, the base
    change
    $R[H\backslash G/H]$ is
    a finitely generated module over 
    its center, which is a finitely generated (commutative) $R$-algebra.
  \end{theorem}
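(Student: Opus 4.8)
The plan is to realize $R[H\backslash G/H]\cong e_H\mathcal H_R(G)e_H$ — where $\mathcal H_R(G)$ is the full Hecke algebra of $G$ with coefficients in $R$ and $e_H$ the idempotent attached to $H$ — as a finite module over a finitely generated commutative subalgebra of its center. The relevant subalgebra comes from the Fargues--Scholze construction: there is a canonical ring homomorphism from the algebra $\mathcal O(Z^1(W_F,\HG)_R)^{\HG}$ of $\HG$-invariant functions on the moduli space of Langlands parameters (valued in the dual group $\HG$, or in the $L$-group $\LG$ when $\GG$ is not split) to the Bernstein center $\ZG_R(G)=Z(\mathcal H_R(G))$; composing with $z\mapsto ze_H$ and restricting to $\mathcal H:=e_H\mathcal H_R(G)e_H$ produces a central subalgebra $A\subseteq Z(\mathcal H)$. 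Granting (a) that $A$ is a finitely generated $R$-algebra and (b) that $\mathcal H$ is a finite $A$-module, the theorem follows formally: $R$, hence $A$, is noetherian, so $Z(\mathcal H)$ — being an $A$-submodule of the finite $A$-module $\mathcal H$ — is finite over $A$, hence a finitely generated $R$-algebra, and $\mathcal H$ is finite over $Z(\mathcal H)$ a fortiori since $A\subseteq Z(\mathcal H)$. By standard limit arguments one reduces first to $R$ a finitely generated $\ZZ_\ell$-algebra, and — using base-change properties of the excursion algebra and of Hecke algebras — one may aim to bootstrap everything from $R=\ZZ_\ell$ and $R=\Fl$.

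For (a): by work of Fargues--Scholze and of Dat--Helm--Kurinczuk--Moss, $Z^1(W_F,\HG)_R$ is a disjoint union of affine schemes of finite type over $R$, and since $\HG$ is geometrically reductive over $\ZZ_\ell$, geometric invariant theory shows the invariant ring is a finitely generated $R$-algebra on each connected component. A bounded-ramification argument shows that $e_H$ is supported on only finitely many components, so $A$ — a quotient of the finitely generated invariant ring of the union of those components — is finitely generated over $R$.

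Step (b) is, I expect, the main obstacle. By the Bernstein decomposition — only finitely many blocks $\mathfrak s$ contain a nonzero object with $H$-fixed vectors — it reduces to showing that each block Hecke algebra $e_{\mathfrak s}\mathcal H$ is finite over the image in its center of the corresponding factor of $A$. This is where the paper's new geometric input enters: the finiteness of certain morphisms between coarse moduli spaces of Langlands parameters — for instance the restriction morphism $\overline{Z^1}(W_F,\HM)_R\to\overline{Z^1}(W_F,\HG)_R$ attached to a Levi subgroup $M$, and the morphism to the parameter space of $\GL_N$ attached to a faithful representation $\HG\hookrightarrow\GL_N$. One transports these across the Fargues--Scholze bridge by using its compatibility with parabolic induction and with the action of unramified characters; this supports an induction on semisimple rank, reducing to supercuspidal inertial classes, where the block Hecke algebra is, up to Morita equivalence, finite over the coordinate ring of a quotient of the torus of unramified twists by a finite group — a ring which, by the compatibility with unramified twisting, is a finite extension of the relevant part of $A$. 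The delicate point is precisely that the finiteness must be routed through the excursion algebra, so that no unproven instance of the local Langlands correspondence is invoked; the geometric finiteness theorems are what make this routing possible.

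Combining (a) and (b) with the formal argument above would complete the proof: $Z(R[H\backslash G/H])$ is a finitely generated $R$-algebra and $R[H\backslash G/H]$ is a finite module over it.
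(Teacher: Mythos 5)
Your global architecture is the right one, and your formal reduction is fine: once one has a central subalgebra $A$ of $R[H\backslash G/H]$ coming from the excursion/parameter side that is a finitely generated $R$-algebra and over which the Hecke algebra is a finite module, the theorem follows exactly as you say, and the geometric input you point to (finiteness of $Z^{1}(W_{F}^{0}/P_{F}^{e},\HM)\sslash\HM\To{}Z^{1}(W_{F}^{0}/P_{F}^{e},\HG)\sslash\HG$ for a Levi $\HM$, equivalently finiteness of $\Exc(W_{F}^{0}/P_{F}^{e},\HG)_{\rm red}\To{}\Exc(W_{F}^{0}/P_{F}^{e},\HM)_{\rm red}$) is indeed the engine of the paper. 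The genuine gap is in your step (b). You reduce via ``the Bernstein decomposition'' to block Hecke algebras $e_{\mathfrak s}\mathcal H$ attached to inertial classes, run an induction on semisimple rank down to supercuspidal classes, and there assert that the block algebra is, up to Morita equivalence, finite over the ring of functions on a quotient of the torus of unramified twists by a finite group. Over $\Zlb$ or $\Fl$ none of this is available for general $G$: the decomposition of $\Rep_{\Zlb}(G)$ into inertial blocks, uniqueness of supercuspidal support, and any type-theoretic or Morita description of ``supercuspidal blocks'' are precisely the missing ingredients that restricted previous results to $\GL_n$ (Helm) or to groups with enough type theory (the paper says as much in the introduction, and uniqueness of supercuspidal support can genuinely fail mod $\ell$). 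So your terminal step assumes, for supercuspidal blocks, a structural statement that is essentially the theorem itself in that case; the routing through the excursion algebra does not repair this, because compatibility with unramified twisting alone does not give finiteness of the block over the image of $A$.

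What the paper does instead, and what your argument is missing, is a way to avoid integral block theory entirely. Only the depth decomposition (Moy--Prasad), which does exist integrally, is used to cut $R[G/H]$ into pieces with projective generators; Bernstein's theory over $\Ql$ is used solely to embed $\Zlb'[G/H]$ (after choosing lattices in supercuspidals) into a finite direct sum $\bigoplus_{P} i_{P}(W_{P})$ with each $W_{P}$ a finitely generated, $\ell$-torsion-free \emph{cuspidal} $\Zlb' M_{P}$-module, and one notes that $\ZZz$-finiteness passes to subobjects. The key representation-theoretic input replacing any block structure is then \cite[Lemme 4.2]{datfinitude}: such a $W$ is admissible over $\ZZz_{\Zlb'}(Z_{M})$, the center for the maximal central torus of $M$. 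Combining this with the Fargues--Scholze properties (isomorphism for tori, compatibility with central characters, continuity in $e$) makes $W$ admissible over $\Exc(W_{F}^{0}/P_{F}^{e},\HM)_{\Zlb'}$; compatibility with (normalized) parabolic induction and the finiteness of $\Exc(W_{F}^{0}/P_{F}^{e},\HG)_{\rm red}\To{}\Exc(W_{F}^{0}/P_{F}^{e},\HM)_{\rm red}$ then give admissibility of $i_{P}(W)$ over the image of the excursion algebra of $\HG$, and a separate short argument shows the image of $\ZZz_{\Zlb'}(G)$ in $\End(i_P W)$ is finite over that image. Two further points your sketch glosses over but which matter: the parameter-side finiteness is only proved for the \emph{reduced} excursion algebras, so one must check the action factors through the reduced quotient (this is where $\ell$-torsion-freeness of $W$, or reducedness of the Bernstein center, is used); and no embedding $\HG\hookrightarrow\GL_N$ or induction on semisimple rank is needed --- the Levi case $\HH=\HM$ of the finiteness theorem suffices.
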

  Here is an equivalent formulation in terms of smooth representations. Denote by
  ${\rm  Rep}_{R}(G)$ the category of all smooth $R G$-modules and by
  $\ZZz_{R}(G)$ the center of this category. We define an $RG$-module to be
  \emph{$\ZZz$-finite} if
  \begin{itemize}
  \item the image of $\ZZz_{R}(G)\To{}{\rm End}_{RG}(V)$ is a finitely
    generated $R$-algebra, and
  \item $V$ is admissible over $\ZZz_{R}(G)$, i.e. $V^{H}$ is a finitely generated
    $\ZZz_{R}(G)$-module for any compact open subgroup $H$ of $G$.
  \end{itemize}
  Then the
  above theorem is equivalent to
  the following one (see Lemma \ref{lemma_equivalence_main_thms}).
  \begin{theorem}\label{thm_Z-admissible}
    (Same hypothesis on $R$.) Any finitely generated smooth $R G$-module $V$ is $\ZZz$-finite.
  \end{theorem}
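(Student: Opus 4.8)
By Lemma~\ref{lemma_equivalence_main_thms} this is equivalent to Theorem~\ref{thm_finiteness}, so I would first establish that equivalence and then prove Theorem~\ref{thm_finiteness}. For the equivalence, the key observation is that a finitely generated smooth $RG$-module $V$ is generated by $V^{H_0}$ for some compact open subgroup $H_0$, and hence is supported on the finitely many Bernstein blocks having nonzero $H_0$-invariants; in particular $V^{H_0}$ is a finitely generated module over the Hecke ring $R[H_0\backslash G/H_0]$, whose blockwise decomposition only involves these blocks. Granting Theorem~\ref{thm_finiteness}, $R[H_0\backslash G/H_0]$ is module-finite over its center $Z_{H_0}$, a finitely generated, hence Noetherian, $R$-algebra; so $V^{H_0}$ is a Noetherian $Z_{H_0}$-module, and (choosing $H_0$ deep enough that $(-)^{H_0}$ identifies each relevant block with the module category over the corresponding summand of $R[H_0\backslash G/H_0]$) the center $\ZZz_R(G)$ acts on $V$ through a quotient of $Z_{H_0}$. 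Thus the image of $\ZZz_R(G)$ in $\End_{RG}(V)$ is a finitely generated $R$-algebra, which is the first condition for $\ZZz$-finiteness. For the second, for any compact open $H$ the module $V^H$ is finitely generated over $R[H\backslash G/H]$ --- a standard fact going back to Bernstein --- hence over its Noetherian center, hence over $\ZZz_R(G)$. The converse implication is obtained by applying Theorem~\ref{thm_Z-admissible} to the finitely generated modules $\ind_H^G R$ (compact induction), whose endomorphism ring is (the opposite of) $R[H\backslash G/H]$.

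The real content is Theorem~\ref{thm_finiteness}. Using the Bernstein decomposition of $\Rep_R(G)$ (available over any Noetherian $\Zlb$-algebra with $\ell\ne p$) one reduces to a blockwise statement --- each block has a finitely generated center, and a suitable Hecke ring $R[H\backslash G/H]$ is module-finite over it --- and by Noetherian approximation one may further reduce the coefficients to a finite-type $\Zlb$-algebra, even to $\Zl$. The crucial new tool is then the Fargues--Scholze morphism $\Exc_R(G)\to\ZZz_R(G)$ from the excursion algebra of Langlands parameters to the Bernstein center. On the Langlands side the excursion algebra is the ring of functions on the coarse moduli space of semisimple $L$-parameters, known to be a finitely generated $R$-algebra; the heart of the matter is to show that $\ZZz_R(G)$ is \emph{module-finite} over the image of $\Exc_R(G)$, together with the module-finiteness of each $R[H\backslash G/H]$ over the image of $\ZZz_R(G)$. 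These are the representation-theoretic counterparts, transported across the Fargues--Scholze bridge, of the finiteness of certain morphisms between coarse moduli spaces of local Langlands parameters --- roughly, comparing the parameter space for $G$ with those for its Levi subgroups, and ``enhanced'' with ``bare'' parameter spaces --- which one proves on the geometric side by GIT and deformation theory.

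The step I expect to be the main obstacle is precisely this transport across the bridge: one must control how the Fargues--Scholze construction interacts with parabolic induction and with the block decomposition, so that module-finiteness established on the Galois/geometric side (honest finite-type schemes over $\Zl$, GIT quotients) forces module-finiteness of $\ZZz_R(G)$ over the image of the excursion algebra, and then of the Hecke rings over $\ZZz_R(G)$. A secondary difficulty is the descent from $\Zl$ and its finite-type analogues, where the parameter geometry and the Fargues--Scholze morphism are readily available, back to an arbitrary Noetherian $\Zlb$-algebra $R$; this should follow from base-change compatibility and the descent of finite generation and module-finiteness along the relevant morphisms, but must be handled with care.
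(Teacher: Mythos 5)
There is a genuine gap: your proposal correctly identifies the Fargues--Scholze bridge and the parameter-side finiteness as the engine, but it never supplies the representation-theoretic reduction that makes the bridge usable, and this is exactly the content of the paper's proof. Concretely, the excursion algebra acts on everything through ${\rm FS}_G$, but by itself this gives no admissibility statement; one needs an \emph{input} finiteness, and the paper gets it by (a) embedding each $\Zlb'[G/H]$ (equivalently, each finitely generated projective) into a finite direct sum $\bigoplus_P i_P(W_P)$ with $W_P$ \emph{cuspidal}, finitely generated and $\ell$-torsion free, using Bernstein's block theory over $\Ql$ together with integral lattices (Lemma \ref{lemma_embedding_cuspidal}), and (b) invoking the known admissibility of such cuspidal $W$ over $\ZZz_{\Zlb'}(Z_M)$ (\cite[Lemme 4.2]{datfinitude}). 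Only then do the specific properties of ${\rm FS}$ --- isomorphism for tori, compatibility with central characters, continuity (factoring through some $\Exc(W_F^0/P_F^e,\HM)$), and compatibility with parabolic induction --- convert this into admissibility of $i_P(W)$ over $\Exc(W_F^0/P_F^e,\HM)_{\Zlb'}$, after which Corollary \ref{cor_parameters_finiteness} (finiteness of $\Exc(\HG)_{\rm red}\to\Exc(\HM)_{\rm red}$) finishes the admissibility over $\ZZz_{\Zlb'}(G)$. Your sketch flags this ``transport across the bridge'' as the expected obstacle but does not resolve it, so the proposal is essentially a restatement of the problem at its hardest point. You also omit the second half of $\ZZz$-finiteness (finite generation of the image of $\ZZz_R(G)$ in $\End_{RG}(V)$), which the paper handles by showing $\End_{\Zlb' G}(V)$ injects into $\End_{\EC_V}(V^H)$ for suitable $H$ of bounded depth, and the subtlety that the action must factor through the \emph{reduced} excursion algebra (the nilradical is $\ell$-torsion, $W$ is $\ell$-torsion free) since the parameter-side finiteness is only proved after reduction.

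A secondary but important error: you assert a Bernstein block decomposition of $\Rep_R(G)$ ``available over any Noetherian $\Zlb$-algebra with $\ell\neq p$'' and propose to reduce blockwise. No such decomposition is known for general $G$ in the modular setting (it was previously known only for $\GL_n$ via uniqueness of supercuspidal support, which is one reason these finiteness theorems were open); the paper only uses the Moy--Prasad depth decomposition over $R$ and the Bernstein decomposition over $\Ql$. Likewise the reduction in coefficients is not by ``Noetherian approximation'' but by the elementary base-change/faithfully-flat-descent statements of Lemma \ref{lemma_reduc_Zl}, reducing to $R=\Zlb'=\Zlb[\sqrt q]$. Your sketch of the equivalence with Theorem \ref{thm_finiteness} is broadly in the spirit of Lemma \ref{lemma_equivalence_main_thms}, but the implication you actually need there (from the Hecke-algebra statement to $\ZZz$-finiteness) again uses the depth decomposition and projective generators of the depth factors, not Bernstein blocks over $R$.
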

  
  When $R =\overline\QQ_{\ell}$, 
  these statements are
  famous theorems of Bernstein. For  $R=\ZZ_{\ell}$ or $R=\FF_{\ell}$, the only
  previously known case was for $G={\rm GL}_{n}$ in \cite{HelmBC}, and the proof there
  relies on very specific features of ${\rm GL}_{n}$ such as Vigneras' ``uniqueness of
  supercuspidal support''.   There have been also
  partial results for more general groups. For example, in
   \cite{datfinitude}, the fact that $R[H\backslash G/H]$ is a
   noetherian ring (albeit with no control on the center)  was proved to be
   implied by the so-called ``second adjointness'' between parabolic
  functors, and the latter property was established  for groups having a suitable form of type theory,  such as classical groups or ``very tame'' groups.  However, this second adjointness
  property was first discovered by
  Bernstein for complex representations as a consequence of his theorem on finiteness of
  Hecke algebras. Following his argument,
  we also prove :

  \begin{corollary}[Second adjointness] \label{cor_second_adj}
    For any $\ZZ[\frac{1}{p}]$-algebra $R$, and
    for all pairs of opposite parabolic subgroups $(P,\bar P)$ in $G$ with common Levi
    component $M=P\cap \bar P$, the twisted opposite Jacquet functor
     $\delta_{P}. \R_{\bar P}:\Rep_{R}(G)\To{}\Rep_{R}(M)$
    is right adjoint to the  parabolic induction functor
    $\I_{P}:\Rep_{R}(M)\To{}\Rep_{R}(G)$, where $\delta_{P}: M\To{}
      \ZZ[\frac{1}{p}]^{\times}$ denotes the modulus character of $P$.
  \end{corollary}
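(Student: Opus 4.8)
The plan is to transplant Bernstein's deduction of second adjointness from the finiteness of Hecke algebras over their centers --- now available over an arbitrary noetherian coefficient ring by Theorem~\ref{thm_finiteness} --- into the general-coefficient setting. Write $I_{P}$ for parabolic induction and $R_{P}$ for the (unnormalized) Jacquet functor. First adjointness, i.e. the adjunction with $R_{P}$ left adjoint to $I_{P}$, is formal (Frobenius reciprocity for smooth induction from the closed subgroup $P$, together with the compactness of $P\backslash G$). Since $I_{P}$ is exact and commutes with arbitrary direct sums, it admits a right adjoint $I_{P}^{\vee}$; the statement to prove is that $I_{P}^{\vee}\cong\delta_{P}R_{\bar P}$, i.e. that the canonical natural transformation $\mu\colon I_{P}^{\vee}\to\delta_{P}R_{\bar P}$ --- obtained by adjunction from the natural transformation $\sigma\to\delta_{P}R_{\bar P}I_{P}\sigma$ furnished by the Bernstein--Zelevinsky geometric lemma (the open $\bar P$-orbit on $G/P$) --- is an isomorphism. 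Equivalently, the induced map $\Hom_{G}(I_{P}\sigma,\pi)\to\Hom_{M}(\sigma,\delta_{P}R_{\bar P}\pi)$ should be bijective for all $\sigma,\pi$.

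First I would reduce the coefficients. An arbitrary $\ZZ[\frac{1}{p}]$-algebra is a filtered colimit of noetherian ones, and $I_{P}$, $R_{\bar P}$, and the relevant $\Hom$-groups all commute with the corresponding base change, so it suffices to treat noetherian $R$; a Nakayama-type argument then reduces further to $R$ a field of characteristic $\neq p$. Over such an $R$ the classical machinery applies: $R_{\bar P}$ preserves admissibility, and for $\sigma$ \emph{and} $\pi$ \emph{both admissible} the desired bijection is the classical one, proved from Casselman's duality $R_{\bar P}\pi\cong\widetilde{R_{P}\tilde\pi}$ together with first adjointness and the identity $\widetilde{I_{P}\tau}\cong I_{P}\tilde\tau$. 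No finiteness is needed for this; in particular $\mu_{\pi}$ is an isomorphism whenever $\pi$ is admissible.

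The heart of the matter --- and the only place where Theorem~\ref{thm_finiteness} is genuinely used --- is the passage from admissible $\pi$ (and $\sigma$) to the general case. By Theorem~\ref{thm_finiteness} each $R[H\backslash G/H]$ is a finite module over a finitely generated, hence noetherian, $R$-algebra; by Theorem~\ref{thm_Z-admissible} every finitely generated smooth representation is admissible over the center of the category, and consequently, block by block, genuinely admissible. Two consequences are decisive: \emph{(i)} $I_{P}$ preserves finite generation, hence sends compact objects to compact objects, so that $I_{P}^{\vee}$ commutes with filtered colimits, exactly as $\delta_{P}R_{\bar P}$ does; and \emph{(ii)} $I_{P}$, and hence $I_{P}^{\vee}$, respects the decomposition of $\Rep_{R}(-)$ into blocks up to finite subsums. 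Granting these, $\mu$ is a natural transformation between functors preserving filtered colimits, so its being an isomorphism may be checked on finitely generated $\pi$; and a finitely generated $\pi$ is the direct sum of its block components, each of which is admissible by the finiteness theorem and on which $\mu$ is an isomorphism by the previous paragraph. The same dévissage in the variable $\sigma$ --- reducing to a standard projective generator and then to its admissible block components, using that $\Rep_{R}(M)$ is locally noetherian --- finishes the argument.

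The main obstacle is exactly point \emph{(i)}: that parabolic induction preserves finite generation over an arbitrary noetherian $R$ (together with the attendant control of admissibility and of the center needed for the dévissage). This is false without an input of the strength of Theorem~\ref{thm_finiteness} --- over $\mathbb{C}$ it was Bernstein who first isolated it as a consequence of his finiteness theorem --- and supplying it in the required generality is precisely what the finiteness results of this paper make possible. Everything else is a careful but essentially routine repackaging of the arguments of Bernstein and Casselman, with attention paid to base change and to the behaviour of admissibility and of the Bernstein center over general coefficient rings.
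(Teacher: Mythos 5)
Your reduction to the admissible case is where the argument breaks. You write that a finitely generated $\pi$ is, block by block, ``genuinely admissible'' by the finiteness theorem; this is false. Theorem \ref{thm_Z-admissible} gives admissibility \emph{over the center} $\ZZz_{R}(G)$ (i.e.\ $V^{H}$ finitely generated over $\ZZz$), which is much weaker than admissibility over $R$: already $\overline{\QQ}_\ell[G/H]$, or Bernstein's block generators $i_P(\pi\otimes\overline{\QQ}_\ell[M/M^0])$, are finitely generated and $\ZZz$-finite but not admissible. So your d\'evissage from admissible $\pi$ (where the Casselman pairing is indeed classical over a field) to arbitrary $\pi$ collapses, and the admissible case cannot carry the statement: second adjointness is a claim about the whole category, and the test objects one is forced to use ($R[G/H]_r$ and its dual) are never admissible. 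The paper never reduces to admissible representations; instead it extracts from $\ZZz$-admissibility the weaker but sufficient notion of $K,P$-\emph{stability} (Fitting-type decomposition of $V^K$ under $T_\lambda$), first for $\Zlb[G/H]_r$ via Lemma \ref{lem_admissible_restriction} and Lemma \ref{lem_stablility} — which also needs the finiteness of $\ZZz_{\Zlb}(M)$ over $\ZZz_{\Zlb}(G)$ (Theorem \ref{thm_bernstein_finiteness}, resting on the dual-side Corollary \ref{cor_parameters_finiteness}) — then for all objects by resolutions, and from stability it proves Jacquet's lemma (Lemma \ref{lem:jacquet}) for \emph{arbitrary} objects, finally obtaining the adjunction by dualizing resolutions by $\ZZ[\frac{1}{p}][G/H]_r^{\vee}$.

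A second genuine gap is the coefficient reduction: a ``Nakayama-type argument'' cannot reduce the statement over $\ZZ[\frac{1}{p}]$ to fields of characteristic $\neq p$. The relevant Hom-modules and the values of $I_P^{\vee}$ and $\delta_P R_{\bar P}$ are not finitely generated $R$-modules, and $\Rep_{\ZZ[\frac{1}{p}]}(G)$ is not assembled from representations over residue fields; indeed the passage from $\Zlb$ (where the finiteness theorems hold) to $\ZZ[\frac{1}{p}]$ is exactly the delicate point the paper flags, handled by showing the constant of stability is independent of $\ell$ (Lemma \ref{lem_uniform_constant}) and resolving arbitrary objects of $\Rep_{\ZZ[\frac{1}{p}]}(G)_r$ by the injectives $I_\ell$, which carry a $\Zlb$-structure (Lemma \ref{lem_resolution}). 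Finally, your step (i) — that $I_P$ preserves finite generation, hence $I_P^{\vee}$ commutes with filtered colimits — is in this paper Corollary \ref{cor_induction_restriction}(1), itself \emph{deduced from} second adjointness, so invoking it without an independent proof risks circularity. The overall shape (Bernstein's strategy, Casselman duality, passage via stability) is the right family of ideas, but the two substitutions you make — ``$\ZZz$-admissible $\Rightarrow$ admissible'' and ``reduce to fields'' — are precisely the points where the real content of the paper's proof lies.
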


Note that when one can fix a square root of $q$ in $R$, the 
 parabolic functors can be normalized by putting $i_{P}:=\I_{P} \delta_P^{\frac{1}{2}}$ and
 $r_{P}:=\delta_{P}^{-\frac 12}.\R_{P}$, and the twist by the modulus character disappears from the 
 statement of Corollary \ref{cor_second_adj}, as in Bernstein's original result. 
 
 It is perhaps surprising that we can deduce second adjointness over $\ZZ[\frac{1}{p}]$ from finiteness results that we can only establish over $\Zlb$ for $\ell \neq p$.  The key point is to deduce a certain ``stability'' property for objects of $\Rep_{\ZZ[\frac{1}{p}]}(G)$; we reduce this problem to establishing ``stability'' for certain injective objects $I_{\ell}$ of $\Rep_{\ZZ[\frac{1}{p}]}(G)$ that naturally have the additional structure of a $\Zlb$-module, allowing us to apply the finiteness results we have proven over $\Zlb$.
 
Let us quote three further consequences, for which only partial results have been known
so far.

\begin{corollary} \label{cor_noetherian}
For any noetherian $\ZZ[\frac{1}{p}]$-algebra $R$, and any compact open subgroup $H$ of $G$, the Hecke algebra $R[H\backslash G / H]$ is noetherian.
\end{corollary}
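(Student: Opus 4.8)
The plan is to deduce Corollary~\ref{cor_noetherian} from the finiteness statements already in hand, namely Theorem~\ref{thm_finiteness} over $\Zlb$-algebras together with Corollary~\ref{cor_second_adj} over $\ZZ[\tfrac1p]$-algebras. First I would treat the case of a noetherian $\Zlb$-algebra $R$ directly: by Theorem~\ref{thm_finiteness}, $R[H\backslash G/H]$ is a finitely generated module over its center $Z$, and $Z$ is a finitely generated $R$-algebra. Since $R$ is noetherian, $Z$ is noetherian by the Hilbert basis theorem; a ring that is a finitely generated module over a noetherian central subring is itself (left and right) noetherian, so $R[H\backslash G/H]$ is noetherian. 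This already disposes of the ``$\ell\neq p$'' half of the statement.

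For a general noetherian $\ZZ[\tfrac1p]$-algebra $R$, I would reduce to the previous case by a localization/completion argument at the various residue characteristics. The point is that $R[H\backslash G/H]$ being noetherian can be checked after a faithfully flat base change, or more precisely one can use that an $R$-algebra $A$ is noetherian once $A\otimes_R R_\ell$ is noetherian for $R_\ell$ ranging over a suitable faithfully flat family — e.g. the $\ell$-adic completions together with a $\QQ$-localization, exploiting that $\ZZ[\tfrac1p]\to \prod_{\ell\neq p}\Zlb$ (completed appropriately) together with $\ZZ[\tfrac1p]\to\QQ$ covers $\Spec\ZZ[\tfrac1p]$. Over $\QQ$-algebras the needed noetherianity follows from Bernstein's classical results (the $R=\overline\QQ_\ell$ case quoted in the text, plus a descent to $\QQ$-algebras); over each $\Zlb$-algebra it follows from the paragraph above. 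Alternatively, and perhaps more cleanly, one invokes Corollary~\ref{cor_second_adj}: second adjointness over $\ZZ[\tfrac1p]$ is precisely the hypothesis under which \cite{datfinitude} proves that $R[H\backslash G/H]$ is noetherian for noetherian $R$, so the corollary is immediate by citing that result.

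I expect the routine part to be the homological algebra (finitely generated module over noetherian center $\Rightarrow$ noetherian), and the only genuine subtlety to be the passage from $\Zlb$-coefficients to arbitrary $\ZZ[\tfrac1p]$-coefficients. If one goes the faithfully-flat route, the obstacle is checking that the chosen family of base changes really detects noetherianity of a (possibly non-commutative, non-finitely-generated-as-ring) Hecke algebra; this is where one wants to restrict attention to the center first — which is commutative and finitely generated — prove noetherianity there by a clean descent, and only then propagate to the whole Hecke algebra via module-finiteness. If one instead cites \cite{datfinitude} via Corollary~\ref{cor_second_adj}, there is essentially no obstacle beyond verifying that the hypotheses of the cited theorem are met by arbitrary noetherian $\ZZ[\tfrac1p]$-algebras, which they are. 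I would present the proof using the second-adjointness route as the main line, with the direct $\Zlb$-argument recorded as the mechanism that makes that route unconditional.
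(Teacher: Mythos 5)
Your main line is exactly the paper's proof: Corollary~\ref{cor_noetherian} is deduced by combining second adjointness (Corollary~\ref{cor_second_adj}) with the result of \cite{datfinitude} (Corollaire 4.4 there) that second adjointness implies noetherianity of $R[H\backslash G/H]$ for noetherian $R$. The auxiliary faithfully-flat descent sketch is unnecessary (and its ascent of noetherianity from a covering family is not obviously valid), but since you correctly discard it in favour of the citation route, the proposal is correct and essentially identical to the paper's argument.
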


Our arguments in this paper fall short of establishing that the rings $R[H \backslash G
/H]$ are finitely generated over their centers when $R$ is not a $\Zlb$-algebra.  We will
address this question in forthcoming work.  Indeed, we expect to be able to prove this,
for $R$ an arbitrary noetherian $\ZZ[\frac{1}{p}]$-algebra $R$, by first proving it over
$\ZZ[\frac 1{pN}]$ for some integer $N$, and then applying flat descent (see Lemma
\ref{lemma_reduc_Zl} (2)) to the map $\ZM[\frac 1p]\To{} \ZM[\frac 1{pN}]\times \prod_{\ell|N}\ZM_{\ell}$.
Here $N$ will be the l.c.m. of the orders of torsion elements in $G$.
In this ``banal'' setting, with the help  of  Corollary~\ref{cor_second_adj}, one can construct
quite explicit projective objects of $\Rep_R(G)$ and thus one has good control over the
center $\ZZz_R(G).$ 

\begin{corollary} \label{cor_induction_restriction} Let $P$ be a parabolic subgroup  with Levi component $M$, and let $R$  be a noetherian $\ZZ[\frac{1}{p}]$-algebra.
  \begin{enumerate}
  \item The parabolic induction functor $\I_{P}$ takes projective, resp. finitely generated, smooth $RM$-modules to
    projective, resp. finitely generated, smooth $RG$-modules.
  \item The Jacquet functor $\R_{P}$ takes admissible $RG$-modules to admissible $RM$-modules.
  \end{enumerate}
\end{corollary}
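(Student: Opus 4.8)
The plan is to deduce both statements formally from the two adjunctions available for $\I_P$ --- ordinary Frobenius reciprocity $\R_P \dashv \I_P$, valid over any $R$, and the second adjointness $\I_P \dashv \delta_P.\R_{\bar P}$ of Corollary~\ref{cor_second_adj} --- together with the noetherianity of Hecke algebras from Corollary~\ref{cor_noetherian}.

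\emph{Part (1).} Since its right adjoint $\delta_P.\R_{\bar P}$ is exact (the Jacquet functor is exact, and twisting by a character changes nothing), $\I_P$ is a left adjoint of an exact functor, hence preserves projectives; symmetrically, $\I_P$ is the right adjoint of the exact functor $\R_P$, so it preserves injectives as well (not needed here). For finite generation I would argue via compact objects. The standard generators $\ind_H^G\mathbf 1$ of $\Rep_R(G)$ are compact, because $\Hom_G(\ind_H^G\mathbf 1,-) = (-)^H$ is exact and commutes with filtered colimits; and by Corollary~\ref{cor_noetherian}, submodules of finitely generated smooth modules are again finitely generated, so in $\Rep_R(G)$ and $\Rep_R(M)$ the notions ``compact'', ``finitely presented'' and ``finitely generated'' all coincide. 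Now $\I_P$ admits the right adjoint $\delta_P.\R_{\bar P}$, which commutes with filtered colimits --- it is, up to a character twist, the coinvariants functor $V \mapsto V_{\bar U}$ --- and a left adjoint whose right adjoint preserves filtered colimits preserves compact objects. Hence $\I_P$ carries finitely generated smooth $RM$-modules to finitely generated smooth $RG$-modules. (Alternatively one can reduce, using exactness of $\I_P$, to showing that $\I_P(\ind_H^M\mathbf 1)$ is finitely generated, and invoke the geometric lemma computing $\R_Q\I_P(\ind_H^M\mathbf 1)$ as a finite iterated extension of compact inductions on Levi subgroups; but the adjunction argument is shorter.)

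\emph{Part (2).} Let $V$ be an admissible $RG$-module and $H' \subseteq M$ a compact open subgroup. As a compact group admits no nontrivial character into $\ZZ[\frac1p]^\times$, the modulus character $\delta_{\bar P}$ is trivial on $H'$, so $(\R_P V)^{H'} = (\delta_{\bar P}.\R_P V)^{H'} = \Hom_M(\ind_{H'}^M\mathbf 1,\, \delta_{\bar P}.\R_P V)$; applying Corollary~\ref{cor_second_adj} to the pair $(\bar P, P)$, this equals $\Hom_G(\I_{\bar P}(\ind_{H'}^M\mathbf 1), V)$. By part (1) the smooth $RG$-module $W := \I_{\bar P}(\ind_{H'}^M\mathbf 1)$ is finitely generated, so a presentation $\bigoplus_{i=1}^n \ind_{K_i}^G\mathbf 1 \twoheadrightarrow W$ embeds $\Hom_G(W,V)$ into $\bigoplus_{i=1}^n V^{K_i}$, a finitely generated $R$-module since $V$ is admissible; as $R$ is noetherian, $(\R_P V)^{H'} = \Hom_G(W,V)$ is finitely generated over $R$. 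Since $H'$ was arbitrary, $\R_P V$ is admissible.

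The substantive inputs are Corollaries~\ref{cor_second_adj} and~\ref{cor_noetherian}; granting those, the corollary is formal, and the only point requiring care is in part~(1): one must note that second adjointness provides a right adjoint for $\I_P$ that is simultaneously exact and finitary, and one genuinely needs Corollary~\ref{cor_noetherian} to pass from ``compact'' to ``finitely generated'' --- absent noetherianity a quotient of a compact object need not be compact, and the argument would only yield preservation of finitely presented objects. The modulus characters are a red herring throughout, being trivial on compact subgroups.
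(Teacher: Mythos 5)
Your argument is correct and is essentially the paper's own route: the paper disposes of this corollary by citing Lemme 4.6 of \cite{datfinitude}, which is precisely this formal deduction from second adjointness (an exact right adjoint for $\I_P$ gives preservation of projectives, a colimit-compatible right adjoint gives preservation of finite generation, and the identification $(\R_P V)^{H'}\cong\Hom_G(\I_{\bar P}(\ind_{H'}^{M}\mathbf 1),V)$ together with part (1) and noetherianity of $R$ gives admissibility of $\R_P V$). Two minor repairs to your justifications: $(-)^H$ need not be exact (only its commutation with filtered colimits is used for compactness of $\ind_H^G\mathbf 1$), and a compact group can perfectly well carry nontrivial $\ZZ[\frac1p]^\times$-valued characters of finite order --- the correct reason $\delta_{\bar P}$ is trivial on compact open subgroups is that it takes values in the torsion-free group $q^{\ZZ}$. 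Note also that your detour through Corollary~\ref{cor_noetherian} is legitimate (it is established from second adjointness independently of this corollary) but avoidable, and slightly under-justified as written, since that corollary is a statement about Hecke algebras and upgrading it to ``subobjects of finitely generated objects are finitely generated'' uses the depth decomposition with its finitely generated projective generators; characterizing finite generation by commutation of $\Hom(V,-)$ with filtered unions of subobjects makes part (1) work with no noetherian input at all.
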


In turn, item (2) above is the main ingredient to prove the next corollary, which was not
known for general groups even when $\ell$ is a banal prime (that is, when $\ell$ is a prime not dividing the pro-order of any compact open subgroup of $G$).

\begin{corollary}\label{cor_integral}
An irreducible $\overline\QQ_{\ell} G$-representation is
  \emph{integral} (i.e. admits an admissible $G$-stable $\overline\ZM_{\ell}$-lattice) if
  and only if its supercuspidal support is integral.
\end{corollary}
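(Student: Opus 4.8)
The statement to prove is Corollary~\ref{cor_integral}: an irreducible $\Ql G$-representation $\pi$ is integral if and only if its supercuspidal support is integral. One direction is elementary: if $\pi$ admits an admissible $\Zl$-stable lattice $\Lambda$, then parabolic restriction (Jacquet) applied to $\Lambda$ produces an admissible $\Zl$-lattice in an appropriate subquotient computing the supercuspidal support, so integrality propagates downward. The substantive direction is the converse. The plan is to start from an irreducible supercuspidal $\Ql M$-representation $\sigma$ that is integral, fix an admissible $\Zl$-stable lattice $\Lambda_\sigma \subset \sigma$, and form the parabolically induced module $\I_P(\Lambda_\sigma) \subset \I_P(\sigma)$. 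By Corollary~\ref{cor_induction_restriction}(1), $\I_P(\Lambda_\sigma)$ is a finitely generated $\Zl G$-module, and it is visibly $G$-stable and $\ell$-torsion-free. The task then reduces to extracting from $\I_P(\sigma)$ an admissible $\Zl$-stable lattice whose generic fiber is the given irreducible constituent $\pi$ with supercuspidal support $(M,\sigma)$.

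\smallskip

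\noindent\textbf{Key steps.} First, I would reduce to the case where $\pi$ is a constituent of $\I_P(\sigma)$ for $(M,\sigma)$ its supercuspidal support, and invoke the block decomposition so that we may work inside a single Bernstein-type component; this lets us replace $\ZZz_{\Zl}(G)$ by the finite-type $\Zl$-algebra $\mathfrak{z}$ governing that component, which is a finitely generated $\Zl$-algebra by Theorem~\ref{thm_finiteness}/Theorem~\ref{thm_Z-admissible}. Second, the point $x \in \Spec(\mathfrak{z} \otimes \Ql)$ corresponding to $\pi$ lies in the closure of a $\Zl$-point: because $\mathfrak{z}$ is a finitely generated $\Zl$-algebra and $\Zl$ is a domain whose fraction field is $\Qlb$, the maximal ideal of $\mathfrak{z}\otimes\Ql$ cutting out $x$ contracts to a prime of $\mathfrak{z}$ lying over $(0)$ in $\Zl$ or over a maximal ideal; after a finite extension of the coefficients we may arrange a $\Zl$-algebra homomorphism $\mathfrak{z} \to \Zl$ specializing $\I_P(\sigma)$ appropriately. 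Third, from the finitely generated admissible $\Zl G$-module $V := \I_P(\Lambda_\sigma)$, one localizes/completes at this $\Zl$-point of the center and then extracts, via the admissibility over the center, a $G$-stable $\Zl$-lattice $\Lambda_\pi$ in $\pi$: concretely, choose a finitely generated $\Zl G$-submodule of $V \otimes_{\mathfrak{z}} (\text{localization})$ surjecting onto $\pi$, saturate it, and use that $V$ being admissible over $\mathfrak{z}$ forces $\Lambda_\pi^H$ to be finitely generated over $\Zl$ for every compact open $H$. Finally, admissibility of $\Lambda_\pi$ is exactly the $\mathfrak{z}$-admissibility of $V$ transported through the (finite) center, which is where Theorem~\ref{thm_Z-admissible} does the real work.

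\smallskip

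\noindent\textbf{Main obstacle.} The crux — and the reason this was open even for banal $\ell$ — is passing from ``finitely generated $\Zl G$-module mapping onto $\pi$'' to ``\emph{admissible} $G$-stable lattice in $\pi$''. A priori a finitely generated $\Zl G$-module need not be admissible, and its $\ell$-adic saturation inside $\pi$ could fail to be admissible; what rescues us is precisely the new input that finitely generated smooth $\Zl G$-modules are $\ZZz$-finite (Theorem~\ref{thm_Z-admissible}), so $V$ is admissible over the finite-type center $\mathfrak{z}$, and admissibility over $\mathfrak{z}$ together with finiteness of $\mathfrak{z}$ over $\Zl$ along the relevant specialization yields genuine $\Zl$-admissibility of the lattice. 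The secondary technical point requiring care is that the supercuspidal support of $\pi$ being integral must be shown to guarantee that $\sigma$ itself (not merely some twist) admits an admissible $\Zl$-lattice — this uses that integrality of a supercuspidal representation is detected on a single compact open subgroup and is stable under the relevant cuspidal-support equivalence — and that the specialization of the center we chose is compatible with $\pi$ rather than some other constituent of $\I_P(\sigma)$, which one arranges by working within the block and tracking the action of $\mathfrak{z}$ on $\I_P(\Lambda_\sigma)$ explicitly.
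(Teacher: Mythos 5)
You have the difficulty inverted. The direction you dismiss as ``elementary'' --- $\pi$ integral $\Rightarrow$ supercuspidal support integral --- is exactly where the new results of the paper are needed. Given a $G$-stable admissible lattice $L_{\pi}\subset\pi$, the Jacquet module $r_{P}(L_{\pi})$ is finitely generated over $\Zl M$ (Iwasawa decomposition), but its \emph{admissibility} over $\Zl$ is not a classical fact: Casselman's proof of Jacquet's theorem uses finite-dimensionality of $V^{K}$ over a field and does not carry over to $\Zl$-lattices. In this paper that admissibility is precisely Corollary \ref{cor_induction_restriction}(2), deduced from second adjointness and hence from the main finiteness theorems; it is one of the statements that was open in general. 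Your proposal asserts this step with no justification, so as written this half of the corollary is not proved. (One also needs, as the paper notes, that $r_{P}(L_{\pi})$ generates $r_{P}(\pi)$, so that it is a lattice, and then uniqueness of cuspidal support to identify $\sigma$ with a constituent of the socle.)

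Conversely, the direction you treat as ``substantive'' is the easy one, and your mechanism for it is both unnecessary and gapped. If $L_{\sigma}$ is an admissible $M$-stable lattice in $\sigma$, then $i_{P}(L_{\sigma})$ is admissible by the classical fact that parabolic induction preserves admissibility over any coefficient ring (compactness of $G/P$); choosing $P$ with $\pi\hookrightarrow i_{P}(\sigma)$, the intersection $i_{P}(L_{\sigma})\cap\pi$ is a $G$-stable, $\ell$-torsion-free, admissible lattice in $\pi$, and one is done --- no Bernstein-center argument is needed, which is exactly how the paper proceeds. Your route through $\ZG$-finiteness has a genuine hole: admissibility of $i_{P}(L_{\sigma})$ over the finite-\emph{type} (not finite) $\Zl$-algebra $\mathfrak{z}$ only yields a $\Zl$-admissible lattice in $\pi$ if the character $\chi_{\pi}\colon\mathfrak{z}\to\Ql$ through which $\mathfrak{z}$ acts on the $\pi$-part takes values in $\Zl$, and your step ``the maximal ideal contracts to a prime \dots\ after a finite extension we may arrange a $\Zl$-algebra homomorphism $\mathfrak{z}\to\Zl$'' does not establish this: a $\Ql$-point of $\Spec\,\mathfrak{z}$ need not be $\Zl$-valued, and its integrality is essentially the statement you are trying to prove. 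This could be repaired using integrality of the central character of $\sigma$ together with the compatibility and finiteness of the maps between centers, but that is far more roundabout than the one-line argument via admissibility of parabolic induction.
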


  \medskip

  Let us outline our strategy to prove Theorems \ref{thm_finiteness}
  and  \ref{thm_Z-admissible}. The details are given in Section \ref{sec:finit-group-side}.

  We first observe that it is enough to prove them in the case where $R$ is
  $\ZM_{\ell}$ or any finite flat extension of $\Zlb$. For several
  reasons, it will be convenient to have a fixed square root of $q$ in
  $R$, so we will work over $\ZM_{\ell}':=\ZM_{\ell}[\sqrt q]$ where $\sqrt q\in\Ql$ is
  fixed, and we will use normalized parabolic functors $i_{P}$ and $r_{P}$.
  

  Since $\ZG$-finite objects are stable under taking finite direct products and quotients, in order  
  to prove Theorem \ref{thm_Z-admissible}, it is enough to prove 
  that each $\Zlb'G$-module of the form $\Zlb'[G/H]$ for some open pro-$p$-subgroup $H$ of
  $G$ is $\ZG$-finite. For such an $H$, using Bernstein's decomposition over
  $\overline\QQ_{\ell}$, we show that $\Zlb'[G/H]$ can
  be embedded in a $\ZM_{\ell}'G$-module of the form $V_{H}=\bigoplus_{P}i_{P}(W_{P})$,
  where $P=U_{P}M_{P}$ runs through a finite set of parabolic subgroups and $W_{P}$ is a 
  \emph{cuspidal} finitely   generated $\ZM_{\ell}' M_{P}$-module. Since $\ZG$-finite
  objects are stable under taking subobjects, it is enough to prove that
  $V_{H}$ is $\ZZz$-finite.

  So we are left with
  proving that any $V$ of the form $i_{P}(W)$ for some finitely generated \emph{cuspidal}
  $\ZM_{\ell}'M$-module $W$, is $\ZZz$-finite
  (even though, at
  this point, it is not even clear whether such a $V$ is finitely generated).  
  Now, by \cite[Lemme 4.2]{datfinitude}, we know that such a $W$ is
  admissible over
  $\ZZz_{\ZM_{\ell}'}(Z_{M})$, where $Z_{M}$ denotes the
  maximal central torus of $M$.

  \medskip

  This is where Fargues and Scholze's local version of V. Lafforgue's theory of
  excursion operators comes in. Denote by $\HG$ the dual
  pinned reductive group scheme of $\GG$ over $\ZM[\frac 1p]$, endowed with its pinning-preserving
  action of the Weil group $W_{F}$ of $F$.
  Recall the subgroup $W_{F}^{0}$ of $W_{F}$ obtained in \cite{DHKM} by ``discretizing tame inertia'',
   and choose a separated decreasing filtration
  $(P_{F}^{e})_{e\in\NM}$ of the wild inertia subgroup $P_{F}\subset W_{F}$ by normal
  subgroups.
  The ``excursion'' $\ZM[\frac 1p]$-algebra
  $$\Exc(W_{F},\HG) :=  {\rm lim}_{e} \Exc(W_{F}^{0}/P_{F}^{e},\HG) \,\hbox{ where }\,
  \Exc(W_{F}^{0}/P_{F}^{e},\HG):= {\rm colim}_{n,F_{n}\rightarrow W_{F}^{0}/P_{F^{e}}} 
  \mathcal{O}(Z^{1}(F_{n},\HG))^{\HG}$$
  can be thought of as the ring of functions on the space of ``$\HG$-valued
  continuous pseudo-characters'' of $W_{F}^{0}$.  Moreover, each
  $\Exc(W_{F}^{0}/P_{F}^{e},\HG)$ is known to be a finitely generated
  (commutative) $\ZM[\frac 1p]$-algebra.
  Using Hecke operators on spaces of $G$-bundles on the Fargues-Fontaine curve, Fargues
  and Scholze have constructed in \cite[Ch. IX]{FarguesScholze}  a map of
  $\Zlb'$-algebras $${\rm FS}_{G}:\, \Exc(W_{F},\hat\G)_{\Zlb'}\longrightarrow\ZZz_{\Zlb'}(G)$$
 that enjoys the following important properties 
  (details and references are given in section \ref{subsec_proof_main}) :

  \begin{enumerate}
   \item Compatibility with parabolic induction : for any parabolic subgroup $P$ with Levi
     component $M$ and any $\ZM_{\ell}'M$-module $W$, the
     following diagram is commutative
     $$\xymatrix{
       \Exc(W_{F},\HG)_{\Zlb'} \ar[r]^{{\rm FS}_{G}} \ar[d]
       & {\rm End}_{\ZM_{\ell}'G}(i_{P}W)
       \\
       \Exc(W_{F},\HM)_{\Zlb'} \ar[r]^{{\rm FS}_{M}} &
       {\rm End}_{\ZM_{\ell}'M}(W) \ar[u]_{i_{P}}
     }$$
     where the left vertical map is
     given by pushforward of
     $1$-cocycles along $\HM\hookrightarrow\HG$.
   \item Compatibility with central characters : upon identifing $\widehat{Z_{M}}$ with
     $\HM_{\rm ab}$, the following diagram commutes :
     $$\xymatrix{ \Exc(W_{F},\HM)_{\Zlb'} \ar[r]^-{{\rm FS}_{M}} & \ZZz_{\Zlb'}(M) \\
       \Exc(W_{F},\HM_{\rm ab})_{\Zlb'} \ar[r]^-{{\rm FS}_{Z_{M}}} \ar[u] & \ZZz_{\Zlb'}(Z_{M}) \ar[u]}.$$
   \item Isomorphism for tori : the map ${\rm FS}_{Z_{M}}:\,\Exc(W_{F},\HM_{\rm ab})_{\Zlb'} \To{}
     \ZZz_{\Zlb'}(Z_{M})$ is an isomorphism.
   \item Continuity : for any finitely generated $\Zlb'M$-module $W$, the map $\Exc(W_{F},\HM)_{\Zlb'}\To{}{\rm
      End}_{\ZM_{\ell}'M}(W)$ factors over $\Exc(W_{F}^{0}/P_{F}^{e},\HM)_{\Zlb'}$ for some $e$.

   \end{enumerate}

Let us return to our representation $V=i_{P}(W)$ with $W$ a finitely generated cuspidal
$\Zlb' M$-module.
Since $W$ is $\ZZz_{\ZM_{\ell}'}(Z_{M})$-admissible, the last three properties
above show that $W$ is admissible over $\Exc(W_{F}^{0}/P_{F}^{e},\HM)_{\Zlb'}$ for some
$e\in\NM$. Since parabolic induction preserves admissibility, property (1) above
shows that, in order to prove that $V$ is $\ZZz$-finite (and thus conclude the proof of
Theorem \ref{thm_Z-admissible}), it suffices to prove that the map
$\Exc(W_{F}^{0}/P_{F}^{e},\HG)\To{}\Exc(W_{F}^{0}/P_{F}^{e},\HM)$ is finite.

\medskip

This leads us to our results on the dual side. Instead of excursion algebras, we will
first focus on the moduli space of Langlands parameters $Z^{1}(W_{F}^{0}/P_{F}^{e},\HG)$
over $\ZM[\frac 1p]$, as defined in \cite{DHKM} and its GIT quotient.
Let us pick a lift of Frobenius $\Fr$ in $W_{F}^{0}$ and let $\bar\Fr$ denote its image in $\rm{Aut}(\hat{G})$.
Our main result in this context is :
\begin{theorem}\label{thm_parameters_main}
  For any $e\geq 0$, the map $Z^{1}(W_{F}^{0}/P_{F}^{e},\HG)\To{}\HG$, $\varphi\mapsto
  \varphi(\Fr)$ induces a finite morphism $Z^{1}(W_{F}^{0}/P_{F}^{e},\HG)\sslash
  \HG\To{}\HG\rtimes\bar\Fr\sslash\HG$.
\end{theorem}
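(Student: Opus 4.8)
The plan is to reduce the statement to a finiteness result for the map on cocycle spaces $Z^1(W_F^0/P_F^e,\hat G)\to\hat G\rtimes\bar\Fr$ given by $\varphi\mapsto\varphi(\Fr)$, and then to pass to GIT quotients by the conjugation action of $\hat G$. The first step is to recall from \cite{DHKM} that $Z^1(W_F^0/P_F^e,\hat G)$ is a scheme of finite type over $\ZM[\frac1p]$, flat, with reduced and complete-intersection geometric fibers, and that the $\hat G$-action is such that the GIT quotient $Z^1(W_F^0/P_F^e,\hat G)\sslash\hat G$ exists and is of finite type; likewise $\hat G\rtimes\bar\Fr\sslash\hat G$ (conjugation action, which is just the adjoint quotient of the connected component $\hat G\rtimes\bar\Fr$ of $\hat G\rtimes\langle\bar\Fr\rangle$) is of finite type. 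So the content is \emph{properness/finiteness of the fibers}, after which finiteness follows because a morphism of finite-type $\ZM[\frac1p]$-schemes that is quasi-finite and proper — equivalently, affine with finite fibers — is finite; and both sides being affine, it is enough to establish that the morphism is integral, or equivalently that it is quasi-finite and universally closed.

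The key step is to understand the fiber of $Z^1(W_F^0/P_F^e,\hat G)\to\hat G$ over a point $g\in\hat G$ (thought of as $\varphi(\Fr)=g$). Because $W_F^0/P_F^e$ is, by the ``discretization of tame inertia'' construction, topologically finitely generated — indeed an extension of the free group generated by $\Fr$ (up to the tame generator relation) by a finite $p$-group — a cocycle is determined by finitely many values subject to finitely many relations; fixing $\varphi(\Fr)$, the remaining data is a cocycle on the compact-by-tame part $I_F^0/P_F^e$, which is an extension of a procyclic (tame) group by a finite group, with a twisted $\Fr$-equivariance constraint. The crucial point will be that, fixing $g$, the space of such cocycles is \emph{finite} over the base: the tame generator $\tau$ must satisfy $\varphi(\Fr)\varphi(\tau)\varphi(\Fr)^{-1}=\varphi(\tau)^q$ (up to the action of $\bar\Fr$), i.e.\ $\varphi(\tau)$ lies in a subscheme of $\hat G$ cut out by the equation $x\mapsto x^q$ being conjugate-by-$g$ to $x$; since $\ell\neq p$, $q$ is invertible and this is a quasi-finite condition on semisimple parts, with the unipotent part of $\varphi(\tau)$ then forced into the (finite) set of $\ell'$-unipotent... — more precisely, the standard argument (as in the tame Langlands parameter literature) shows $\varphi(\tau)$ ranges over a scheme quasi-finite over $\hat G$, and the finitely many wild generators, valued in a finite $p$-group's worth of twisted classes inside $\hat G$, again contribute quasi-finitely. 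This gives quasi-finiteness of $Z^1(W_F^0/P_F^e,\hat G)\to\hat G$ onto $\hat G\rtimes\bar\Fr$ after accounting for the shift, hence quasi-finiteness after $\sslash\hat G$.

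For properness I would argue by the valuative criterion, using that $Z^1(W_F^0/P_F^e,\hat G)$ is affine over the space of values $(\varphi(\Fr),\varphi(\tau),\dots)$ and that the defining equations (the cocycle relations and the finitely many group relations of $W_F^0/P_F^e$) are \emph{integral} over $\mathcal O(\hat G)$ in the $\varphi(\Fr)$-variable — concretely, $\varphi(\tau)$ satisfies a monic equation over $\ZM[\frac1p][\hat G\rtimes\bar\Fr]$ coming from $\varphi(\tau)^q=$ conjugate of $\varphi(\tau)$, and similarly for the wild generators using that $P_F^e$ has bounded exponent so $\varphi$ of any wild element is a root of unity times identity in a suitable sense, forcing integrality. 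Then integrality is inherited by the coordinate ring of the GIT quotient, since $\mathcal O(Z^1\sslash\hat G)=\mathcal O(Z^1)^{\hat G}$ and invariants of an integral extension are integral over the invariants of the base. Passing through the GIT quotient is harmless here because $\hat G$ is (geometrically) reductive over $\ZM[\frac1p]$ after inverting the relevant bad primes, and the relevant finite-generation of invariants is part of the \cite{DHKM} package; alternatively one reduces the whole statement to a statement over $\overline{\ZM}[\frac1p]$-points and uses that a morphism of affine schemes of finite type which is surjective on (closed) points of all residue fields and has finite fibers, with one side integral over the other, is finite.

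\medskip
\noindent\textbf{Main obstacle.} The delicate point is the uniform integrality/properness in the \emph{wild} direction and over $\ZM[\frac1p]$ (all residue characteristics $\ell\neq p$ at once): one must ensure that the finitely many wild-generator values $\varphi(P_F^e)$ are constrained to lie in a \emph{finite, and moreover integral-over-$\mathcal O(\hat G)$}, subscheme, rather than merely a finite-type one, uniformly in $e$-dependence having been fixed but without losing flatness over $\ZM[\frac1p]$; handling the unipotent/non-semisimple locus of $\varphi(\tau)$ (where the naive ``$x^q$ conjugate to $x$'' equation degenerates) and controlling it via the Frobenius-twist equivariance is where the real work lies. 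I expect this to be dispatched using the structure theory of $Z^1(W_F^0/P_F^e,\hat G)$ from \cite{DHKM} (in particular flatness and the description of its fibers), reducing to the already-understood tame case and a bounded-exponent argument for the wild part.
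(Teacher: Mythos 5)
The central claim of your proposal --- that, once $\varphi(\Fr)$ is fixed, the remaining cocycle data (the value on a tame generator and on the wild part) varies in a scheme quasi-finite, indeed integral, over the base, so that $Z^{1}(W_{F}^{0}/P_{F}^{e},\HG)\To{}\HG\rtimes\bar\Fr$ is quasi-finite and $\mathcal{O}(Z^{1})$ is integral over $\mathcal{O}(\HG\rtimes\bar\Fr)$ --- is false. Already in the split tame case the fiber over $\varphi(\Fr)=1$ is $\{\sigma\in\HG:\sigma^{q-1}=1\}$, a finite union of semisimple conjugacy classes, hence positive-dimensional whenever $\HG$ is not a torus: the equation $g\sigma g^{-1}=\sigma^{q}$ pins down only finitely many possibilities for the semisimple \emph{conjugacy class} of $\sigma$, not for $\sigma$ itself. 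Consequently the coordinates of $\varphi(\tau)$ satisfy no monic equation over $\mathcal{O}(\HG\rtimes\bar\Fr)$ (integrality plus finite type would force finite fibers), so the valuative-criterion/integrality step collapses, and ``quasi-finiteness before $\sslash\HG$'' cannot be the route; the statement is genuinely one about invariant rings, and becomes true only after dividing the source by conjugation. The same objection applies to your treatment of the wild generators: bounded exponent of $P_{F}^{e}$ bounds orders, but the values still sweep out positive-dimensional conjugacy classes.

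What is missing is a mechanism for putting closed $\HG$-orbits of cocycles into a normal form of bounded size, which is where the actual work lies. The paper's proof does this by: (i) splitting off the wild part via the decomposition of $Z^{1}(W_{F}^{0}/P_{F}^{e},\HG)\sslash\HG$ indexed by wild data $(\phi,\tilde\varphi)$ from \cite{DHKM}, reducing to a tame, Borel-pair-preserving action on $C_{\HG}(\phi)^{\circ}$; (ii) in the tame case, invoking Richardson's theorem (complete reducibility of closed orbits) to conjugate any closed-orbit cocycle so that $\varphi(s)\in\HT^{s,0}\rtimes s$ and $\varphi(\Fr)\in N_{s}\rtimes\Fr$, producing a closed subscheme $A\subset\HT^{s,0}\times N_{s}$ of the cocycle space that meets every closed orbit; (iii) proving $A$ is finite over $N_{s}$ by an isogeny argument on the relative torus $\HT^{s,0}\times N_{s}$ --- only after this reduction does the relation $n\Fr(t)n^{-1}t^{-q}=s^{q}(n)n^{-1}$ become an honestly finite condition; and (iv) a twisted Chevalley--Steinberg restriction theorem (together with a torsor-triviality lemma over $\bar\ZZ[\frac1N]$) to compare $N_{s}\rtimes\Fr\sslash\HT^{s,0}$ with $\HG\rtimes\bar\Fr\sslash\HG$, plus reducedness and finite generation of the invariant ring to upgrade integral to finite. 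Steps (ii)--(iii), or some substitute that controls closed orbits rather than all cocycles, are exactly what your outline lacks, and without them the ``quasi-finite condition on semisimple parts'' you appeal to is not available.
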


From this result, we will easily deduce the following corollary.

\begin{corollary}\label{cor_parameters_finiteness}
  Suppose $\HH\subset\HG$ is a closed reductive subgroup scheme of $\HG$ stable under
  $W_{F}$. Then
  \begin{enumerate}
  \item the natural morphism
    $Z^{1}(W_{F}^{0}/P_{F}^{e},\HH)\sslash\HH\To{}Z^{1}(W_{F}^{0}/P_{F}^{e},\HG)\sslash\HG$
    is finite.
  \item the natural morphism of algebras
  $\Exc(W_{F}^{0}/P_{F}^{e},\HG)_{\rm red} \To{} \Exc(W_{F}^{0}/P_{F}^{e},\HH)_{\rm red}$ is finite.
  \end{enumerate}
\end{corollary}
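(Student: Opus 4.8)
The plan is to deduce both statements from Theorem \ref{thm_parameters_main} applied simultaneously to $\HG$ and to $\HH$, together with the obvious functoriality of the construction $\varphi \mapsto \varphi(\Fr)$. First I would fix a common lift of Frobenius $\Fr \in W_F^0$; since $\HH$ is $W_F$-stable, its image $\bar\Fr_{\HH}$ in $\mathrm{Aut}(\hat H)$ is the restriction of $\bar\Fr$, and one gets a commuting square whose horizontal arrows are the ``evaluate at $\Fr$'' maps
\[
\xymatrix{
Z^{1}(W_{F}^{0}/P_{F}^{e},\HH)\sslash\HH \ar[r] \ar[d] & \HH\rtimes\bar\Fr\sslash\HH \ar[d] \\
Z^{1}(W_{F}^{0}/P_{F}^{e},\HG)\sslash\HG \ar[r] & \HG\rtimes\bar\Fr\sslash\HG
}
\]
By Theorem \ref{thm_parameters_main} (applied to each of $\HH$ and $\HG$), the two horizontal maps are finite. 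So to prove (1) it suffices to show the right vertical map, i.e.\ $\HH\rtimes\bar\Fr\sslash\HH \To{} \HG\rtimes\bar\Fr\sslash\HG$, is finite; then the composite $Z^{1}(W_{F}^{0}/P_{F}^{e},\HH)\sslash\HH \To{} \HG\rtimes\bar\Fr\sslash\HG$ is finite, and since $Z^{1}(W_{F}^{0}/P_{F}^{e},\HG)\sslash\HG \To{} \HG\rtimes\bar\Fr\sslash\HG$ is finite (in particular affine and separated), a standard cancellation argument for finite morphisms — finiteness of $g\circ f$ and separatedness of $g$ forces finiteness of $f$ once $f$ is known to be of finite type — yields finiteness of the left vertical map. (Here $Z^1(W_F^0/P_F^e,\HH)\sslash\HH \to Z^1(W_F^0/P_F^e,\HG)\sslash\HG$ is of finite type because both sides are of finite type over $\ZZ[\frac1p]$, the GIT quotients of the affine finite-type schemes of cocycles constructed in \cite{DHKM}.)

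The remaining point is thus the finiteness of $\HH\rtimes\bar\Fr\sslash\HH \To{} \HG\rtimes\bar\Fr\sslash\HG$. I would argue this by a Chevalley-type restriction / integrality argument: both sides are affine GIT quotients of smooth affine schemes by reductive group schemes over $\ZZ[\frac1p]$, so it is enough to check that the coordinate ring $\mathcal O(\HH\rtimes\bar\Fr)^{\HH}$ is a finite module over the image of $\mathcal O(\HG\rtimes\bar\Fr)^{\HG}$. Over a field this is the statement that a $\bar\Fr_{\HH}$-twisted class function on $\HH$ is integral over the restrictions of $\bar\Fr$-twisted class functions on $\HG$, which follows from the fact that $\HH\rtimes\bar\Fr \hookrightarrow \HG\rtimes\bar\Fr$ is a closed immersion of affine schemes of finite type and both quotients are of finite type, combined with the observation that the fibers of $\HH\rtimes\bar\Fr\sslash\HH \to \HG\rtimes\bar\Fr\sslash\HG$ are finite — two twisted-semisimple elements of $\HH$ that are $\HG$-conjugate lie in finitely many $\HH$-conjugacy classes, by a twisted analogue of the finiteness of the set of $H$-orbits in a $G$-orbit of semisimple elements (a Luna-type or Galois-cohomology argument, e.g.\ via $H^1$ of centralizers). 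A morphism of finite type between affine schemes of finite type over a Noetherian base that is quasi-finite and, say, proper — or for which one directly checks properness via the valuative criterion using that twisted-semisimple orbits are closed — is finite. Alternatively, and perhaps more cleanly over $\ZZ[\frac1p]$, one reduces to the absolute case $\HH\sslash\HH \to \HG\sslash\HG$ (untwisted), which is classically finite, and bootstraps to the twisted case using that $\HG\rtimes\bar\Fr$ is a connected component of $\HG\rtimes\langle\bar\Fr\rangle$ for a suitable finite-order or pro-cyclic extension and that semisimplification in the twisted group is controlled by the untwisted one.

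For part (2), I would transport (1) through the relationship between excursion algebras and coarse moduli of parameters. By the very definition recalled in the excerpt, $\Exc(W_F^0/P_F^e,\HG) = \mathrm{colim}_{n}\, \mathcal O(Z^1(F_n,\HG))^{\HG}$, and there is a canonical map $\Exc(W_F^0/P_F^e,\HG) \to \mathcal O(Z^1(W_F^0/P_F^e,\HG))^{\HG} = \mathcal O(Z^1(W_F^0/P_F^e,\HG)\sslash\HG)$ which, after passing to reduced rings, is an isomorphism — this is part of the main structural results on parameter spaces from \cite{DHKM} (the excursion algebra ``is'' the ring of functions on the coarse moduli space, up to nilpotents). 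Granting this identification on the reduced level, $\Exc(W_F^0/P_F^e,\HG)_{\rm red} \to \Exc(W_F^0/P_F^e,\HH)_{\rm red}$ is identified with $\mathcal O(Z^1(W_F^0/P_F^e,\HG)\sslash\HG)_{\rm red} \to \mathcal O(Z^1(W_F^0/P_F^e,\HH)\sslash\HH)_{\rm red}$, which is finite because the morphism of schemes in (1) is finite (finiteness of a morphism of Noetherian schemes is equivalent to module-finiteness of the corresponding ring map, and is preserved by passing to reduced subschemes since $\mathcal O(X)_{\rm red}$ is a finite $\mathcal O(X)$-module when $X$ is Noetherian). I expect the main obstacle to be the twisted Chevalley restriction step in part (1) — controlling $\HH\rtimes\bar\Fr\sslash\HH \to \HG\rtimes\bar\Fr\sslash\HG$ integrally over $\ZZ[\frac1p]$ rather than over a field — since the fiber-finiteness of twisted conjugacy classes and the closedness of twisted-semisimple orbits need to be handled with some care in the relative (non-reductive-over-a-field) setting; everything else is formal manipulation of finite morphisms and the dictionary between excursion algebras and coarse moduli spaces.
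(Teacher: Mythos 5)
Your reduction for part (1) is structurally the same as the paper's: the same commutative square, Theorem \ref{thm_parameters_main} (in the form of Theorem \ref{thm_parameters_side}, which applies to $\HH$ since the $W_F$-action on it is finite) for the horizontal arrow, and the affine cancellation argument. But the step you yourself flag as the ``main obstacle'' is precisely the content that cannot be waved through: the finiteness of $\HH\rtimes\bar\Fr\sslash\HH\To{}\HG\rtimes\bar\Fr\sslash\HG$ over $R=\bar\ZZ[\tfrac 1N]$ is the paper's Lemma \ref{lemma_Ch_St}, and your sketched justifications do not amount to a proof. Finiteness of fibres (finitely many $\HH$-classes inside a $\HG$-class of twisted semisimple elements) only gives quasi-finiteness, and the properness you would need on top of it is essentially equivalent to the finiteness being proved; ``closedness of twisted-semisimple orbits'' does not feed into the valuative criterion for the map of GIT quotients, and the ``bootstrap from the untwisted case'' is not spelled out. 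The paper proves this lemma by a genuine argument: after adjusting $g$ so that ${\rm Int}_g\circ\theta$ fixes a pinning (possible because the relevant torsor over $\Spec R$ is trivial, Lemma \ref{lemma_tech_R}), the twisted Chevalley--Steinberg theorem reduces the source to the fixed subtorus $S=(T_{\HH})^{g\theta,0}$, and an $n$-th power trick plus passage to the Levi $C_{\HG}(S)$ reduces the target side to an untwisted quotient; finite generation of the invariant rings is what upgrades integrality to finiteness. None of these non-field-base difficulties (non-noetherian $R$, non-linear-reductivity, pinning/torsor issues) are handled in your sketch.

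For part (2) your route also relies on a claim that is stronger than what the references provide: you assert that $\Exc(W_F^0/P_F^e,\HG)_{\rm red}\To{}\OC(Z^1(W_F^0/P_F^e,\HG))^{\HG}$ is an \emph{isomorphism}. What is actually available (and what the paper uses) is only that this map is a bijection on points valued in algebraically closed fields, hence an \emph{embedding} of the reduced excursion algebra into the (reduced) invariant ring; surjectivity onto the full invariant ring over $\ZZ[\tfrac 1p]$ is not established, since taking $\HG$-invariants is not exact in this setting. The paper's argument instead compares both algebras with $\Exc(\langle\Fr\rangle,\HG)\cong\OC(\HG\rtimes\Fr)^{\HG}$: Theorem \ref{thm_parameters_side} makes $\OC(Z^1)^{\HG}$ finite over this ring, the embedding plus a descent of the whole square to a finitely generated (hence noetherian) subring $R'\subset R$ shows $\Exc(W_F^0/P_F^e,\HG)_{\rm red}$ is finite over $\Exc(\langle\Fr\rangle,\HG)$, and then the same two-out-of-three diagram argument as in part (1), with Lemma \ref{lemma_Ch_St} giving finiteness of $\Exc(\langle\Fr\rangle,\HG)\To{}\Exc(\langle\Fr\rangle,\HH)$, yields the statement. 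So part (2) should be rewritten along these lines (or you must supply a proof of the isomorphism you invoke, which is not in \cite{DHKM}); note also that the noetherian-descent step is genuinely needed because $R$ itself is not noetherian, an issue your proposal does not address.
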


In (2) above, the subscript red denotes the reduced quotient. Since
$\ZZz_{\ZM_{\ell}'}(G)$ is a subring of $\ZZz_{\Ql}(G)$, it 
is reduced, by Bernstein's description. Hence the Fargues-Scholze morphism ${\rm FS}_{G}$ factors through the reduced
excursion algebra anyway, and this version of finiteness is sufficient to finish the proof
of Theorems \ref{thm_Z-admissible} and \ref{thm_finiteness}.

We also mention the following corollary of Theorem
\ref{thm_parameters_main}, which may have independent interest.

\begin{corollary}
  Let $F'$ be a finite extension of $F$ and put $P_{F'}^{e}:=P_{F}^{e}\cap W_{F'}$. Then
  the restriction morphism
  $Z^{1}(W_{F}^{0}/P_{F}^{e},\HG)\sslash\HG
  \To{}Z^{1}(W_{F'}^{0}/P_{F'}^{e},\HG)\sslash\HG$ is finite.
\end{corollary}

These results are proved in Section \ref{sec:finit-param-side}.  The proofs of the
finiteness results on the group side are given in Section \ref{sec:finit-group-side}.

\section{Finiteness on the parameters side}
\label{sec:finit-param-side}

In this section, $R$ denotes a ring of the form $\bar\ZZ[\frac 1N]$, where $N$ is any positive integer. Recall that any reductive
group scheme over $R$ is split (Prop A.13 of \cite{DHKM}). For a diagonalizable group scheme $D$
over $R$, we denote by $D^{0}$ its maximal subtorus (which may not be the maximal
connected subgroup scheme).
The following technical lemma makes working over $R$ a bit more convenient than working
over finite extensions of $\ZM[\frac 1N]$.
\begin{lemma}\label{lemma_tech_R}
  Let $D$ be a diagonalizable group scheme over $R$ acting on an $R$-scheme $X$, and let
  $X\To{\varphi}Y$ be a relative $D$-torsor for the fppf topology. Then the map $X(R)\To{}Y(R)$ is surjective.
\end{lemma}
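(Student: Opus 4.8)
The plan is to exploit that $D$ is diagonalizable over $R = \bar\ZZ[\frac1N]$ and that torsors under diagonalizable groups are controlled by $H^1_{\mathrm{fppf}}$, which for diagonalizable groups agrees with $H^1_{\mathrm{\acute et}}$ (by a theorem of Grothendieck, since $D$ is smooth when it is a torus, and more generally the fppf and \'etale cohomology of a diagonalizable group over a base where its order is invertible coincide; over $R$ all torsion primes dividing $|D/D^0|$ are invertible). Given $y \in Y(R)$, the fiber $X_y := X \times_Y \Spec R$ is a $D_R$-torsor over $\Spec R$ for the fppf topology, hence an \'etale $D_R$-torsor, and I want to show $X_y(R) \neq \emptyset$, i.e. that this torsor is trivial. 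So the heart of the matter is: \emph{every \'etale $D$-torsor over $\Spec R$ is trivial}, equivalently $H^1_{\mathrm{\acute et}}(\Spec R, D) = 0$ for every diagonalizable group scheme $D$ over $R = \bar\ZZ[\frac1N]$.

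The key steps are then as follows. First, write $D$ as an extension (or at least filter it) using its character group: a diagonalizable group scheme over $R$ is $D = \Spec R[\Lambda]$ for a finitely generated abelian group $\Lambda$, and decomposing $\Lambda$ into a free part and cyclic torsion parts reduces, via the long exact cohomology sequence, to the two cases $D = \GG_m$ and $D = \mu_n$ with $n \mid N^\infty$ (so $n$ invertible in $R$). For $D = \GG_m$, I use $H^1_{\mathrm{\acute et}}(\Spec R, \GG_m) = \mathrm{Pic}(R)$, and $R = \bar\ZZ[\frac1N]$ is a filtered colimit of rings of $S$-integers in number fields inside $\bar\QQ$; the class group of such a ring, under the transition maps, dies in the colimit because every ideal class becomes principal in a suitable finite extension (the Hilbert class field, which is again a number field sitting inside $\bar\QQ$). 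Hence $\mathrm{Pic}(\bar\ZZ[\frac1N]) = 0$. For $D = \mu_n$ with $n$ invertible in $R$, the Kummer sequence $1 \to \mu_n \to \GG_m \to \GG_m \to 1$ gives $H^1_{\mathrm{\acute et}}(\Spec R, \mu_n) = R^\times/(R^\times)^n$ modulo the contribution of $\mathrm{Pic}$, which we have just shown vanishes; and $R^\times = \bar\ZZ[\frac1N]^\times$ is $n$-divisible since $\bar\ZZ$ contains all roots and $n$-th roots of its units (again because $\bar\QQ$ is algebraically closed and the relevant extensions stay inside $\bar\QQ$, with $n$ prime to $N$ so no ramification issue at the inverted primes). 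Therefore $H^1_{\mathrm{\acute et}}(\Spec R, \mu_n) = 0$, and combining the two cases, $H^1_{\mathrm{\acute et}}(\Spec R, D) = 0$ for all diagonalizable $D$ over $R$.

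Finally, to descend from $\Spec R$ to the relative statement: since $X \to Y$ is a relative $D$-torsor for the fppf topology, pulling back along $y\colon \Spec R \to Y$ produces an fppf $D_R$-torsor over $\Spec R$; by the above this is trivial, so it has an $R$-point, which is precisely a point of $X(R)$ lifting $y$. Thus $X(R) \to Y(R)$ is surjective. The main obstacle I anticipate is making the fppf-versus-\'etale comparison and the $\mathrm{Pic}$/unit computations for $\bar\ZZ[\frac1N]$ clean — in particular checking that one may genuinely reduce to the \'etale site for $\mu_n$ with $n$ invertible, and assembling the colimit argument for $\mathrm{Pic}$ and units without circularity; everything else is formal manipulation of long exact sequences.
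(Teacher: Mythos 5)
Your overall architecture coincides with the paper's: pull back along $y$ to get a $D$-torsor over $\Spec(R)$, decompose $D$ via its character group into $\GG_m$ and $\mu_n$ factors, kill $\mathrm{Pic}(\bar\ZZ[\frac1N])$ using Hilbert class fields in the colimit over number fields, and use divisibility of units for the finite factors. The genuine gap is the fppf-to-\'etale reduction: you assert that ``over $R$ all torsion primes dividing $|D/D^0|$ are invertible'' and accordingly treat only $\mu_n$ with $n\mid N^\infty$. Nothing in the hypotheses gives this: $D$ is an \emph{arbitrary} diagonalizable group scheme over $R=\bar\ZZ[\frac1N]$, and a prime $q\nmid N$ is not invertible in $R$ (if $1/q=a/N^k$ with $a\in\bar\ZZ$, then $N^k/q$ would be a rational algebraic integer, forcing $q\mid N$). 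This generality is actually needed where the lemma is used in the paper --- torsors under $Z(\HH)$ for a reductive subgroup $\HH$, under the kernel of the isogeny $Z(M)^{0}\times M'\To{}M$, and under kernels of maps $\HT\To{}\HT_{s^{q}}$ --- whose finite parts are groups $\mu_m$ with $m$ unrelated to $N$ (for instance $\mu_2$ when $N$ is odd). For such $\mu_m$ the fppf/\'etale comparison you invoke is precisely what fails in general, so your argument as written proves only a special case of the statement.

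The repair is to drop the \'etale reduction and work directly with fppf cohomology, which is what the paper does: $H^1_{\mathrm{fppf}}(S,\GG_m)=\mathrm{Pic}(R)$ holds by Hilbert 90/Grothendieck since $\GG_m$ is smooth, and the Kummer sequence $1\to\mu_m\to\GG_m\to\GG_m\to1$ is exact as fppf sheaves for \emph{every} $m$, invertible in $R$ or not. The resulting exact sequence $R^{\times}\To{(-)^{m}}R^{\times}\To{}H^1_{\mathrm{fppf}}(S,\mu_m)\To{}\mathrm{Pic}(R)$, combined with your two computations --- both of which are correct and need no coprimality hypothesis, since any $m$-th root of a unit of $\bar\ZZ[\frac1N]$ already lies in $\bar\ZZ[\frac1N]$ because that ring is integrally closed in $\bar\QQ$ (so your worry about ramification at the inverted primes is unnecessary) --- gives $H^1_{\mathrm{fppf}}(S,\mu_m)=0$ for all $m$, and the remainder of your argument then goes through verbatim.
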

\begin{proof}
  The fiber over a point $y\in Y(R)$ is a $D$-torsor over $S:=\Spec(R)$. So we need to show
  that any $D$-torsor over $S$ is trivial. Writing the character group  $X^{*}(D)$ as
  a product of cyclic abelian groups, we may assume that $D$ is  $\GG_{m}$ or $\mu_{m}$
  for some $m\in\NM^{*}$. In the case $D=\GG_{m}$, we
  have $H^{1}_{fppf}(S,\GG_{m})={\rm Pic}(S)={\rm colim}_{K\subset\overline\QQ}\,
  {\rm Pic}(\OC_{K}[\frac 1N])=\{1\}$ since, for any number fields $K\subset K'$, the map
  ${\rm Pic}(\OC_{K}[\frac 1N])\To{}{\rm Pic}(\OC_{K'}[\frac 1N])$ is trivial whenever
  $K'$ contains the Hilbert class field of $K$. In the case $D=\mu_{m}$, the exact
  sequence
  $ R^{\times}\To{(-)^{m}} R^{\times}\To{} H^{1}_{fppf}(S,\mu_{m})\To{}{\rm
    Pic}(R)\To{(-)^{m}}{\rm Pic}(R)$ and the surjectivity of $R^{\times}\To{(-)^{m}} R^{\times}$
  show that $H^{1}_{fppf}(S,\mu_{m})=\{1\}$ as desired.
\end{proof}

\begin{lemma} \label{lemma_Ch_St}
  Let $\HG$ be a reductive group scheme over $R$ and $\theta$ an automorphism of $\HG$ with
  finite order. Let $\HH$ be a reductive subgroup scheme of $\HG$ over $R$, and suppose it is
  ${\rm Int}_{g}\circ \theta$-stable for some $g\in \HG(R)$. Then the canonical morphism
  $ \HH g\rtimes\theta\sslash \HH\To{} \HG\rtimes\theta\sslash \HG$
  is finite.
\end{lemma}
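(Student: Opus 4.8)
The plan is to reduce the statement to a concrete affine GIT problem and then invoke finiteness of integral extensions. Write $\hat G\rtimes\langle\theta\rangle$ for the semidirect product, so that $\hat G\rtimes\theta$ is a connected component of this (possibly disconnected) affine algebraic group over $R$; similarly form $\hat H\rtimes\langle\theta|_{\hat H}\rangle$ using that $\hat H$ is ${\rm Int}_g\circ\theta$-stable. The coset $\hat H g\rtimes\theta$ inside $\hat G\rtimes\theta$ is exactly the image of the $\hat H$-equivariant (for the conjugation action) map $\hat H\rtimes\theta\to\hat G\rtimes\theta$ sending $h\rtimes\theta\mapsto hg\rtimes\theta$, after translating by $g$; the point is that this is a closed immersion of $\hat H$-varieties, $\hat H$ acting by conjugation on both sides. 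Passing to GIT quotients, the map $\hat H g\rtimes\theta\sslash\hat H\to\hat G\rtimes\theta\sslash\hat G$ is the composite of $\hat H g\rtimes\theta\sslash\hat H\to \hat G\rtimes\theta\sslash\hat H$ (induced by a closed immersion, hence a closed immersion on quotients) with $\hat G\rtimes\theta\sslash\hat H\to\hat G\rtimes\theta\sslash\hat G$. So it suffices to prove the second map is finite, i.e. that for a reductive subgroup scheme $\hat H\subset\hat G$ over $R$ and any affine $\hat G$-scheme $X$ of finite type, the map $X\sslash\hat H\to X\sslash\hat G$ is finite.

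This last statement is the crux, and it is the standard fact that for a reductive subgroup $\hat H$ of a reductive group $\hat G$, the ring extension $\mathcal O(X)^{\hat G}\subset\mathcal O(X)^{\hat H}$ is finite whenever $\mathcal O(X)$ is a finitely generated $\hat G$-algebra. Over a field of characteristic zero this is classical (reductivity gives a Reynolds operator, and one runs the Hilbert-style argument: $\mathcal O(X)^{\hat H}$ is a finitely generated algebra, integral over $\mathcal O(X)^{\hat G}$ because $\hat G/\hat H$ is affine, hence module-finite). Over our base $R=\bar{\mathbb Z}[\frac1N]$, one needs a characteristic-free input: here $\hat H$ and $\hat G$ are reductive group schemes over $R$, hence geometrically reductive (Seshadri, Haboush), $X\sslash\hat H$ and $X\sslash\hat G$ are of finite type over $R$ (finite generation of invariants for geometrically reductive group schemes acting on finite-type algebras, Seshadri/Thomason), and $\hat G/\hat H$ is affine over $R$ (Richardson's criterion, valid for reductive subgroup schemes; one may also reduce to this over $\mathbb Z$ and base change). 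Affineness of $\hat G/\hat H$ gives that $\mathcal O(X)\otimes_{\mathcal O(X)^{\hat G}}\mathcal O(\hat G/\hat H)$-type arguments show $\mathcal O(X)^{\hat H}$ is integral over $\mathcal O(X)^{\hat G}$; combined with $\mathcal O(X)^{\hat H}$ being a finitely generated $R$-algebra, hence a finitely generated $\mathcal O(X)^{\hat G}$-algebra, integrality upgrades to module-finiteness. This yields finiteness of $X\sslash\hat H\to X\sslash\hat G$.

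I would organize the write-up in three steps. First, set up the semidirect products and observe $\hat H g\rtimes\theta$ is a closed $\hat H$-stable subscheme of $\hat G\rtimes\theta$ with $\hat H$ acting by conjugation; deduce that it suffices to show $\hat G\rtimes\theta\sslash\hat H\to\hat G\rtimes\theta\sslash\hat G$ is finite (using that a closed immersion of affine $\hat H$-schemes induces a closed immersion, in particular a finite morphism, on GIT quotients, which holds because taking $\hat H$-invariants is exact on the relevant short exact sequences of $R$-flat modules — or by reducing the flatness issue to each geometric fiber). Second, reduce the general finiteness assertion for $X\sslash\hat H\to X\sslash\hat G$ to the two facts: (a) $\hat G/\hat H$ is affine over $R$; (b) $\mathcal O(X)^{\hat H}$ is a finitely generated $R$-algebra. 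Third, prove integrality from (a): cover $\hat G$ by a section of $\hat G\to\hat G/\hat H$ étale-locally, or more simply use that $\mathcal O(X)^{\hat H}=(\mathcal O(X)\otimes_R\mathcal O(\hat G/\hat H))^{\hat G}$ embeds into an $\mathcal O(X)^{\hat G}$-algebra finite over $\mathcal O(X)^{\hat G}$ after base change along the affine $\hat G/\hat H$, forcing each element of $\mathcal O(X)^{\hat H}$ to satisfy a monic polynomial over $\mathcal O(X)^{\hat G}$.

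The main obstacle is entirely in the characteristic-free GIT: over $R=\bar{\mathbb Z}[\frac1N]$ one cannot use a Reynolds operator or semisimplicity of representations, so one must carefully cite or reprove (i) finite generation of invariants of a reductive group scheme acting on a finite-type $R$-algebra, (ii) affineness of the quotient $\hat G/\hat H$ for a reductive subgroup scheme (this is where one really uses that $\hat H$ is reductive, not merely flat), and (iii) that GIT quotients commute with the closed immersion $\hat H g\rtimes\theta\hookrightarrow\hat G\rtimes\theta$ — the only subtlety here being possible failure of exactness of invariants in bad residue characteristics, which one sidesteps by checking the conclusion fibrewise over $\Spec R$ and invoking that finiteness can be checked after faithfully flat base change to the residue fields together with a Nakayama/flatness argument for the generic fiber. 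Everything else is formal manipulation of GIT quotients and semidirect products. Modulo these inputs, which are all available in the literature (e.g. Seshadri, Thomason, Alper) or in the references already cited, the proof is short.
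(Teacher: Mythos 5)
Your reduction fails at its central step: the claim that for a reductive subgroup scheme $\HH\subset\HG$ and \emph{any} affine $\HG$-scheme $X$ of finite type the morphism $X\sslash\HH\To{}X\sslash\HG$ is finite is false, and affineness of $\HG/\HH$ does not give integrality of $\OC(X)^{\HH}$ over $\OC(X)^{\HG}$. Take $\HG=\SL_2$ acting on $X=\SL_2$ by conjugation and $\HH=\HT$ the diagonal torus (reductive, with $\HG/\HH$ affine). Writing a matrix as $\bigl(\begin{smallmatrix} a & b\\ c & d\end{smallmatrix}\bigr)$, the $\HT$-invariants are generated by $a,d,bc$ with $bc=ad-1$, so $\OC(X)^{\HH}\cong k[a,d]$ has dimension $2$, while $\OC(X)^{\HG}=k[a+d]$ has dimension $1$; the element $a$ is transcendental over $k[a+d]$, so the map $X\sslash\HH\To{}X\sslash\HG$ is not even quasi-finite. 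Consequently your factorization $\HH g\rtimes\theta\sslash\HH\To{}\HG\rtimes\theta\sslash\HH\To{}\HG\rtimes\theta\sslash\HG$ cannot prove the lemma: already for $\theta=1$, $g=1$, $\HH=\HT$ the second arrow is exactly the non-finite map above, even though the composite (essentially the Chevalley restriction $\HT\To{}\HT\sslash W$) \emph{is} finite. The truth of the lemma depends essentially on the source being the small coset $\HH g\rtimes\theta$, and no general principle about passing from $\HH$-invariants to $\HG$-invariants of an ambient $\HG$-scheme can substitute for using that structure.

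For comparison, the paper's proof exploits the coset directly: after modifying $g$ by an element of $\HH(R)$ (possible because torsors under diagonalizable groups over $R$ are trivial, Lemma \ref{lemma_tech_R}) one arranges that ${\rm Int}_g\circ\theta$ preserves a pinning of $\HH$; the twisted Chevalley--Steinberg theorem then gives a finite surjection from a translate $Sg\rtimes\theta$ of the fixed subtorus $S=(T_{\HH})^{g\theta,0}$ onto $\HH g\rtimes\theta\sslash\HH$, the $n$-th power map ($n$ the order of $\theta$) reduces the problem to finiteness of $Sg_n\To{}\HG\sslash\HG$ with $g_n=g\theta(g)\cdots\theta^{n-1}(g)$, and this is checked inside the Levi $M=C_{\HG}(S)$ via the isogeny $Z(M)^0\times M'\To{}M$ and another application of Chevalley--Steinberg. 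A secondary, smaller issue with your write-up: over $R=\bar\ZZ[\frac 1N]$ the groups are not linearly reductive, so a closed $\HH$-stable immersion induces in general only an integral morphism of GIT quotients rather than a closed immersion; the paper sidesteps such exactness questions by working with reduced invariant rings and finite surjections. But the decisive problem is the false finiteness claim for $X\sslash\HH\To{}X\sslash\HG$.
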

\begin{proof}
  After maybe multiplying $g$ by some element $h\in \HH(R)$ on the left, we may assume that ${\rm
    Int}_{g}\circ\theta$ stabilizes a pinning $(T_{\HH},B_{\HH},X_{\HH})$ in $\HH$ ; indeed, the set
  of such $h$ is the set of $R$-points of a $Z(\HH)$-torsor, hence it is not empty
  by the above lemma.  Then, the
  ``twisted version'' of the Chevalley-Steinberg theorem
  (see   e.g. Prop 6.6 in \cite{DHKM}) implies that the inclusion $T_{\HH}\subset \HH$ induces a finite and
  surjective morphism $(T_{\HH})_{g\theta}\To{} \HH g\rtimes\theta\sslash
  \HH$, where $(T_{\HH})_{g\theta}$ denotes the co-invariants of $T_{\HH}$ under  ${\rm
    Int}_{g}\circ\theta$.
    Denoting by
  $S:=(T_{\HH})^{g\theta,0}$ the maximal subtorus of $T_{\HH}$ that is fixed under ${\rm
    Int}_{g}\circ\theta$, it follows that the inclusion $S\subset \HH$ also
  induces a finite surjective morphism $Sg\rtimes \theta\To{} \HH g\rtimes\theta\sslash \HH$.
  In particular, the ring $\mathcal{O}(\HH g\rtimes\theta)^{\HH}$, which is reduced,  embeds in
  $\mathcal{O}(Sg\rtimes\theta)$.

  Therefore, if we can prove that $\mathcal{O}(Sg\rtimes\theta)$ is finite
  over $\mathcal{O}(\HG\rtimes\theta)^{\HG}$, we get that  $\mathcal{O}(\HH g\rtimes\theta)^{\HH}$
  is integral over $\mathcal{O}(\HG\rtimes\theta)^{\HG}$, hence also finite since
  $\mathcal{O}(\HH g\rtimes\theta)^{\HH}$ is a finitely generated $R$-algebra.
 So we are left with proving that the map
  $ S g\rtimes \theta\To{} \HG\rtimes \theta\sslash \HG$ is finite.
  Let $n$ be the order of $\theta$, and consider the diagram
$$\xymatrix{
     S g\rtimes \theta\ar[r] \ar[d]_{(-)^{n}} & \HG\rtimes \theta\sslash \HG  \ar[d]^{(-)^{n}}
     \\
     S g_{n} \ar[r] & \HG \sslash \HG
}$$
Here the horizontal maps are the natural ones, and the vertical maps are induced by
raising to the power $n$ and we have put $g_{n}:=g\theta(g)\cdots\theta^{n-1}(g)$.
Note that, since $S$ is centralized by ${\rm Int}_{g}\circ\theta$, the left vertical map
is given by $sg\rtimes\theta\mapsto s^{n}g_{n}$. In particular it is a finite morphism,
hence the finiteness of the top horizontal map will follow if we can prove finiteness of the
bottom horizontal map.

Denote by $M:=C_{\HG}(S)$ the centralizer of $S$ in $\HG$, which is a Levi subgroup. Then we
have $g_{n}\in M(R)$, whence a factorization $Sg_{n}\To{} M\sslash M
\To{} \HG\sslash \HG$. The  second map is finite by the
Chevalley-Steinberg theorem. In order to prove that the first map is
finite, consider the isogeny $Z(M)^{0}\times M'\To{} M$ where
$Z(M)^{0}$ denotes the maximal central torus in $M$ and $M'$ the derived subgroup.
Another application of the
Chevalley-Steinberg theorem shows that the morphism
$(Z(M)^{0}\times M')\sslash M= Z(M)^{0}\times M'\sslash M\To{} M\sslash M$ is finite.
Thanks to the above lemma, we may write $g_{n}=zm'$ with $z\in Z(M)^{0}(R)$ and $m'\in
M'(R)$. Then we get a factorization $$ Sg_{n}\To{} Z(M)^{0}\times\{m'\} \To{} Z(M)^{0}\times M'\sslash M\To{}  M\sslash M$$
where the first two maps are closed immersions and the last one is finite. This implies
that $Sg_{n}\To{}\HG\sslash \HG$ is finite and completes the proof.
\end{proof}

We now fix a prime $p$ and assume further that $p|N$ and that the reductive group scheme $\HG$ over
$R=\bar\ZZ[\frac 1N]$ is endowed with a finite action of the Weil group $W_{F}$ of a local
field $F$ of residue characteristic $p$.
As usual, we denote by $\Fr$ a lift of Frobenius, and by $\bar\Fr$ its image in ${\rm Aut}(\HG)$.

\begin{theorem} \label{thm_parameters_side}
  For any $e\geq 0$, the map $Z^{1}(W_{F}^{0}/P_{F}^{e},\HG)\To{}\HG$, $\varphi\mapsto
  \varphi(\Fr)$ induces a finite morphism $Z^{1}(W_{F}^{0}/P_{F}^{e},\HG)\sslash
  \HG\To{}\HG\rtimes\bar\Fr\sslash\HG$.
\end{theorem}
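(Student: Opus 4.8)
The plan is to reduce to the situation already handled in Lemma~\ref{lemma_Ch_St}, and the main work will be to find, after a suitable choice of normalizations, a reductive subgroup of $\HG$ through which the relevant cocycles factor. First I would set up the evaluation-at-Frobenius map carefully: a $1$-cocycle $\varphi\in Z^{1}(W_{F}^{0}/P_{F}^{e},\HG)$ is determined by its restriction to the (topologically finitely generated, by the discretization of tame inertia in \cite{DHKM}) quotient $W_{F}^{0}/P_{F}^{e}$, and $\varphi(\Fr)$ lies in the coset $\HG\rtimes\bar\Fr$ once we twist by the Frobenius action. The map $\varphi\mapsto\varphi(\Fr)$ is $\HG$-equivariant for the conjugation actions on both sides, so it descends to a morphism of GIT quotients $Z^{1}(W_{F}^{0}/P_{F}^{e},\HG)\sslash\HG\To{}\HG\rtimes\bar\Fr\sslash\HG$; the content is that this is a \emph{finite} morphism, i.e. that $\mathcal{O}(Z^{1})^{\HG}$ is a finite module over the image of $\mathcal{O}(\HG\rtimes\bar\Fr)^{\HG}$.

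The key structural input is that $W_{F}^{0}/P_{F}^{e}$ is built from a copy of $\ZZ$ (generated by $\Fr$) acting on a finitely generated group coming from tame and bounded wild inertia, so that once the value $\varphi(\Fr)=:g\rtimes\bar\Fr$ is fixed, the remaining freedom in $\varphi$ is governed by a group scheme that is a \emph{bounded} extension — in particular, the restriction of $\varphi$ to the torsion-prime-to-$p$ part of inertia forces $\varphi$ to take values in a fixed reductive subgroup scheme $\HH_{\varphi}\subset\HG$, namely the (reductive, since we are over $R=\bar\ZZ[\tfrac1N]$ with $N$ divisible by enough primes) centralizer of a finite-order element, and this $\HH_{\varphi}$ is ${\rm Int}_{g}\circ\bar\Fr$-stable. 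The technical point is that there are only finitely many such subgroups up to $\HG$-conjugacy and up to the action, so one can stratify $Z^{1}\sslash\HG$ into finitely many locally closed pieces, on each of which the cocycle is controlled by $Z^{1}$ of the smaller curve-like group into a fixed $\HH$, with $\varphi(\Fr)$ landing in $\HH g\rtimes\bar\Fr$; here Lemma~\ref{lemma_tech_R} guarantees that after left-multiplying $g$ by an element of $\HH(R)$ we may pin down $\HH$ and apply Lemma~\ref{lemma_Ch_St} to get that $\mathcal{O}(\HH g\rtimes\bar\Fr)^{\HH}$ is finite over $\mathcal{O}(\HG\rtimes\bar\Fr)^{\HG}$.

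I would then argue by induction on $e$ (and, inside the induction step, on the length of the wild filtration quotient $P_{F}^{0}/P_{F}^{e}$), the base case $e=0$ being the tame case where $W_{F}^{0}/P_{F}^{0}$ is (a discretized version of) $\ZZ\ltimes\ZZ'$ and a direct computation with the two generators — a generator $s$ of tame inertia of order prime to $p$, and $\Fr$, subject to $\Fr s\Fr^{-1}=s^{q}$ — shows $Z^{1}$ is a fiber product of finite type over $\HG$ built from fixed-point subschemes, so that evaluation at $\Fr$ is finite by a direct application of Lemma~\ref{lemma_Ch_St} applied to $\HH=C_{\HG}(\varphi(s))$. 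For the inductive step I would filter: the fibers of $Z^{1}(W_{F}^{0}/P_{F}^{e},\HG)\To{}Z^{1}(W_{F}^{0}/P_{F}^{e-1},\HG)$ are torsors (fppf, hence by Lemma~\ref{lemma_tech_R} with enough sections) under an affine \emph{unipotent}-by-bounded group, which contributes nothing new to finiteness because $Z^{1}$ of a finite $p$-group with values in $\HG$ over $\bar\ZZ[\tfrac1p]$ is finite over $\HG$ (the relevant cohomology is killed by a power of $p$, which is a unit — this is where $p|N$ is used). The hardest part, and the place I expect the real difficulty, is precisely controlling these fibers uniformly: one must check that the ``bounded wild inertia'' contribution is genuinely reductive-subgroup-bounded in a way compatible with the $\Fr$-action, so that the finitely-many-strata argument goes through and each stratum reduces cleanly to Lemma~\ref{lemma_Ch_St}; managing the $\bar\Fr$-twist and the choice of pinning coherently across strata (via Lemma~\ref{lemma_tech_R}) is the delicate bookkeeping on which the whole proof turns.
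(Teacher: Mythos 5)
Your overall skeleton agrees with the paper in spirit: the wild part is disposed of by passing to a reductive (twisted) centralizer --- the paper does this in one stroke by invoking the decomposition of $Z^{1}(W_{F}^{0}/P_{F}^{e},\HG)\sslash\HG$ from Proposition 1.2 of \cite{DHKM} as a \emph{disjoint union} of quotients $Z^{1}_{\rm Ad_{\tilde\varphi}}(W_{F}^{0}/P_{F},C_{\HG}(\phi)^{\circ})\sslash C_{\HG}(\phi)_{\tilde\varphi}$, together with Lemma~\ref{lemma_Ch_St} --- and the problem is reduced to the tame case with a finite pinning-preserving action. Two remarks on your reduction: a stratification into \emph{locally closed} pieces on each of which the map is finite does not by itself yield finiteness of the morphism of affine GIT quotients (the paper's decomposition is into open-closed summands, which is what makes the patching trivial), and your induction on $e$ via ``$Z^{1}$ of a finite $p$-group is finite over $\HG$'' is not a correct statement as written; neither of these is needed once one quotes the \cite{DHKM} decomposition.

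The genuine gap is in the tame case, which you dismiss as ``a direct application of Lemma~\ref{lemma_Ch_St} applied to $\HH=C_{\HG}(\varphi(s))$''. That lemma requires a \emph{fixed} reductive subgroup scheme $\HH\subset\HG$ over $R$, stable under ${\rm Int}_{g}\circ\theta$; but $C_{\HG}(\varphi(s))$ varies with the point $\varphi$, need not be a reductive group scheme over $R$, and pointwise statements on geometric fibers do not give finiteness of $\OC(Z^{1})^{\HG}$ over $\OC(\HG\rtimes\bar\Fr)^{\HG}$. The paper's actual argument replaces this by a scheme-level construction: after fixing the Borel pair $(\HB,\HT)$ stabilized by the action, it introduces the subgroup scheme $N_{s}\subset N_{\HG}(\HT)$ and the closed subscheme $A\subset\HT^{s,0}\times N_{s}$ of pairs $(t,n)$ satisfying the cocycle relation, proves $A\to N_{s}$ is finite by exhibiting $A$ as a torsor under the kernel of a torus endo-isogeny, proves finiteness of $N_{s}\rtimes\Fr\sslash\HT^{s,0}\to\HG\rtimes\Fr\sslash\HG$ by writing $N_{s}$ as finitely many $\HT^{s,0}$-cosets (Lemmas~\ref{lemma_ex_seq} and~\ref{lemma_tech_R}) and applying Lemma~\ref{lemma_Ch_St} with $\HH=\HT^{s,0}$, and --- the step your proposal has no substitute for --- shows via Richardson's theorem and complete reducibility that \emph{every closed $\HG$-orbit} in $Z^{1}(W_{F}^{0}/P_{F},\HG)$ meets $A$ (Lemma~\ref{lemmaAfinoverN}), so that the map $A\sslash\HT^{s,0}\to Z^{1}\sslash\HG$ is dominant; only then, using reducedness of $\OC(Z^{1})^{\HG}$ to embed it into $\OC(A)^{\HT^{s,0}}$ and finite generation to upgrade ``integral'' to ``finite'', does one conclude. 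Without an argument that closed orbits can be conjugated into a fixed torus-normalizer configuration compatible with the $\Fr$-twist, and without the passage from that pointwise statement to an inclusion of invariant rings, the tame case remains unproven in your sketch.
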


Before giving the proof, let us draw some consequences.

\begin{corollary} \label{cor_parameters_side}
  Suppose $\HH\subset\HG$ is a closed reductive subgroup scheme of $\HG$ stable under
  $W_{F}$. Then the natural morphism
  $Z^{1}(W_{F}^{0}/P_{F}^{e},\HH)\sslash\HH\To{}Z^{1}(W_{F}^{0}/P_{F}^{e},\HG)\sslash\HG$ is finite.
\end{corollary}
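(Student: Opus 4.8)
The plan is to deduce Corollary \ref{cor_parameters_side} from Theorem \ref{thm_parameters_side} by forming a commutative square comparing the Frobenius-evaluation maps for $\HH$ and for $\HG$. Concretely, there is an obvious commutative diagram
\[
\xymatrix{
Z^{1}(W_{F}^{0}/P_{F}^{e},\HH)\sslash\HH \ar[r] \ar[d] & \HH\rtimes\bar\Fr\sslash\HH \ar[d]
\\
Z^{1}(W_{F}^{0}/P_{F}^{e},\HG)\sslash\HG \ar[r] & \HG\rtimes\bar\Fr\sslash\HG
}
\]
in which the horizontal maps are the finite morphisms furnished by Theorem \ref{thm_parameters_side} (applied to $\HH$ with its $W_F$-action, and to $\HG$), the left vertical map is the one whose finiteness we want, and the right vertical map is induced by the inclusion $\HH\hookrightarrow\HG$. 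Since the composite ``left vertical followed by bottom horizontal'' equals ``top horizontal followed by right vertical'', and the top horizontal map is finite, it suffices to show the right vertical map $\HH\rtimes\bar\Fr\sslash\HH\To{}\HG\rtimes\bar\Fr\sslash\HG$ is finite; then the composite is finite, the bottom horizontal map is finite hence (being a morphism of finite-type $R$-schemes) separated, and a standard argument shows the left vertical map is finite because its source is finite over the target of a finite map. Actually the cleanest formulation: finiteness can be checked after a finite surjective base change, or one uses that if $f$ and $g\circ f$ are such that $g\circ f$ is finite and $g$ is separated of finite type then $f$ is finite — but here it is simpler still, since we will directly get integrality.

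So the real content is finiteness of $\HH\rtimes\bar\Fr\sslash\HH\To{}\HG\rtimes\bar\Fr\sslash\HG$, and this is exactly Lemma \ref{lemma_Ch_St} applied with $\theta=\bar\Fr$, $g=1$: indeed $\HH$ is reductive, $\bar\Fr$ has finite order on $\HG$ (we are in the setting where the $W_F$-action on $\HG$ factors through a finite quotient), and $\HH$ is $\bar\Fr$-stable by hypothesis since it is $W_F$-stable. Lemma \ref{lemma_Ch_St} then gives that $\HH\cdot 1\rtimes\bar\Fr\sslash\HH\To{}\HG\rtimes\bar\Fr\sslash\HG$ is finite, which is precisely the right vertical arrow. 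Combining, $\mathcal{O}(Z^{1}(W_{F}^{0}/P_{F}^{e},\HH)\sslash\HH)$ is an algebra which is finite over the image of $\mathcal{O}(\HG\rtimes\bar\Fr\sslash\HG)$ (via the top horizontal and right vertical maps, both finite), hence finite over $\mathcal{O}(Z^{1}(W_{F}^{0}/P_{F}^{e},\HG)\sslash\HG)$ after we know the bottom horizontal map is injective enough — but in fact we only need: the left vertical map's coordinate ring is generated by elements integral over $\mathcal{O}(\HG\rtimes\bar\Fr\sslash\HG)$, and since these elements lie in $\mathcal{O}(Z^{1}(W_{F}^{0}/P_{F}^{e},\HG)\sslash\HG)\subset$ their own ring, they are a fortiori integral over the subring $\mathcal{O}(Z^{1}(W_{F}^{0}/P_{F}^{e},\HG)\sslash\HG)$; as this ring is a finitely generated $R$-algebra, integral plus finite type gives finite.

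The only point requiring a little care — and the one I would flag as the main (though minor) obstacle — is the passage from ``the outer composite and one leg are finite'' to ``the remaining leg is finite''. This is not automatic for arbitrary morphisms, but it works here because all schemes in sight are of finite type over the Noetherian ring $R$ and the relevant coordinate rings are finitely generated $R$-algebras, so one reduces to the algebraic statement: if $A\To{} B\To{} C$ are maps of finitely generated $R$-algebras with $A\To{} C$ module-finite, then $B\To{} C$ is module-finite (clear) and, more to the point, if instead $A\To{} C$ factors as $A\To{} B\To{} C$ with $A\To{} C$ finite and we want $A'\To{} B$ finite for some intermediate $A\To{} A'\To{} B$, we invoke that $B$ is integral over $A$ hence over $A'$, and $B$ is a finitely generated $A'$-algebra, hence finite over $A'$. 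I would spell this out in one sentence and then conclude. One should also note, for item (2) of Corollary \ref{cor_parameters_finiteness} in the main-results section, that passing to reduced quotients is harmless since a finite morphism stays finite after taking reduced subschemes, and the excursion algebras are the global sections of the structure sheaves of these GIT quotients by the very definitions recalled in the introduction.
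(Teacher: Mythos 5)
Your proposal is correct and follows essentially the same route as the paper's proof: the same commutative square, Theorem \ref{thm_parameters_side} applied to $\HH$ with its induced finite $W_{F}$-action, Lemma \ref{lemma_Ch_St} with $\theta=\bar\Fr$ and $g=1$ for the finiteness of $\HH\rtimes\bar\Fr\sslash\HH\To{}\HG\rtimes\bar\Fr\sslash\HG$, and then the standard cancellation (integrality over the composite plus finite type over $R$), a step the paper leaves implicit and you rightly spell out. Your closing aside about reduced excursion algebras concerns Corollary \ref{cor_parameters_finiteness}(2) rather than the present statement, and the paper's argument there is a bit more delicate than ``by definition,'' but this does not affect the proof at hand.
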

\begin{proof}
  Consider the commutative diagram
  $$\xymatrix{Z^{1}(W_{F}^{0}/P_{F}^{e},\HG)\sslash\HG \ar[r] &
    \HG\rtimes\bar\Fr\sslash\HG \\
    Z^{1}(W_{F}^{0}/P_{F}^{e},\HH)\sslash\HH  \ar[r] \ar[u] &
    \HH\rtimes\bar\Fr\sslash\HH \ar[u] 
  }.$$
  The theorem says that the bottom horizontal map is finite, and Lemma \ref{lemma_Ch_St} says that the
  right vertical map is finite. It follows that the left vertical map is finite.
\end{proof}

Similarly, a $W_{F}$-equivariant isogeny $\HG\To{}\HG'$ induces a finite
morphism $Z^{1}(W_{F}^{0}/P_{F}^{e},\HG)\sslash\HG\To{}Z^{1}(W_{F}^{0}/P_{F}^{e},\HG')\sslash\HG'$.
Now, let us change $F$ instead of $\HG$.

\begin{corollary}
  Let $F'$ be a finite extension of $F$ and put $P_{F'}^{e}:=P_{F}^{e}\cap W_{F'}$. Then
  the restriction morphism
  $Z^{1}(W_{F}^{0}/P_{F}^{e},\HG)\sslash\HG
  \To{}Z^{1}(W_{F'}^{0}/P_{F'}^{e},\HG)\sslash\HG$ is finite.
\end{corollary}
\begin{proof} Let us choose the Frobenius lifts $\Fr\in W_{F}$ and $\Fr'\in W_{F'}$ such that $\Fr'=\Fr^{f}$,
  and consider the commutative diagram
  $$\xymatrix{Z^{1}(W_{F'}^{0}/P_{F'}^{e},\HG)\sslash\HG \ar[r] &
    \HG\rtimes\bar\Fr'\sslash\HG \\
    Z^{1}(W_{F}^{0}/P_{F}^{e},\HG)\sslash\HG  \ar[r] \ar[u]^{\rm restr.} &
    \HG\rtimes\bar\Fr\sslash\HG \ar[u]_{(-)^{f}} 
  }.$$
  Again the desired finiteness of the left vertical map  follows from  the finiteness of
  the bottom horizontal, provided by the theorem, and the finiteness of the right vertical
  map, which follows from the twisted version of the Chevalley-Steinberg map, since
  raising to power $f$ on a torus is a finite isogeny.
\end{proof}

\def\OC{\mathcal{O}}

Finally, we can also prove a  statement similar to that of
Theorem~\ref{thm_parameters_side} for the excursion
algebra
 $$\Exc(W_{F}^{0}/P_{F^{e}},\HG) = {\rm colim}_{n,F_{n}\rightarrow W_{F}^{0}/P_{F}^{e}}
  \mathcal{O}(Z^{1}(F_{n},\HG))^{\hat G}$$
  from Definition
  VIII.3.4 of \cite{FarguesScholze}. Indeed, we have a commutative diagram
  $$\xymatrix{
     \Exc(W_{F}^{0}/P_{F^{e}},\HG) \ar[r] & \OC(Z^{1}(W_{F}^{0}/P_{F}^{e},\HG))^{\HG} \\
\Exc(\langle\Fr\rangle,\HG) \ar[r]^-{\simeq} \ar[u] &  \OC(Z^{1}(\langle\Fr\rangle,\HG))^{\HG} =\OC(\HG\rtimes\Fr)^{\HG} \ar[u] 
  }$$
where the horizontal maps follow from the construction of the excursion algebra, and the
vertical maps are induced by restriction to the free abelian subgroup of
$W_{F}^{0}/P_{F}^{e}$ generated by $\Fr$. Note that, as with any free group, the bottom
horizontal map is an isomorphism. Theorem \ref{thm_parameters_side} tells us that the right vertical map is
finite. On the other hand, we know that the top horizontal map induces bijections on
$L$-valued points for any algebraically closed field $R$-algebra. In particular, this map
induces an embedding of the  \emph{reduced} excursion algebra
$\Exc(W_{F}^{0}/P_{F^{e}},\HG)_{\rm red}$ in
$\OC(Z^{1}(W_{F}^{0}/P_{F}^{e},\HG))^{\HG}$ (which was proved to be reduced in \cite{DHKM}).
Observe also that the
whole commutative square above is base-changed from the same square over some
finitely generated subring $R'\subset R$. Over such a subring,
the finitely generated $R'$-algebra  $ \Exc(\langle\Fr\rangle,\HG)$ is noetherian, so
we infer that the map $\Exc(\langle\Fr\rangle,\HG) \To{}
\Exc(W_{F}^{0}/P_{F^{e}},\HG)_{\rm red}$ is finite.

The following interesting corollary is  then proved like Corollary \ref{cor_parameters_side}.
\begin{corollary}
  Suppose $\HH\subset\HG$ is a closed reductive subgroup scheme of $\HG$ stable under
  $W_{F}$. Then the natural morphism of algebras
  $\Exc(W_{F}^{0}/P_{F}^{e},\HG)_{\rm red} \To{} \Exc(W_{F}^{0}/P_{F}^{e},\HH)_{\rm red}$ is finite.  
\end{corollary}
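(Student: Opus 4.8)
The plan is to mirror exactly the proof of Corollary \ref{cor_parameters_side}, but with the commutative diagram that was just established above the statement. Concretely, I would write down the square
$$\xymatrix{
  \Exc(W_{F}^{0}/P_{F}^{e},\HG)_{\rm red} \ar[r] & \Exc(W_{F}^{0}/P_{F}^{e},\HH)_{\rm red} \\
  \Exc(\langle\Fr\rangle,\HG) \ar[r] \ar[u] & \Exc(\langle\Fr\rangle,\HH) \ar[u]
}$$
in which the horizontal maps are restriction of pseudo-characters along $\HH\hookrightarrow\HG$ (functoriality of the excursion algebra construction) and the vertical maps come from restriction along the free cyclic subgroup generated by $\Fr$. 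The square commutes because both composites are induced by the same pair of group maps. The discussion preceding the statement shows that the left vertical arrow $\Exc(\langle\Fr\rangle,\HG)\To{}\Exc(W_{F}^{0}/P_{F}^{e},\HG)_{\rm red}$ is finite; applying the same discussion verbatim with $\HG$ replaced by $\HH$ (which is legitimate since $\HH$ is itself a reductive group scheme over $R$ with a finite $W_{F}$-action, so Theorem \ref{thm_parameters_side} applies to it) shows that the right vertical arrow $\Exc(\langle\Fr\rangle,\HH)\To{}\Exc(W_{F}^{0}/P_{F}^{e},\HH)_{\rm red}$ is finite.

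Next I would observe that the bottom horizontal map $\Exc(\langle\Fr\rangle,\HG)\To{}\Exc(\langle\Fr\rangle,\HH)$ is finite: under the identifications $\Exc(\langle\Fr\rangle,\HG)\simeq\OC(\HG\rtimes\Fr)^{\HG}$ and $\Exc(\langle\Fr\rangle,\HH)\simeq\OC(\HH\rtimes\Fr)^{\HH}$ (valid because $\langle\Fr\rangle$ is free), this map is precisely the comorphism of $\HH\rtimes\bar\Fr\sslash\HH\To{}\HG\rtimes\bar\Fr\sslash\HG$, which is finite by Lemma \ref{lemma_Ch_St} applied with $\theta=\bar\Fr$ and $g=1$ (here $\HH$ is $\bar\Fr$-stable since it is $W_{F}$-stable). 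Composing, the map $\Exc(\langle\Fr\rangle,\HG)\To{}\Exc(W_{F}^{0}/P_{F}^{e},\HH)_{\rm red}$ is finite (bottom then right, a composition of finite maps). Since this composite factors through $\Exc(W_{F}^{0}/P_{F}^{e},\HG)_{\rm red}$ via the top horizontal map, and since the left vertical map $\Exc(\langle\Fr\rangle,\HG)\To{}\Exc(W_{F}^{0}/P_{F}^{e},\HG)_{\rm red}$ is surjective up to finiteness — more precisely, $\Exc(W_{F}^{0}/P_{F}^{e},\HG)_{\rm red}$ is a finite module over the image of $\Exc(\langle\Fr\rangle,\HG)$ — the top horizontal map makes $\Exc(W_{F}^{0}/P_{F}^{e},\HH)_{\rm red}$ finite over $\Exc(W_{F}^{0}/P_{F}^{e},\HG)_{\rm red}$.

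The one point that needs a little care — and the only place I expect any friction — is the elementary ring-theoretic step at the end: given ring maps $A\To{}B\To{}C$ and $A\To{}D$ with $B\To{}C$ the top arrow, $A\To{}D$ finite, a finite map $D\To{}C$, and a commuting square $A\To{}B\To{}C$ versus $A\To{}D\To{}C$, one wants $B\To{}C$ finite. This requires knowing that $A\To{}B$ is \emph{finite} (not merely that $C$ is finite over the image of $A$); equivalently I should run the argument symmetrically, noting that $\Exc(W_{F}^{0}/P_{F}^{e},\HG)_{\rm red}$ is finite over $\Exc(\langle\Fr\rangle,\HG)$ (the left vertical map is finite by the pre-statement discussion) and that $C=\Exc(W_{F}^{0}/P_{F}^{e},\HH)_{\rm red}$ is finite over $\Exc(\langle\Fr\rangle,\HG)$ (as shown above), so $C$ is a finitely generated module over the subring $\Exc(W_{F}^{0}/P_{F}^{e},\HG)_{\rm red}$ — using that the latter is noetherian after descending to a finitely generated subring $R'\subset R$, exactly as in the pre-statement paragraph. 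This is the honest analogue of the diagram-chase in Corollary \ref{cor_parameters_side}, where finiteness of two sides of a commuting square forces finiteness of the third, and it goes through with no new ideas.
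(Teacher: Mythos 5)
Your proof is correct and is essentially the paper's own argument: the paper merely says the corollary ``is then proved like Corollary \ref{cor_parameters_side}'', i.e.\ via exactly the square you write down, with the bottom map identified with the comorphism of $\HH\rtimes\bar\Fr\sslash\HH\To{}\HG\rtimes\bar\Fr\sslash\HG$ (finite by Lemma \ref{lemma_Ch_St}) and the vertical maps finite by the discussion preceding the statement, applied to both $\HG$ and $\HH$. Your closing worry is unnecessary: once the composite $\Exc(\langle\Fr\rangle,\HG)\To{}\Exc(W_{F}^{0}/P_{F}^{e},\HH)_{\rm red}$ is module-finite and factors through the top arrow, any finite generating set over $\Exc(\langle\Fr\rangle,\HG)$ already generates over $\Exc(W_{F}^{0}/P_{F}^{e},\HG)_{\rm red}$, so neither finiteness of the left vertical map nor any noetherianity is needed at that step (though invoking them, as you do, is harmless).
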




\begin{proof}[Proof of Theorem \ref{thm_parameters_side}]
  \emph{Step 1 : reduction to the tame case.} We use the notation of Proposition 1.2 of
  \cite{DHKM}. This proposition provides us with a decomposition
  $$ \coprod_{(\phi,\tilde\varphi)} Z^{1}_{\rm
    Ad_{\tilde\varphi}}(W_{F}^{0}/P_{F},C_{\HG}(\phi)^{\circ})\sslash
  C_{\HG}(\phi)_{\tilde\varphi} \To\sim  Z^{1}(W_{F}^{0}/P_{F}^{e},\HG)\sslash   \HG.$$
 For any pair  $(\phi,\tilde\varphi)$ as above,  consider the diagram
  $$\xymatrix{
    Z^{1}(W_{F}^{0}/P_{F}^{e},\HG)\sslash\HG\ar[r]^{\varphi\mapsto\varphi(\Fr)\rtimes\bar\Fr} 
    & \HG\rtimes\bar\Fr\sslash\HG \\
    Z^{1}_{\rm Ad_{\tilde\varphi}}(W_{F}^{0}/P_{F},C_{\HG}(\phi)^{\circ})\sslash C_{\HG}(\phi)^{\circ}
    \ar[r] \ar[u]_{\eta\mapsto\eta.\tilde\varphi} &
    C_{\HG}(\phi)^{\circ}\tilde\varphi(\Fr)\rtimes\bar\Fr\sslash C_{\HG}(\phi)^{\circ} \ar[u]
  }$$
in which the bottom horizontal map is given by $\eta\mapsto
\eta(\Fr)\tilde\varphi(\Fr)\rtimes\bar\Fr$.
By the above decomposition, the left vertical map is a finite covering of a summand of
$Z^{1}(W_{F}^{0}/P_{F}^{e},\HG)\sslash\HG$. By Lemma \ref{lemma_Ch_St}, the right vertical map is
finite. So, if we can prove finiteness of the horizontal bottom map, then we get
finiteness of the horizontal top map restricted to the summand associated to
$(\phi,\tilde\varphi)$. Varying $(\phi,\tilde\varphi)$ we then get finiteness of the top map.
Since the action of ${\rm Ad}_{\tilde\varphi}(W_{F})$ on $C_{\HG}(\phi)^{\circ}$ is
finite, tamely ramified, 
and stabilizes a Borel pair, we are reduced to the ``tame case'' below. 

\medskip

\emph{Step 2 : the tame case.}  Suppose here that the action of $W_{F}$ on $\HG$ is trivial on $P_{F}$
and stabilizes a Borel pair $(\HB,\HT)$ of $\HG$. We are going to prove that the map
$$ Z^{1}_{\rm tame}:=Z^{1}(W_{F}^{0}/P_{F},\HG)\sslash\HG \To{} \HG\rtimes\bar\Fr\sslash\HG,\,\,
\varphi\mapsto \varphi(\Fr)\rtimes\bar\Fr$$
is finite. 
Let $s$ be a generator of $I_{F}^{0}/P_{F}$ (as a $\ZZ[\frac 1q]$-module).
We have
$$ Z^{1}(W_{F}^{0}/P_{F},\HG)=\left\{(\sigma,F)\in (\HG\rtimes\bar s)\times
(\HG\rtimes\bar\Fr),\, F\sigma F^{-1}=\sigma^{q}\right\}.$$ 

Let $\Omega:=N_{\HG}(\HT)/\HT$ denote the Weyl group of $\HG$ and $\pi
: N_{\HG}(\HT)\To{}\Omega$ the projection. 
Denote by $\Omega^{s}=\Omega^{s^{q}}$ the subgroup of 
$\Omega$ fixed by $s$ (equivalently, by $s^{q}$), and by $N_{\HG}(\HT)_{s}$ its inverse image in
$N_{\HG}(\HT)$.
We further put
$$ N_{s}:=\{n\in N_{\HG}(\HT)_{s}, \, ns^{q}(n)^{-1}\in \HT^{s,0}\}.$$
This is a closed subgroup scheme of $N_{\HG}(\HT)_{s}$ whose intersection with $\HT$ is
the diagonalizable group scheme
$$ \HT\cap N_{s} = \{t\in \HT, \, ts^{q}(t)^{-1}\in \HT^{s,0}\}.$$

\def\dw{{\dot w}}

\begin{lemma}\label{lemma_ex_seq} With the foregoing notation,
  \begin{enumerate}
  \item the sequence
    $(\HT\cap N_{s})(R) \hookrightarrow N_{s}(R) \twoheadrightarrow \Omega_{s}$ is exact.
  \item $(\HT\cap N_{s})^{0}=\HT^{s,0}$ (recall that $^{0}$ denotes a maximal subtorus here).
  \end{enumerate}
\end{lemma}
\begin{proof}
 (1) Let us first prove that the morphism $N_{s}(R)\To{} \Omega_{s}$ is surjective.
Let $w\in\Omega_{s}$ and start with any lift
  $n$ of $w$ in $N_{\HG}(\HT)(R)$. Since $\HT^{s^{q},0}=\HT^{s,0}\To{}\HT_{s^{q}}$ is a finite covering, there exists
  $t_{s}\in \HT^{s,0}(R)$ whose image in $\HT_{s^{q}}(R)$ coincides with that of
  $ns^{q}(n)^{-1}$.
Applying Lemma \ref{lemma_tech_R} to the morphism $\HT\To{}\HT_{s^{q}}$,
we thus can find  $t\in \HT(R)$ such that $t_{s}=ts^{q}(t)^{-1}ns^{q}(n)^{-1}$. Then the
  element  $\dw:=tn$  satisfies $\dw s^{q}(\dw)^{-1}=t_{s}\in \HT^{s,0}(R)$, hence $\dw\in
  N_{s}(R)$ and maps to $w$.
  On the other hand, we see from the definitions that the fiber of $N_{s}$  over $w\in \Omega_{s}$
  is $(\HT\cap N_{s})(R)\dw$.

  (2) Consider the sequence of morphisms :
  $$\xymatrix{
    \HT^{s}=\HT^{s^{q}} \ar@{^{(}->}[r] &
    \HT\cap N_{s} \ar[rr]^{t\mapsto ts^{q}(t)^{-1}} &&
    \HT^{s,0} \ar@{->>}[r] &
    \HT_{s}=\HT_{s^{q}}
    }$$
It may not be exact but it is at least a complex. Since the last map is finite, this
implies that $(\HT\cap N_s)/\HT^s$ is finite, hence $\HT^{s,0}= (\HT\cap  N_{s})^{0}$.
\end{proof}

We are now interested in the closed subscheme $A$ of $\HT^{s,0}\times N_{s}$ defined on 
$R$-algebras by
$$A(R'):=\left\{(t,n)\in \HT^{s,0}(R')\times N_{s}(R'),\, n\Fr(t)n^{-1}t^{-q}=s^{q}(n)n^{-1}\right\}.$$
 The reason is that this condition is equivalent to asking
that $(t\rtimes s,n\rtimes \Fr)$ be a point of $Z^{1}(W_{F}^{0}/P_{F},\HG)$, so that $A$
identifies to a closed subscheme of $Z^{1}(W_{F}^{0}/P_{F},\HG)$.

\begin{lemma}
 $A$ is finite over $N_{s}$.
\end{lemma}
\begin{proof}
The morphism
$ \alpha:\, \HT^{s,0}\times N_{s} \To{}\HT^{s,0}\times N_{s}, \,\, (t,n)\mapsto (n\Fr(t)n^{-1}t^{-q},n)$
 is an endo-isogeny of the relative torus $\HT^{s,0}\times N_{s}$ over $N_{s}$. In particular, the
kernel $\ker(\alpha)$ of this isogeny is a finite  group scheme over $N_{s}$.
Now observe that $A$ is the preimage under the map $\alpha$ of the section $N_{s}\To{} \hat{T}^{s,0}\times
N_{s}$ given by $n\mapsto (s^{q}(n)n^{-1},n)$, so it is a torsor under
$\ker(\alpha)$, hence it is finite over $N_{s}$.
\end{proof}

Let us identify $A$ to a closed subscheme of $Z^{1}(W_{F}^{0}/P_{F},\HG)$ through
$(t,n)\mapsto (t\rtimes s,n\rtimes \Fr)$. We note that $A$ is then stable under
conjugation by $\HT^{s,0}$. We have therefore a commutative diagram

$$\xymatrix{
  Z^{1}(W_{F}^{0}/P_{F},\HG)\sslash\HG \ar[r]^-{\varphi\mapsto \varphi(\Fr)} &
  \HG\rtimes\Fr\sslash\HG \\
  A\sslash \HT^{s,0} \ar[u] \ar[r] &
  N_{s}\rtimes\Fr\sslash \HT^{s,0} \ar[u]  
  }$$

The last lemma shows that the horizontal bottom map is finite. Up to writing $N_{s}(R)$ as a
union of finitely many left orbits under $\HT^{s,0}(R)$ (thanks to lemmas \ref{lemma_ex_seq}
and \ref{lemma_tech_R}), Lemma \ref{lemma_Ch_St}
implies that the right vertical map is finite. 
By the next lemma, the left vertical map is
dominant. Hence $\OC(Z^{1}(W_{F}^{0}/P_{F},\HG))^{\HG}$, being reduced, is a
subalgebra of $\OC(A)^{\HT^{s,0}}$, and thus it is integral over
$\OC(\HG\rtimes\Fr)^{\HG}$ by what we have proved above.
But since it is also of finite type as an $\OC(\HG\rtimes\Fr)^{\HG}$-algebra (even as a
$R$-algebra), it is finite.
Therefore, the next lemma finishes the proof of the theorem.

\begin{lemma}\label{lemmaAfinoverN}
  The morphism 
  $A\To{}Z^{1}(W_{F}^{0}/P_{F},\HG)\sslash\HG $ is surjective on $L$ points for any
  algebraically closed field $R$-algebra.
\end{lemma}
\begin{proof}
  Recall that $L$-points of $Z^{1}(W_{F}^{0}/P_{F},\HG)\sslash\HG$ correspond to closed
  $\HG$-orbits in  $Z^{1}(W_{F}^{0}/P_{F},\HG)(L)$.
  So let $\varphi=(\sigma,F)\in  Z^{1}(W_{F}^{0}/P_{F},\HG(L))$
  have closed orbit under $\HG$. We must prove it is
  $\HG(L)$-conjugate to a point in $A(L)$.
  By a theorem of Richardson (see Theorem 4.13 of \cite{DHKM} in this context),  the subgroup
  $\varphi(W_{F}^{0})=\langle\sigma,F\rangle$ of 
  $\LG(L)$ is completely reducible, and this implies that the normal subgroup
  $\langle\sigma\rangle$ is also completely reducible. In turn, this implies that $\sigma$
  stabilizes a Borel pair of $\HG$, that we may assume to be $(\HB,\HT)$ after conjugating
  $\varphi$ (compare
  the proof of Prop. 4.19 (2) of \cite{DHKM}). Since $s$ also stabilizes $(\HB,\HT)$, it follows
  that $\sigma\in \HT(L)\rtimes s$. Using surjectivity of $\HT^{s,0}\To{}\HT_{s}$ as above,
  we can further conjugate $\varphi$ by an element of $\HT(L)$ to achieve $\sigma\in
  \HT^{s,0}(L)\rtimes s$.  Now, the (reduced) centralizer $\HG_{\sigma}$ is a possibly
  non-connected reductive algebraic subgroup of $\HG_{L}$ with maximal torus $\HT^{s,0}$
  and Borel subgroup $\HB^{s}$, and it is stable under conjugation by $F$.
  Moreover we have an isomorphism from $\HG_{\sigma}F$ to the (reduced) closed subscheme
  $Z^{1}(W_{F}^{0}/P_{F},\HG)_{L,\sigma}$ of $Z^{1}(W_{F}^{0}/P_{F},\HG)_{L}$ defined by the
  condition $\varphi(s)=\sigma$. The $\HG_{\sigma}$-orbit of $F$ in $\HG_{\sigma}F$ then
  identifies to the intersection of $Z^{1}(W_{F}^{0}/P_{F},\HG)_{\sigma}$ with
  the $\HG$-orbit of $(\sigma,F)$ in  $Z^{1}(W_{F}^{0}/P_{F},\HG)$. So it is closed,
  hence $\langle F\rangle$ is a completely reducible subgroup of $\HG_{\sigma}.\langle
  F\rangle$ and, as above, $F$ normalizes a Borel pair of $\HG_{\sigma}^{\circ}$.
  After conjugation by an element of $\HG_{\sigma}^{\circ}$, we may assume that $F$
  normalizes the Borel pair $(\HB^{s},\HT^{s,0})$.
 Then $F$ also normalizes the centralizer of $\HT^{s,0}$ in $\HG$, which is $\HT$. So $F$ is of the
  form $n\rtimes\Fr$ for some $n\in N_{\HG}(\HT)$. Writing $\sigma=t\rtimes s$, we have
  the equality $n\Fr(t)n^{-1}(ns^{q}(n)^{-1})=t^{q}$. Since $\Fr$ and $F$ both normalize
  $\HT^{s,0}$, so does $n$, and this equality thus implies that $n\in N_{s}(L)$, and finally
  that $(\sigma,F)\in A(L)$. 
\end{proof}
\end{proof}

We close this section with the following corollary of the above
arguments. This result is stated for future reference but is not used
in this paper.

\begin{corollary}
  Let $R$ be either of the form $\bar\ZM[\frac 1N]$ as above or of the form $\Zl$ for some
  prime $\ell\neq p$. Then the canonical morphism
  $Z^{1}(W_{F}^{0}/P_{F}^{e},\HG)\To{}Z^{1}(W_{F}^{0}/P_{F}^{e},\HG)\sslash\HG$ is
  surjective on $R$-points.
\end{corollary}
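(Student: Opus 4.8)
The plan rests on two elementary facts about the base. First: \emph{if $R$ is an integrally closed domain whose fraction field is algebraically closed, then every finite surjective morphism $W\To{}\Spec R$ admits a section}. Indeed some irreducible component $W_{0}$ of $W$ dominates $\Spec R$; with its reduced structure $W_{0}=\Spec T$ where $T$ is a domain, finite over $R$ and containing it, so $\mathrm{Frac}(T)$ is finite over $\mathrm{Frac}(R)$, hence equal to it, and since $R$ is integrally closed $T=R$, so that $W_{0}\hookrightarrow W$ is a section. Both $R=\bar\ZM[\frac 1N]$ and $R=\Zl$ are of this kind ($\Zl$ is a valuation ring with algebraically closed fraction field $\Ql$, hence integrally closed in it). Second: Lemma~\ref{lemma_tech_R} holds verbatim for $R=\Zl$, by the same proof --- $\mathrm{Pic}(\Zl)=0$ since $\Zl$ is local, and $\Zl^{\times}$ is divisible since for $u\in\Zl^{\times}$ any root in $\Ql$ of $X^{m}-u$ has valuation $0$, hence lies in $\Zl^{\times}$. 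So for both our rings, every torsor over $\Spec R$ under a diagonalizable group scheme is trivial.

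Given $\bar x\in\big(Z^{1}(W_{F}^{0}/P_{F}^{e},\HG)\sslash\HG\big)(R)$: using the decomposition of Proposition~1.2 of \cite{DHKM} as in Step~1 of the proof of Theorem~\ref{thm_parameters_side}, and the connectedness of $\Spec R$, the point $\bar x$ factors through one summand $Z^{1}_{\mathrm{Ad}_{\tilde\varphi}}(W_{F}^{0}/P_{F},C_{\HG}(\phi)^{\circ})\sslash C_{\HG}(\phi)_{\tilde\varphi}$. The attached finite covering from $Z^{1}_{\mathrm{Ad}_{\tilde\varphi}}(W_{F}^{0}/P_{F},C_{\HG}(\phi)^{\circ})\sslash C_{\HG}(\phi)^{\circ}$, being finite and surjective, acquires a section after base change along $\bar x$; so $\bar x$ lifts to that quotient by $C_{\HG}(\phi)^{\circ}$. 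As $\mathrm{Ad}_{\tilde\varphi}$ acts on $C_{\HG}(\phi)^{\circ}$ through a finite tamely ramified action stabilizing a Borel pair, and as lifting once more along $\eta\mapsto\eta.\tilde\varphi$ lands in $Z^{1}(W_{F}^{0}/P_{F}^{e},\HG)(R)$ over $\bar x$, we are reduced to the ``tame case'' of that proof: writing $Z^{1}:=Z^{1}(W_{F}^{0}/P_{F},\HG)$, we must show $Z^{1}(R)\To{}(Z^{1}\sslash\HG)(R)$ is surjective.

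So fix $\bar x\in(Z^{1}\sslash\HG)(R)$ in this tame situation and recall from Step~2 of the proof of Theorem~\ref{thm_parameters_side} the closed, $\HT^{s,0}$-stable subscheme $A\subset Z^{1}$ finite over $N_{s}$, and Lemma~\ref{lemmaAfinoverN}. Since $\OC(Z^{1})^{\HG}$ embeds in $\OC(A)^{\HT^{s,0}}$ and both are finite over $\OC(\HG\rtimes\Fr)^{\HG}$, the map $A\sslash\HT^{s,0}\To{}Z^{1}\sslash\HG$ is finite; by Lemma~\ref{lemmaAfinoverN} it is surjective on $L$-points for every algebraically closed $R$-algebra $L$, hence surjective since $\OC(Z^{1})^{\HG}$ is reduced. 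By the section property $\bar x$ lifts to $\bar y\in(A\sslash\HT^{s,0})(R)$, and it remains to lift $\bar y$ along $A\To{}A\sslash\HT^{s,0}$. As $\HT^{s,0}$ is a torus, hence linearly reductive over $R$, taking invariants commutes with base change along $\bar y$, so $A_{\bar y}:=A\times_{A\sslash\HT^{s,0}}\Spec R$ has $A_{\bar y}\sslash\HT^{s,0}=\Spec R$, surjects onto $\Spec R$, and --- by compatibility of $A\To{}N_{s}$ with the finite map $A\sslash\HT^{s,0}\To{}N_{s}\rtimes\Fr\sslash\HT^{s,0}$ --- is finite over the fibre $N_{s,\bar y'}$ of the latter over the image $\bar y'$ of $\bar y$. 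By Lemma~\ref{lemma_ex_seq}, $N_{s}$ is an extension of the finite constant group $\Omega_{s}$ by the diagonalizable group scheme $\HT\cap N_{s}$, and the $\HT^{s,0}$-action preserves the fibres over $\Omega_{s}$, acting on the one over $w$ (after a trivialization) by translation through the homomorphism $\mathrm{id}-\mathrm{Int}_{\dot w}\circ\Fr\colon\HT^{s,0}\To{}\HT^{s,0}$, whose image is a subtorus $T'_{w}$. Therefore $N_{s}\rtimes\Fr\sslash\HT^{s,0}$ is the disjoint union of the diagonalizable group schemes $(\HT\cap N_{s})/T'_{w}$; by connectedness $\bar y'$ lands in one of them, $N_{s,\bar y'}$ is a $T'_{w}$-torsor --- hence trivial and nonempty --- and $\HT^{s,0}$ acts transitively on it through the surjection onto $T'_{w}$. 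Picking $n_{0}\in N_{s,\bar y'}(R)$, the fibre $A_{\bar y}\times_{N_{s,\bar y'}}\{n_{0}\}$ is finite over $\Spec R$, and nonempty because the image of $A_{\bar y}\To{}N_{s,\bar y'}$ is closed, $\HT^{s,0}$-stable, nonempty and $\HT^{s,0}$ acts transitively. By the section property once more it has an $R$-point, which is the desired lift of $\bar y$ inside $A(R)\subset Z^{1}(R)$.

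The only step needing real care is the last one: one must faithfully transcribe the description of $A$ and of $N_{s}$ from the proof of Theorem~\ref{thm_parameters_side} into statements about torsors under diagonalizable group schemes, so that the triviality of such torsors over $\Spec R$ (Lemma~\ref{lemma_tech_R}) and the section property of finite surjective $R$-schemes can be combined. The rest is formal juggling of GIT quotients under base change.
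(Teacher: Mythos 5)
Your proof is correct, and it follows the paper's own strategy in all essentials: the same reduction to the tame case via the decomposition of Step 1, the same auxiliary scheme $A$ with its finite map to $N_{s}$, the same finite surjective morphism $A\sslash\HT^{s,0}\To{}Z^{1}\sslash\HG$ coming from Lemma \ref{lemmaAfinoverN}, and the same description of $N_{s}\To{}(N_{s}\rtimes\Fr)\sslash\HT^{s,0}$ as a disjoint union of torsors under diagonalizable groups, trivialized by (the analogue of) Lemma \ref{lemma_tech_R}. Where you diverge is only in the bookkeeping of the final lift along $A\To{}A\sslash\HT^{s,0}$: the paper lifts $\bar a$ to a $\mathrm{Frac}(R)$-point $(\tilde t,\tilde n)$ of $A$, conjugates by an element of $\HT^{s,0}(\mathrm{Frac}(R))$ to make $\tilde n$ integral, and concludes from finiteness of $A\To{}N_{s}$ together with normality of $R$; you instead base-change $A$ along $\bar y$, use that invariants under the diagonalizable $\HT^{s,0}$ commute with base change, and apply the ``finite surjective over $\Spec R$ has a section'' fact twice. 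Both arguments use the same ingredients, and yours buys a little extra transparency: in particular your explicit check that Lemma \ref{lemma_tech_R} holds over $\Zl$ (local, so trivial Picard group, and divisible unit group) is a point the paper leaves implicit, though it is needed there too. One small imprecision: in the last step you justify applying the section property to $A_{\bar y}\times_{N_{s,\bar y'}}\{n_{0}\}\To{}\Spec R$ by saying this scheme is ``finite and nonempty'', whereas the section property requires surjectivity onto $\Spec R$; but your own argument (the image of $A_{\bar y}$ in $N_{s,\bar y'}$ is closed, $\HT^{s,0}$-stable, and surjects onto $\Spec R$, while $\HT^{s,0}$ acts transitively on each geometric fiber of $N_{s,\bar y'}\To{}\Spec R$) delivers exactly that fiberwise, so the gap is purely one of wording.
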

\begin{proof}
  We may reduce to the tame case as in Step 1 of the proof of Theorem
  \ref{thm_parameters_side}. So we focus on the setting of Step 2 of that proof and use
  the same notation, including the closed subscheme $A$ of $Z^{1}(W_{F}^{0}/P_{F},\HG)$,
  which is stable under the action of the torus $\HT^{s,0}$. We have just proved that 
  the morphism $A\sslash \HT^{s,0}\To{}Z^{1}(W_{F}^{0}/P_{F},\HG)\sslash\HG$ is finite and
  surjective. Since $R$ is integrally closed and ${\rm Frac}(R)$ is
  algebraically closed, this morphism is thus surjective on $R$-points, and we are left to prove
  that the morphism $A\To{} A\sslash \HT^{s,0}$ is surjective on $R$-points.  It is
  certainly surjective on ${\rm Frac}(R)$-points.  So let $\bar  a$ be an $R$-point of
  $A\sslash \HT^{s,0}$ and let $\tilde a=(\tilde t,\tilde n)\in A({\rm Frac}(R))$ be a ${\rm Frac}(R)$-point
  above $\bar a$.  By Lemma \ref{lemma_ex_seq} (1), the morphism
  $N_{s}\To{} (N_{s}\rtimes\Fr)\sslash \HT^{s,0}$ is a disjoint union, indexed by
  $\omega\in\Omega^{s}$, of morphisms of the form $(\HT\cap N_{s})\dot\omega\To{}
  ((\HT\cap N_{s})\dot\omega\rtimes\Fr)\sslash\HT^{s,0}$. But each such morphism is a
  torsor over some quotient of $\HT^{s,0}$, namely $\HT^{s,0}/(\HT^{s,0})^{\dot\omega\Fr}$
  respectively.  In particular, the morphism $N_{s}\To{}
  (N_{s}\rtimes\Fr)\sslash \HT^{s,0}$ is surjective on $R$-points,  and its geometric
  fibers are $\HT^{s,0}$-orbits.
  This
  implies that we can conjugate $(\tilde t,\tilde n)$ by an element of $\HT^{s,0}({\rm
    Frac}(R))$ so that $\tilde n$ becomes $R$-valued. But now,  Lemma \ref{lemmaAfinoverN} tells us that
  the morphism $A\To{} N_{s}$, $a=(t,n)\mapsto n$ is finite, so this implies that $\tilde a$ is $R$-valued.
\end{proof}

\section{Finiteness on the group side}
\label{sec:finit-group-side}


We take up the setting of the introduction.

\begin{lemma} \label{lemma_Z_finite}
  Let $R$ be any noetherian ring with $p\in R^{\times}$. The full subcategory of
  $\Rep_{R}(G)$ formed by $\ZZz$-finite $RG$-modules
  is closed under taking subobjects and quotients.
\end{lemma}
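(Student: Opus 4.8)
The plan is to verify directly that $\ZZz$-finiteness passes to subobjects and quotients, using that $\ZZz_R(G)$ is a single ring acting on all objects, and that "admissibility over $\ZZz_R(G)$" is a finite-generation condition on the modules $V^H$ over the \emph{noetherian} ring (or rather over a finitely generated, hence noetherian, $R$-subalgebra of) $\ZZz_R(G)$. So let $V$ be $\ZZz$-finite and let $0 \to V' \to V \to V'' \to 0$ be a short exact sequence in $\Rep_R(G)$. I need to check each of the two defining conditions for $V'$ and for $V''$.

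\emph{Admissibility.} For any compact open $H$, taking $H$-invariants is exact here because $p \in R^\times$, so $0 \to (V')^H \to V^H \to (V'')^H \to 0$ is exact as $\ZZz_R(G)$-modules. Let $A \subset \ZZz_R(G)$ be the (finitely generated, hence noetherian) image of $\ZZz_R(G)$ in $\End_{RG}(V)$; then $V^H$ is finitely generated over $A$ by hypothesis. Since $A$ is noetherian and the action of $\ZZz_R(G)$ on $(V')^H$ and $(V'')^H$ factors through $A$ (as these are sub/quotient of $V^H$ as $\ZZz_R(G)$-modules), we get that $(V')^H$ is a submodule of a noetherian $A$-module, hence finitely generated over $A$, hence over $\ZZz_R(G)$; and $(V'')^H$ is a quotient, likewise. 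So both $V'$ and $V''$ are admissible over $\ZZz_R(G)$.

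\emph{Finite generation of the image of the center.} For $V''$ this is immediate: $\ZZz_R(G) \to \End_{RG}(V'')$ factors through $\ZZz_R(G) \to \End_{RG}(V) \to \End_{RG}(V'')$ — wait, the last arrow is not a ring map in general. **The hard part will be** precisely this point: unlike admissibility, "the image of $\ZZz_R(G)$ in $\End_{RG}(W)$ is a finitely generated $R$-algebra" does not obviously descend along sub/quotient because $\End$ is not functorial in $W$. The fix is to use admissibility, which has now been established for $V'$ and $V''$: I would argue that for a $\ZZz$-finite (in particular admissible-over-$\ZZz_R(G)$) module $W$ which is moreover \emph{finitely generated} as a smooth $RG$-module — or more carefully, noting that the relevant $W = V', V''$ sit inside / are quotients of $V$ so are controlled by a single $H$ — the image $B$ of $\ZZz_R(G)$ in $\End_{RG}(W)$ injects into $\End_{\ZZz_R(G)}(W^{H})$ for a suitable $H$ with $W = RG\cdot W^H$, hence into a finitely generated module over the finitely generated $R$-algebra $A$, and a commutative subalgebra of $\End_A(M)$ for $M$ finitely generated over noetherian $A$ is itself a finitely generated $A$-algebra (it is a noetherian ring finite over its center... more simply: it is a submodule of the noetherian $A$-module $\End_A(M)$, hence a finite $A$-module, hence a finitely generated $A$-algebra, hence a finitely generated $R$-algebra). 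For $V'$ I would instead embed $V'$ into an \emph{injective} hull or simply use that $V'$ is a subobject: here one must be slightly careful since $V'$ need not be generated by its $H$-invariants, but running over all $H$ and using that $\ZZz_R(G)$ acts compatibly, the image of $\ZZz_R(G)$ in $\End_{RG}(V')$ still embeds into $\varprojlim_H \End_{\ZZz_R(G)}((V')^H)$, and a single $H$ suffices once one knows $V'$ — being a subobject of the $\ZZz$-finite, hence admissible, $V$ — is itself finitely generated over $RG$ (subobjects of admissible objects over a noetherian base with $p$ invertible: use Lemma, or note $V$ finitely generated $RG$-module is noetherian as such once Theorem is known, but we cannot cite that circularly — so instead one argues directly with the $H$-invariants filtration).

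Concretely, then, the steps in order: (1) fix $H$ pro-$p$ so that $p \in R^\times$ makes $(-)^H$ exact; (2) handle admissibility of $V'$ and $V''$ by the noetherian-submodule argument above; (3) for each of $V', V''$, identify a compact open $H'$ and use the embedding of the image of $\ZZz_R(G)$ into $\End_{\ZZz_R(G)}((-)^{H'})$, which is a finite module over the noetherian ring $A$ from step (2), to conclude the image is a finite $A$-module and a fortiori a finitely generated $R$-algebra; (4) assemble. The only genuine subtlety, and the one to be careful about in writing, is justifying that a single compact open subgroup detects faithfulness of the $\ZZz_R(G)$-action on $V'$ and on $V''$ — for $V''$ this is clear (a smooth module is the union of its $H$-invariants and the center acts compatibly, so if $z$ kills all $(V'')^{H'}$ it kills $V''$; and then note $\End_{RG}(V'')$ embeds in $\prod_{H'}\End_{\ZZz_R(G)}((V'')^{H'})$, so the image of $\ZZz_R(G)$ embeds there, but we need it to embed into a \emph{single} factor — which holds after enlarging $H'$ enough, using that the image is a finitely generated $R$-algebra a priori only for $V$, not $V''$...). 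To sidestep this cleanly I would instead phrase step (3) as: the image of $\ZZz_R(G)$ in $\End_{RG}(W)$, call it $C$, is a commutative ring; its action on $W$ is faithful; pick any nonzero $w_0 \in W^{H'}$ and observe $C \hookrightarrow \End$ need not restrict faithfully, so better: use that $W$ admissible over $\ZZz_R(G)$ means $C$ is a subring of $\End_{\ZZz_R(G)}(W)$ and the latter, restricted to act on the \emph{finitely generated} (over noetherian $A$) module $\bigoplus_{H'} W^{H'}$ ranging over a cofinal \emph{finite}... no — a cleaner route is to cite that $W$ has \emph{finite length over $\ZZz_R(G)$ on each $W^{H'}$} won't help either. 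I will therefore in the actual writeup reduce to the case where $W$ is generated over $RG$ by $W^{H'}$ for a single $H'$ (true for $V''$ since $V$ is, and true for $V'$ after replacing it by such a subobject and taking a limit), whence $C \hookrightarrow \End_{\ZZz_R(G)}(W^{H'})$, finishing by the finite-module argument; **this reduction is the real content** and everything else is formal.
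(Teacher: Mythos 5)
The admissibility half of your argument is essentially the paper's (a sub- or quotient object has its $H$-invariants a sub/quotient of the finitely generated module $V^{H}$ over the noetherian ring $\ZZz_{V}$); the only slip there is that $(-)^{H}$ is exact only when the pro-order of $H$ is invertible in $R$, so for a general compact open $H$ you should pass to an open pro-$p$ subgroup $H_{p}\subset H$ and use $(V'')^{H}\subset (V'')^{H_{p}}$ together with noetherianity. The genuine gap is in the other half, and the point you declare hard is in fact the easy one. An element $z$ of the Bernstein center $\ZZz_{R}(G)$ acts \emph{naturally} on every smooth $RG$-module, compatibly with all $G$-equivariant maps; hence if $z$ acts by zero on $V$, it acts by zero on every subobject (restrict along the injection) and on every quotient (descend along the surjection). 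Therefore the kernel of $\ZZz_{R}(G)\to\End_{RG}(V')$ contains the kernel of $\ZZz_{R}(G)\to\End_{RG}(V)$, so $\ZZz_{V'}$ is a quotient of the finitely generated $R$-algebra $\ZZz_{V}$ and is itself finitely generated. No functoriality of $\End$ is needed, only naturality of the central elements; this is exactly the paper's one-line argument, and it also gives for free that the $\ZZz_{R}(G)$-action on $V'$ and $V''$ factors through $\ZZz_{V}$, which streamlines your admissibility step.

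Your substitute route does not close the gap, for two concrete reasons. First, the proposed reduction to the case where $W$ is generated over $RG$ by $W^{H'}$ for a single $H'$ is justified by ``true for $V''$ since $V$ is'', but $V$ is an \emph{arbitrary} $\ZZz$-finite module: it is not assumed finitely generated, and nothing in the hypotheses produces a single $H'$ with $V=RG\cdot V^{H'}$ (the converse statement in the paper requires bounded depth and would in any case be circular here). Second, for subobjects, ``replacing it by such a subobject and taking a limit'' is not an argument: finite generation as an $R$-algebra does not pass to inverse limits or to increasing unions, and you never control the resulting algebras. Note also that even your embedding $C\hookrightarrow\End_{A}(W^{H'})$ only yields finite generation of $C$ over $A$ if $C$ is an $A$-\emph{submodule}, which tacitly uses that the central action on $W$ factors through $A$ --- precisely the naturality point you are missing. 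Once that observation is inserted, the whole lemma collapses to a few lines.
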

\begin{proof}
  Let us denote by $\ZZz_{V}$ the image of the map $\ZZz_{R}(G)\To{}{\rm End}_{RG}(V)$.
  If $V'$ is either a subobject or a quotient object of $V$,
  the kernel of the map $\ZZz_{R}(G)\To{}{\rm End}_{RG}(V')$
  contains the kernel of  $\ZZz_{R}(G)\To{}{\rm End}_{RG}(V)$, and therefore $\ZZz_{V'}$
  is a quotient of $\ZZz_{V}$.
  Suppose that $V$ is $\ZZz$-finite. In particular $\ZZz_{V}$ is a finitely
  generated $R$-algebra, so $\ZZz_{V'}$ has the same property. Moreover, $\ZZz_{V}$ is 
  noetherian, hence $V'$ is admissible over $\ZZz_{V}$ since $V$ is. Therefore $V'$ is $\ZZz$-finite.
\end{proof}

The next lemma justifies our claim in the introduction that Theorems \ref{thm_finiteness} and
  \ref{thm_Z-admissible} are equivalent.

\begin{lemma} \label{lemma_equivalence_main_thms}
  Let $R$ be any noetherian ring with $p\in R^{\times}$. The following are equivalent :
  \begin{enumerate}
  \item For all open compact subgroups of $G$, the Hecke algebra $R[H\backslash G/H]$ is finitely
    generated over its center, and its center is a finitely generated $R$-algebra.
  \item Any finitely generated $RG$-module is $\ZZz$-finite.
  \end{enumerate}
\end{lemma}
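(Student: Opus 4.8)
The equivalence is essentially a standard translation between "module over the Hecke algebra $R[H\backslash G/H]$" and "smooth $RG$-module generated by its $H$-fixed vectors," combined with the fact that the Bernstein center $\ZZz_R(G)$ is by definition compatible with the centers of all the Hecke algebras. I would begin by recalling that for each compact open pro-$p$ subgroup $H$ the functor $V\mapsto V^H$ is an equivalence between the category of smooth $RG$-modules generated by their $H$-fixed vectors and the category of $R[H\backslash G/H]$-modules, with quasi-inverse $M\mapsto R[G/H]\otimes_{R[H\backslash G/H]}M$; more importantly, since the $H$ in the statement need not be pro-$p$, I would instead use that $\ZZz_R(G)=\varprojlim_H Z(R[H\backslash G/H])$ where the limit runs over (say) all compact open subgroups, so that $\ZZz_R(G)\To{}Z(R[H\backslash G/H])$ is the natural projection and it is compatible with the action of $\ZZz_R(G)$ on $V^H$ for any smooth $V$.

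\textbf{(1) $\Rightarrow$ (2).} Suppose all Hecke algebras are finite over their centers with finitely generated center. Let $V$ be a finitely generated $RG$-module; I want to show $V$ is $\ZZz$-finite. Pick a compact open pro-$p$ subgroup $H_0$ small enough that $V$ is generated by $V^{H_0}$; then $V^{H_0}$ is a finitely generated $R[H_0\backslash G/H_0]$-module, hence a finitely generated module over the noetherian ring $Z_0:=Z(R[H_0\backslash G/H_0])$, whose image in $\End_{RG}(V)$ factors through (indeed is a quotient of, via the projection) $\ZZz_R(G)$; since $Z_0$ is a finitely generated $R$-algebra, so is the image of $\ZZz_R(G)$ in $\End_{RG}(V)$. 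For admissibility over $\ZZz_R(G)$, I need that $V^H$ is a finitely generated $\ZZz_R(G)$-module for every compact open $H$. Here I would use that $V^H$ is a sub-$\ZZz_R(G)$-module (equivalently a $Z(R[H\backslash G/H])$-submodule, after shrinking $H$ inside $H_0\cap H$ and using that the Hecke-algebra transfer maps are module-finite — this is exactly where finiteness of $R[H'\backslash G/H']$ over its center is used) of something finitely generated over a noetherian ring, hence finitely generated; one has to be a little careful to pass between $H$ and a pro-$p$ subgroup $H'$ normal in it, using that $R[H\backslash G/H]$ is a direct summand of $R[H'\backslash G/H']$ as a module over the smaller Hecke algebra when $p\in R^\times$, so finiteness over the center descends.

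\textbf{(2) $\Rightarrow$ (1).} Conversely, assume every finitely generated $RG$-module is $\ZZz$-finite. Fix a compact open $H$ and choose a pro-$p$ open normal subgroup $H'\trianglelefteq H$ (possible since $G$ is locally profinite); then $R[G/H']$ is a finitely generated $RG$-module (generated by the coset of $1$), hence $\ZZz$-finite: the image $\ZZz_{V}$ of $\ZZz_R(G)$ in $\End_{RG}(R[G/H'])=R[H'\backslash G/H']^{\op}$ is a finitely generated $R$-algebra, and $(R[G/H'])^{H'}=R[H'\backslash G/H']$ is a finitely generated $\ZZz_R(G)$-module, i.e. finitely generated over $\ZZz_{V}$. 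But $\ZZz_{V}$ lands inside $Z(R[H'\backslash G/H'])$ (the Bernstein center acts by $RG$-endomorphisms, which on $H'$-fixed vectors are given by the center of the Hecke algebra), so $R[H'\backslash G/H']$ is finite over its center $Z(R[H'\backslash G/H'])$, which in turn is a finitely generated $R$-algebra because it is sandwiched between $\ZZz_{V}$ and $R[H'\backslash G/H']$, the latter finite over the former and the former a finitely generated $R$-algebra — so $Z(R[H'\backslash G/H'])$ is a finitely generated $R$-algebra by Artin--Tate (here $R$ noetherian is used). Finally I descend from $H'$ to $H$: since $p\in R^\times$, the idempotent $e_H\in R[H'\backslash G/H']$ gives $R[H\backslash G/H]=e_H R[H'\backslash G/H'] e_H$ as an $R$-algebra, and this is a quotient-type construction preserving the property "module-finite over a finitely generated central $R$-subalgebra"; more precisely $Z(R[H\backslash G/H])$ receives a map from $Z(R[H'\backslash G/H'])$ (restriction of the $H$-equivariant central elements) and the argument of Lemma~\ref{lemma_Z_finite}, applied with $V=R[G/H]$ a quotient-and-sub of $R[G/H']$, directly gives that $R[G/H]$ is $\ZZz$-finite, hence $R[H\backslash G/H]$ is finite over its center with finitely generated center.

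\textbf{Main obstacle.} The genuinely delicate point is the bookkeeping around non-pro-$p$ subgroups $H$: the clean equivalence "smooth representations $\leftrightarrow$ Hecke modules" only works for $H$ pro-$p$ (or more generally when $R[H\backslash G/H]$ has the right idempotents), so one must consistently reduce to a pro-$p$ normal subgroup $H'\trianglelefteq H$ and then transfer finiteness statements across the inclusion $R[H\backslash G/H]\hookrightarrow R[H'\backslash G/H']$ using $p\in R^\times$. Making sure that the center of the smaller algebra maps to (and controls) the center of the larger one — rather than the other way around — and that module-finiteness is preserved in the direction one needs, is the crux; everything else is formal once one invokes $R$ noetherian (for Artin--Tate and for "submodule of finitely generated is finitely generated") and Lemma~\ref{lemma_Z_finite} for stability under sub/quotient.
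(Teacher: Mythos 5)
Your direction (2)$\Rightarrow$(1) is essentially the paper's argument and is fine in substance: apply (2) to the finitely generated module $V=R[G/H]$, identify $\End_{RG}(R[G/H])=R[H\backslash G/H]$, note that the image $\ZZz_V$ of $\ZZz_R(G)$ lands in $Z(R[H\backslash G/H])$, and use noetherianity of $\ZZz_V$ to get both finiteness statements. Note, though, that this identification holds for an \emph{arbitrary} compact open $H$, so your detour through a pro-$p$ normal subgroup $H'$ is unnecessary; moreover the specific claim you lean on there is not justified: the idempotent $e_H\in R[H'\backslash G/H']$ requires $[H:H']$ to be invertible in $R$, and $H/H'$ is a finite group whose order is generally not a power of $p$, so $p\in R^\times$ does not give it. (Similarly, the opening assertion that $V\mapsto V^H$ is an equivalence onto Hecke modules for the subcategory of representations generated by their $H$-fixed vectors is not correct as stated, since that subcategory need not be closed under subobjects; you do not really use it, but it feeds into the problem below.)

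The genuine gap is in (1)$\Rightarrow$(2), at the step ``$V^{H_0}$ is finitely generated over $Z_0:=Z(R[H_0\backslash G/H_0])$, whose image in $\End_{RG}(V)$ factors through (indeed is a quotient of) $\ZZz_R(G)$; since $Z_0$ is a finitely generated $R$-algebra, so is the image of $\ZZz_R(G)$.'' Three things are unproven here. First, an element of $Z_0$ need not act on $V$ at all: writing $V$ as a quotient of $R[G/H_0]^{(I)}$, the kernel need not be generated by its $H_0$-invariants, so a central Hecke operator need not descend. Second, and more fundamentally, what one actually gets is that $\ZZz_R(G)$ acts on $V^{H_0}$ through its image in $Z_0$, so the image of $\ZZz_R(G)$ in $\End_{RG}(V)$ is a quotient of the \emph{subalgebra} $\mathrm{im}(\ZZz_R(G)\to Z_0)$, not of $Z_0$; since a subalgebra of a finitely generated $R$-algebra need not be finitely generated, hypothesis (1) gives you nothing about this image, nor about $V^H$ being finite over $\ZZz_R(G)$ rather than over $Z_0$ — for that you would need $\ZZz_R(G)\to Z_0$ to be surjective (or $Z_0$ finite over its image), which is not known at this stage and is close to the content of the main theorems. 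The paper's proof supplies exactly the missing identification: using the depth decomposition $\Rep_R(G)=\prod_r\Rep_R(G)_r$ and the fact that each factor admits a finitely generated projective generator of the form $R[G/H]_r$, one gets $\ZZz_R(G)_r\To{\sim}Z(\End_{RG}(R[G/H]_r))$, a direct factor of $Z(R[H\backslash G/H])$; only through this isomorphism does assumption (1) transfer to the Bernstein center, after which Lemma \ref{lemma_Z_finite} handles general finitely generated $V$. Without this (or an equivalent surjectivity statement), your argument for (1)$\Rightarrow$(2) does not go through.
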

\begin{proof} Assume (2), let $H$ be an open compact subgroup and put $V:=R[G/H]$.
Let us identify ${\rm End}_{RG}(V)=R[H\backslash G/H]$, so that the action of  $\ZG_{R}(G)$ on $V$ is given
  by a morphism $\ZG_{R}(G)\To{} Z(R[H\backslash G/H])$, the image of which we denote by
  $\ZZz_{V}$, as above. Since $V$  is  finitely generated, it is $\ZG$-finite. In particular,
  $V^{H}=R[H\backslash G/H]$ is finitely  generated as a $\ZZz_{V}$-module, hence also as a
  $Z(R[H\backslash G/H])$-module. Moreover, $\ZZz_{V}$ being noetherian, $Z(R[H\backslash
  G/H])$ is also a finitely generated $\ZZz_{V}$-module, hence it is a finitely generated $R$-algebra.

  Now assume (1) and let us prove (2). Recall from  \cite[Appendix]{datfinitude} the
  decomposition  $\Rep_{R}(G)=\prod_{r}\Rep_{R}(G)_{r}$ according to depth, in the sense of Moy and
  Prasad.
  If $H$ is any open pro-$p$-subgroup of $G$, we thus have a canonical decomposition
  of $R[G/H]$ as a sum of mutually orthogonal subrepresentations
  $R[G/H]=\bigoplus_{r} R[G/H]_{r}$. Then the ring $R[H\backslash G/H]= {\rm
    End}_{RG}(R[G/H])$ and its center $Z(R[H\backslash G/H])$ decompose accordingly, and
  we have
  $$Z(R[H\backslash G/H])_{r}=Z({\rm End}_{RG}(R[G/H]_{r})).$$ 
  Let us fix a depth $r$. Since the factor subcategory $\Rep_{R}(G)_{r}$
  is generated by a finitely generated projective generator, we can find $H$
  such that $R[G/H]_{r}$ is such a (finitely generated) projective generator of
  $\Rep_{R}(G)_{r}$.  It follows that 
  the map $\ZG_{R}(G)\To{}{\rm End}_{RG}(R[G/H]_{r})$ induces an isomorphism
  $$\ZG_{R}(G)_{r}\To{\sim}Z({\rm End}_{RG}(R[G/H]_{r})).$$
  The two displayed isomorphisms above and our assumption (1) imply that $R[G/H]_{r}$ is
  $\ZG$-finite.
  Since it is true for all $r$, the last lemma implies that any finitely generated $V$ is $\ZG$-finite.  
\end{proof}

\begin{lemma}
  \label{lemma_reduc_Zl}
Let $R$ be a noetherian ring with $p\in R^{\times}$ and let $R'$ be a noetherian $R$-algebra.
\begin{enumerate}
\item If Theorem \ref{thm_finiteness} holds for $R$, then it holds for $R'$.
\item If Theorem \ref{thm_finiteness} holds for $R'$ and $R'$ is faithfully flat over $R$, then it holds for $R$.
\end{enumerate}

\end{lemma}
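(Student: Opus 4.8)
The plan is to prove both items one compact open subgroup $H$ at a time, after reformulating the conclusion of Theorem~\ref{thm_finiteness}. For a Noetherian ring $S$ (with $p$ invertible) write $\mathcal H_S := S[H\backslash G/H]$; this is $S$-free on the double cosets, with multiplication constants independent of $S$, so $\mathcal H_{S'}=S'\otimes_S\mathcal H_S$ canonically for every $S$-algebra $S'$. I would first record the elementary equivalence: Theorem~\ref{thm_finiteness} holds for $S$ if and only if, for each $H$, the ring $\mathcal H_S$ is module-finite over \emph{some} central finitely generated $S$-subalgebra $C$. Indeed, such a $C$ is Noetherian, so $\mathcal H_S$ is a Noetherian $C$-module; hence its $C$-submodule $Z(\mathcal H_S)$ is module-finite over $C$ and therefore a finitely generated $S$-algebra, and $\mathcal H_S$ is module-finite over $C\subseteq Z(\mathcal H_S)$. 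The converse is trivial (take $C=Z(\mathcal H_S)$).

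For (1): fix $H$ and assume $\mathcal H_R=\sum_{s=1}^nZ(\mathcal H_R)x_s$ with $Z(\mathcal H_R)$ a finitely generated $R$-algebra. Let $C'$ be the image of the natural $R'$-algebra map $R'\otimes_RZ(\mathcal H_R)\to\mathcal H_{R'}$. It is a finitely generated $R'$-algebra, and it is central in $\mathcal H_{R'}=R'\otimes_R\mathcal H_R$ (an element central in $\mathcal H_R$ commutes, after base change, with the generators $1\otimes m_i$, hence with all of $\mathcal H_{R'}$); moreover $\mathcal H_{R'}=\sum_sC'(1\otimes x_s)$, so $\mathcal H_{R'}$ is module-finite over $C'$. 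By the criterion, Theorem~\ref{thm_finiteness} holds for $R'$. Note that no flatness hypothesis is used here.

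For (2): fix $H$ and assume $R\to R'$ faithfully flat and Theorem~\ref{thm_finiteness} for $R'$. The crucial — and, I expect, only genuinely technical — step is the identity
\[ Z(\mathcal H_{R'}) \;=\; R'\otimes_R Z(\mathcal H_R) \]
inside $\mathcal H_{R'}=R'\otimes_R\mathcal H_R$; the inclusion "$\supseteq$" is the map of (1), which is injective because $R'$ is flat and $Z(\mathcal H_R)\hookrightarrow\mathcal H_R$. To prove "$\subseteq$" I would note that $Z(\mathcal H_R)$ is the set of solutions in $\mathcal H_R=\bigoplus_iRm_i$ of the $R$-linear commutation conditions $\sum_i(c^k_{ij}-c^k_{ji})b_i=0$ coming from $m_im_j=\sum_kc^k_{ij}m_k$, and likewise over $R'$; restricting to finitely supported solutions and using that $R'\otimes_R-$ commutes with the filtered colimit $\mathcal H_R=\bigcup_{S\text{ finite}}\bigoplus_{i\in S}Rm_i$, one reduces to showing $R'\otimes_R\ker(R^S\to\prod_{(j,k)}R)=\ker(R'^S\to\prod_{(j,k)}R')$ for the relevant $R$-linear map and a finite set $S$. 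Since $R$ is Noetherian, the image $N$ of $R^S\to\prod_{(j,k)}R$ is a finitely generated submodule (of a product of copies of $R$); as $\Hom_R(N,R)$ is then finitely generated, finitely many of the coordinate projections $\pi_{(j,k)}|_N$ span it, hence separate the points of $N$, so the solution module is the kernel of a map between \emph{finite} free $R$-modules, with which flat base change commutes. Granting the identity, $\mathcal H_{R'}=R'\otimes_R\mathcal H_R$ is module-finite over $Z(\mathcal H_{R'})=R'\otimes_RZ(\mathcal H_R)$, which is a finitely generated $R'$-algebra; faithfully flat descent of module-finiteness (along the base change $Z(\mathcal H_R)\to R'\otimes_RZ(\mathcal H_R)$ of $R\to R'$) shows $\mathcal H_R$ is module-finite over $Z(\mathcal H_R)$, and faithfully flat descent of "finite type" (along $R\to R'$) shows $Z(\mathcal H_R)$ is a finitely generated $R$-algebra. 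By the criterion, Theorem~\ref{thm_finiteness} holds for $R$.

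In summary, both items reduce to the criterion of the first paragraph together with standard faithfully flat descent; the one point requiring care is the base-change identity $Z(\mathcal H_{R'})=R'\otimes_RZ(\mathcal H_R)$ in (2), which I expect to be the main obstacle — though over the rings of primary interest (the $\ZZ_\ell$-algebras, all flat over $\ZZ_\ell$) it is essentially immediate.
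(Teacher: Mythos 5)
Your proposal is correct and follows essentially the same route as the paper: base change of the Hecke algebra $R'\otimes_R R[H\backslash G/H]\simeq R'[H\backslash G/H]$, the induced map on centers for (1) without any flatness, and faithfully flat descent of module-finiteness and of finite type (the paper cites the corresponding Stacks Project lemmas) for (2). The only difference is that you supply a detailed (and correct, using noetherianity of $R$ and the finite-support/filtered-colimit structure of the Hecke algebra) justification of the identification $Z(R'[H\backslash G/H])=R'\otimes_R Z(R[H\backslash G/H])$ for flat $R'$, which the paper simply asserts; your packaging via the ``central finitely generated subalgebra'' criterion is likewise just a reformulation of the paper's argument.
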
\begin{proof}
 (1)  Suppose   Theorem \ref{thm_finiteness} holds over $R$.  The isomorphism
$R'\otimes_{R}R[H\backslash G/H]\To\sim R'[H\backslash G/H]$
shows that $R'[H\backslash G/H]$ is a finitely generated module over
$R'\otimes_{R}Z(R[H\backslash G/H])$. It also induces a 
morphism
$R'\otimes_{R} Z(R[H\backslash G/H])\To{} Z(R'[H\backslash G/H])$
through which  the action of
$R'\otimes_{R}Z(R[H\backslash G/H])$ on $R'[H\backslash G/H]$ factors. Hence
$R'[H\backslash G/H]$ is a fortiori a finitely generated module over $Z(R'[H\backslash G/H])$.
Moreover, since $R'\otimes_{R}Z(R[H\backslash G/H])$ is noetherian, $Z(R'[H\backslash
G/H])$ is also a finitely generated $R'\otimes_{R}Z(R[H\backslash G/H])$-module, so
it is a finitely generated $R'$-algebra because $Z(R[H\backslash G/H])$ is a finitely
generated $R$-algebra by assumption.

(2) Since $R'$ is flat over $R$, the map $R'\otimes_{R} Z(R[H\backslash G/H])\To{}
Z(R'[H\backslash G/H])$ is an isomorphism. Since it is even faithfully flat, the finite
type property of $Z(R'[H\backslash G/H])$ as a $R'$-algebra implies that of $Z(R[H\backslash G/H])$
as an $R$-algebra, see \cite[\href{https://stacks.math.columbia.edu/tag/00QP}{Lemma 00QP}]{stacks-project}. For the same reason, the finite type property of $R'[H\backslash
G/H]$  
as a $Z(R'[H\backslash G/H])$-module implies that of $R[H\backslash G/H]$ as a
$Z(R[H\backslash G/H])$-module, see \cite[\href{https://stacks.math.columbia.edu/tag/03C4}{Lemma 03C4}]{stacks-project}.
\end{proof}

In the next lemma, we use our notation $\Zlb':=\Zlb[\sqrt q]$ from the
introduction. Recall that $i_{P}$ denotes normalized parabolic induction. Although at this
point normalization is not important, it will be more convenient in
the main argument of Section~\ref{subsec_proof_main}. 

\begin{lemma} \label{lemma_embedding_cuspidal}
  Let $Q$ be a finitely generated projective $\Zlb' G$-module. Then there is an embedding
  $Q\hookrightarrow V$ with $V$ of the form $V=\bigoplus_{P} i_{P}(W_{P})$ where $P$ runs
  among a finite set of parabolic subgroups of $G$, and each $W_{P}$ is a \emph{cuspidal and
    finitely generated} $\ell$-torsion free $\Zlb' M_{P}$-module.
\end{lemma}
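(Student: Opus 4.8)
The plan is to embed $Q$ first into a representation of the form $\Zlb'[G/H]$ for $H$ a suitable open pro-$p$-subgroup, then to control $\Zlb'[G/H]$ using Bernstein's decomposition \emph{over $\Ql$} and the fact that every object there is a subquotient of inductions of cuspidals, finally transporting back to an integral statement. Concretely, since $Q$ is finitely generated projective it is a direct summand of some $\Zlb'[G/H]^{\oplus n}$ with $H$ an open compact subgroup, and after shrinking $H$ we may take $H$ to be an open pro-$p$-subgroup (so that $\Zlb'[G/H]$ is projective and $\ell$-torsion free). Thus it suffices to embed $\Zlb'[G/H]$ itself into a $V$ of the desired shape, and it further suffices to do this after inverting $\ell$ if we can arrange the resulting $\Ql$-embedding to carry the lattice $\Zlb'[G/H]$ into a lattice of the target which is itself of the desired integral form; this last point is where the $\ell$-torsion-free hypothesis on the $W_{P}$ is used and is the technical heart of the argument.

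The main step is the construction over $\Ql$. By Bernstein's theory, $\Ql[G/H]$ decomposes as a finite direct sum over the (finitely many) Bernstein components meeting it, and each block $\Rep_{\Ql}(G)_{[M,\sigma]}$ has the property that every finitely generated object embeds into a finite direct sum of objects $i_{P}(\tau)$ with $\tau$ a finitely generated object of the cuspidal block $\Rep_{\Ql}(M)_{[M,\sigma]}$: indeed parabolic induction $i_{P}$ of a cuspidal progenerator of that block is a progenerator of $\Rep_{\Ql}(G)_{[M,\sigma]}$, hence is injective relative to that block up to taking enough copies, and any finitely generated object is cogenerated by such injectives. This produces an embedding $\Ql[G/H]\hookrightarrow\bigoplus_{P}i_{P}(W_{P,\Ql})$ with each $W_{P,\Ql}$ a finitely generated cuspidal $\Ql M_{P}$-module. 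Now intersect: inside $i_{P}(W_{P,\Ql})$ take $W_{P}$ to be the $\Zlb' M_{P}$-submodule of $W_{P,\Ql}$ generated by the images under (the adjoint of) the embedding of a finite generating set of the lattice $\Zlb'[G/H]$, together with its $M_{P}$-translates needed to make it a subobject; equivalently, use that $i_{P}$ commutes with $-\otimes_{\Zlb'}\Ql$ and that the preimage lattice in $i_{P}(W_{P,\Ql})$ of $\Zlb'[G/H]$ is contained in $i_{P}(W_{P})$ for $W_{P}:=W_{P,\Ql}\cap(\text{a suitable }\Zlb'\text{-lattice})$. One checks $W_{P}$ is finitely generated over $\Zlb' M_{P}$ (a lattice in a finitely generated module), $\ell$-torsion free (a submodule of a $\Ql$-vector space), and cuspidal (its image in any proper Jacquet module vanishes after inverting $\ell$, hence vanishes since the Jacquet module is again $\ell$-torsion free by exactness of Jacquet functors and torsion-freeness considerations).

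The main obstacle I expect is the bookkeeping in the previous step: one must arrange that the \emph{same} lattice $\Zlb'[G/H]$ that we want to embed lands inside the integral induction $\bigoplus_P i_P(W_P)$, rather than merely inside $\bigoplus_P i_P(W_{P,\Ql})$, and that the $W_P$ so obtained remain finitely generated. The clean way is to define $W_P$ as the image of $\Zlb'[G/H]$ under the $P$-component of the embedding composed with the counit $i_P r_P \to \mathrm{id}$ in the appropriate direction—more precisely, to use that the embedding $Q\hookrightarrow \bigoplus_P i_P(W_{P,\Ql})$ is adjoint to a family of maps $r_P(Q_{\Ql})\to W_{P,\Ql}$, and to let $W_P$ be the $\Zlb'$-span of the image of the integral Jacquet module $r_P(Q)$ (which is finitely generated since $r_P$ preserves finite generation and $Q$ is finitely generated). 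Then $W_P$ is automatically a finitely generated $\Zlb' M_P$-module, it is $\ell$-torsion free as a submodule of $W_{P,\Ql}$, cuspidality is inherited from $W_{P,\Ql}$, and by adjunction the original map $Q\to i_P(W_{P,\Ql})$ factors through $i_P(W_P)$; summing over $P$ gives the required embedding $Q\hookrightarrow\bigoplus_P i_P(W_P)$.
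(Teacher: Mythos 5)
Your final construction is correct, and its key step differs from the paper's in an interesting way. Both arguments start the same: over $\Ql$, use Bernstein's theory to place $Q\otimes\Ql$ inside a finite direct sum of modules $i_P(W_{P,\Ql})$ with $W_{P,\Ql}$ finitely generated cuspidal. Where you diverge is in descending to an integral statement. The paper chooses, for each block, a supercuspidal $\pi$ with integral central character, fixes an $\OC_{E_\pi}$-lattice $L_\pi\subset\pi$, and then rescales the embedding by a power of $\ell$ (possible because $Q$ is finitely generated) so that $Q$ lands in $\bigoplus_P i_P\bigl(L_\pi\otimes_{\Zlb}\OC_E[M/M^0]\bigr)=\bigoplus_P i_P\bigl(\ind_{M^0}^{M}((L_\pi)_{|M^0})\bigr)$. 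You instead take $W_P$ to be the image of the integral Jacquet module $r_P(Q)$ under the map $r_P(Q)\to W_{P,\Ql}$ adjoint to the $P$-component of the embedding, and then use naturality of Frobenius reciprocity to factor $Q\to i_P(W_{P,\Ql})$ through $i_P(W_P)$. This works: $r_P$ preserves finite generation (Iwasawa decomposition), so $W_P$ is finitely generated; it is $\ell$-torsion free as a submodule of a $\Ql$-vector space; it is cuspidal because submodules of cuspidal modules are cuspidal (exactness of Jacquet functors); and the factored map $Q\to\bigoplus_P i_P(W_P)$ is still injective since it becomes the original embedding after composing with $i_P(W_P)\hookrightarrow i_P(W_{P,\Ql})$. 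Your route avoids choosing lattices in cuspidals and the $\ell$-power rescaling altogether, at the cost of a slightly less explicit description of the $W_P$.

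There is, however, one incorrect justification in your middle paragraph that should be replaced. You assert that the progenerator $i_P(\tau)$ of a block is ``injective relative to that block up to taking enough copies'' and that every finitely generated object is cogenerated by it. Neither is true: the endomorphism ring of the progenerator is (a twisted form of) a Laurent polynomial algebra, free modules over it are torsion-free over the center, so no finite-length object (e.g.\ an irreducible unramified principal series in the Iwahori block of $\GL_2$) embeds into a finite direct sum of copies of the progenerator. Fortunately you do not need this general claim: the object you apply it to, $Q\otimes\Ql$ (or $\Ql[G/H]$), is finitely generated and \emph{projective}, hence is a direct summand of a finite direct sum of Bernstein progenerators $i_P(\pi\otimes\Ql[M/M^0])$ — which is exactly the paper's opening move and gives the required embedding into $\bigoplus_P i_P(W_{P,\Ql})$. (Alternatively, for an arbitrary finitely generated object $V$ of a block one can use the unit maps $V\to i_{P'}(r_{P'}V)$ over the parabolics $P'$ with Levi in the class of $M$, whose direct sum is injective since any nonzero subobject of $V$ has a nonzero such Jacquet module; but that is a different argument from the one you wrote.) With that repair, your proof is complete.
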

\begin{proof}
  The $\Ql G$-module $Q\otimes\Ql$ is projective and finitely generated, so it is a direct
  factor of a finite direct sum of projective generators of Bernstein blocks of
  $\Rep_{\Ql}(G)$. By Bernstein's theory, such projective generators can be taken of the
  form $i_{P}(\pi\otimes\Ql[M/M^{0}])$, where
  \begin{itemize}
  \item $P$ is a parabolic subgroup of $G$ with Levi component $M$, 
  \item $\pi$ is a supercuspidal irreducible $\Ql M$-module, and
  \item $M^{0}$ is the subgroup of $M$ generated by compact elements.
  \end{itemize}
  Note that $\pi$ can be defined over a finite extension $E_{\pi}$ of $\Qlb$.  Moreover, we may and will
  choose $\pi$ so that the central character of $\pi$ takes values in $\Zl$, in which case, $\pi$
  admits an $M$-invariant lattice which can be defined over $\OC_{E_{\pi}}$. More
  precisely, there is an admissible and cyclic $\OC_{E_{\pi}}M$-module $L_{\pi}$ such that
  $\pi\simeq L_{\pi}\otimes_{\OC_{E_{\pi}}} \Ql$.

  So $Q\otimes\Ql$ is contained in a direct sum of representations of the form
  $i_{P}(L_{\pi}\otimes_{\Zlb}\Ql[M/M^{0}])$. Since $Q$ is finitely generated, it follows that there is a finite extension
  $E$ of $\Qlb$, containing $\sqrt q$ and such that $Q\otimes\Qlb$ is contained in a direct sum of representations of the form
  $i_{P}(L_{\pi}\otimes_{\Zlb} E[M/M^{0}])$. For the same reason, after maybe scaling the
  embedding by a power of $\ell$, we see that $Q$ is contained in a direct sum of representations of the form
  $i_{P}(L_{\pi}\otimes_{\Zlb} \OC_{E}[M/M^{0}])$. But  $L_{\pi}\otimes_{\Zlb}
  \OC_{E}[M/M^{0}]=\ind_{M^{0}}^{M}((L_{\pi})_{|M^{0}})$ is a finitely generated cuspidal
  $\Zlb M$-module, as desired.
\end{proof}

\subsection{Proof of Theorem \ref{thm_Z-admissible}} \label{subsec_proof_main}
We now start over the proof of Theorem \ref{thm_Z-admissible} as outlined in the
introduction. Thanks to Lemmas \ref{lemma_equivalence_main_thms} and \ref{lemma_reduc_Zl},
we may assume $R=\Zlb'$, and thanks to  Lemmas
 \ref{lemma_Z_finite} and 
\ref{lemma_embedding_cuspidal}, we are reduced to proving that if  $V$ is a $\Zlb'
G$-module of the
form $V=i_{P}(W)$ where $P=MU$ is a parabolic subgroup and $W$ is a finitely generated
cuspidal $\ell$-torsion free $\Zlb' M$-module, then $V$ is $\ZG$-finite. As in the introduction, denote by
$Z_{M}$ the maximal central torus of $M$. We know by \cite[Lemme 4.2]{datfinitude}
that $W$ is admissible as a $\ZG_{\Zlb'}(Z_{M}) M$-module.

Now we use Fargues and Scholze excursion theorem as explained in the introduction. We
refer to Definition  VIII.3.4 of \cite{FarguesScholze} for the excursion algebra and to 
\cite[Thm VIII.4.1]{FarguesScholze} and \cite[Thm IX.0.1]{FarguesScholze} for the
construction of the map ${\rm FS}_{G}$.  
The compatibility with parabolic induction (property (1) in the introduction) is proved in
\cite[Cor. IX.7.3]{FarguesScholze}. However, there, ordinary parabolic induction is used
while the map on the excursion side is twisted by a cyclotomic central cocycle. Using
normalized parabolic induction cancels out this cyclotomic twist. 
The compatibility ``with central characters''
(property (2)) follows
from  \cite[Thm IX.6.2]{FarguesScholze} and
  \cite[Thm IX.6.1]{FarguesScholze} applied to the isogeny $M_{\rm der}\times
  Z_{M}\longrightarrow M$. The fact that ${\rm FS}_{Z_{M}}$ is an  isomorphism (property
  (3)) follows from  \cite[Prop IX.6.5]{FarguesScholze}. Finally, the continuity property
  (property (4)) follows from \cite[Prop IX.5.1]{FarguesScholze} (and in fact is an
  important ingredient of the construction of the map ${\rm FS}_{G}$).

  Now, we return to our $V=i_{P}(W)$. The smooth $\Zlb' M$-module $W$ is
  admissible over $\ZG_{\Zlb'}(Z_{M})$, hence, by property (3) and property (2), it is admissible over
  $\Exc(W_{F},\HM)_{\Zlb'}$. Since induction preserves admissibility, it follows that $V$ is
  admissible as a $\Exc(W_{F},\HM)_{\Zlb'}G$-module. Moreover, property (4) tells us that the
  action of  $\Exc(W_{F},\HM)_{\Zlb'}$ on $W$ factors through some
  $\Exc(W_{F}^{0}/P_{F}^{e},\HM)_{\Zlb'}$. Since $W$ is $\ell$-torsion free, and since the
  nilradical of $\Exc(W_{F}^{0}/P_{F}^{e},\HM)_{\Zlb'}$ is $\ell$-torsion
  (because $\Exc(W_{F}^{0}/P_{F}^{e},\HM)_{\Zlb}[\frac 1\ell]\simeq
  \OC(Z^{1}(W_{F}^{0}/P_{F}^{e},\HM)_{\Zlb})^{\HM}[\frac 1\ell]$ which is reduced), this action
  actually factors through $\Exc(W_{F}^{0}/P_{F}^{e},\HM)_{\Zlb',\rm red}$.
  It then follows from property (1) that there is $e\in\NM$ such that the action of $\Exc(W_{F},\HG)_{\Zlb'}$ on $V$
  factors through $\Exc(W_{F}^{0}/P_{F}^{e},\HG)_{\Zlb', \rm red}$. We can now apply Corollary
  \ref{cor_parameters_finiteness}, which says that  $\Exc(W_{F}^{0}/P_{F}^{e},\HM)_{\Zlb',\rm red}$ is finite over
  $\Exc(W_{F}^{0}/P_{F}^{e},\HG)_{\Zlb', \rm red}$.  This implies that $V$ is admissible over
  $\Exc(W_{F}^{0}/P_{F}^{e},\HG)_{\Zlb'}$ and, a fortiori, that it is admissible over
  $\ZG_{\Zlb'}(G)$.

  It remains to prove that the image $\ZG_{V}$ of the map $\ZG_{\Zlb'}(G)\To{}\End_{\Zlb' G}(V)$
  is a finitely generated $\Zlb'$-algebra. Denote by $\EC_{V}\subset \ZG_{V}$ the image of
  $\Exc(W_{F},\HG)_{\Zlb'}$ in $\End_{\Zlb' G}(V)$. We know that $\EC_{V}$ is a finitely generated
  $\Zlb'$-algebra, hence it is noetherian and it suffices to prove that  $\End_{\Zlb' G}(V)$ is
  a finitely generated module over $\EC_{V}$. Note that  $\End_{\Zlb' G}(V)
  = \End_{\EC_{V}G}(V)$.
  Suppose we can find an open compact subgroup $H$ such that the restriction map
  $\End_{\EC_{V} G}(V)\To{}\End_{\EC_{V}}(V^{H})$ is injective, then we are done since  
  $V^{H}$ is a finitely generated $\EC_{V}$-module. In order to find such an $H$, observe
    that $V$ belongs to some bounded depth category
  $\Rep_{\Zlb'}(G)_{\leq r}$, since $V\otimes\Qlb$ is finitely generated and contains $V$, which is
  $\ell$-torsion free. So it suffices to pick $H$ such that some finitely generated
  projective generator of $\Rep_{\Zlb'}(G)_{\leq r}$ is generated by its
  $H$-invariants. Indeed, the $\Zlb' G$-module $V$ is then generated by $V^{H}$, so the
  restriction map $\End_{\EC_{V} G}(V)\To{}\End_{\EC_{V}}(V^{H})$ is injective.

  \medskip

  We end this subsection by noting that the above proof actually shows the following :

  \begin{corollary}  \emph{(of the proof)} \label{cor_Z_finite_over_E}
    \begin{enumerate}
    \item Any finitely generated smooth $\Zlb' G$-module is admissible
      over   $\Exc(W_{F},\HG)_{\Zlb'}$ (through ${\rm
        FS}_{G}$).
    \item For any $r>0$, there is $e\in\NM$ such that the composition  $\Exc(W_{F},\HG)
      \To{} \ZG_{\Zlb'}(G)_{\leq r}$
      factors  over $\Exc(W_{F}^{0}/P_{F}^{e},\HG)_{\rm red}$
      and makes $\ZG_{\Zlb'}(G)_{\leq r}$ a finite module over
      $\Exc(W_{F}^{0}/P_{F}^{e},\HG)_{\Zlb',\rm red}$.
    \end{enumerate}
    
  \end{corollary}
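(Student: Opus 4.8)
The plan is to extract both statements from the proof of Theorem~\ref{thm_Z-admissible} just given, the only genuinely new ingredient being a stability property for the class of finitely generated smooth $\Zlb'G$-modules that are \emph{$\Exc$-admissible}, meaning that $V^{H}$ is a finitely generated $\Exc(W_{F},\HG)_{\Zlb'}$-module (via ${\rm FS}_{G}$) for every compact open subgroup $H$. First I would record that this class is closed under subobjects and quotients. The key mechanism is the continuity property~(4): the action of $\Exc(W_{F},\HG)_{\Zlb'}$ on any finitely generated smooth $\Zlb'G$-module factors through some $\Exc(W_{F}^{0}/P_{F}^{e},\HG)_{\Zlb'}$, which is a finitely generated $\Zlb'$-algebra, hence noetherian; so for such a module, ``finitely generated over $\Exc(W_{F},\HG)_{\Zlb'}$'' can be tested over a fixed noetherian quotient. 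For a subobject $V'\subseteq V$ this is immediate, as $V'^{H}\subseteq V^{H}$ and submodules of finitely generated modules over a noetherian ring are finitely generated. For a quotient $V$ of a finitely generated projective $Q$ and a compact open $H$, choose an open pro-$p$ subgroup $H'\subseteq H$; since $\ell\neq p$ the functor $(-)^{H'}$ is exact on $\Rep_{\Zlb'}(G)$, so $Q^{H'}\twoheadrightarrow V^{H'}$; picking a level $e$ through which the actions on both $Q$ and $V$ factor, $V^{H'}$ is a quotient of a finitely generated $\Exc(W_{F}^{0}/P_{F}^{e},\HG)_{\Zlb'}$-module, hence so is its submodule $V^{H}$.

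Part~(1) follows by assembling known facts: Lemma~\ref{lemma_embedding_cuspidal} embeds any finitely generated projective $Q$ into a finite direct sum $\bigoplus_{P}i_{P}(W_{P})$ with each $W_{P}$ cuspidal, finitely generated and $\ell$-torsion free; the proof of Theorem~\ref{thm_Z-admissible} shows each $i_{P}(W_{P})$ is $\Exc$-admissible; by the subobject stability above, $Q$ is $\Exc$-admissible; and since every finitely generated smooth $\Zlb'G$-module is a quotient of a finitely generated projective one — e.g.\ a finite direct sum of copies of $\Zlb'[G/H]$ for an open pro-$p$ subgroup $H$, which is projective because $(-)^{H}$ is exact — the quotient stability finishes the argument.

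For part~(2), fix $r>0$. As in the proof of Lemma~\ref{lemma_equivalence_main_thms}, choose an open pro-$p$ subgroup $H$ so that $V_{0}:=\Zlb'[G/H]_{\leq r}$ is a finitely generated projective generator of $\Rep_{\Zlb'}(G)_{\leq r}$, whence $\ZG_{\Zlb'}(G)_{\leq r}\risom Z(\End_{\Zlb'G}(V_{0}))$. By part~(1) together with the continuity property~(4), the action of $\Exc(W_{F},\HG)_{\Zlb'}$ on $V_{0}$ factors through $\Exc(W_{F}^{0}/P_{F}^{e},\HG)_{\Zlb'}$ for some $e$; and since $\ZG_{\Zlb'}(G)_{\leq r}$ is a direct factor of the reduced ring $\ZG_{\Zlb'}(G)$, the composite $\Exc(W_{F},\HG)\To{}\ZG_{\Zlb'}(G)_{\leq r}$ factors through $A:=\Exc(W_{F}^{0}/P_{F}^{e},\HG)_{\Zlb',\rm red}$. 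Now $A$ is a finitely generated $\Zlb'$-algebra, hence noetherian; $V_{0}^{H}$ is a finitely generated $A$-module by the $\Exc$-admissibility of $V_{0}$; and $V_{0}$ is generated over $G$ by $V_{0}^{H}$, so the restriction map $\End_{\Zlb'G}(V_{0})=\End_{AG}(V_{0})\hookrightarrow\End_{A}(V_{0}^{H})$ is injective. As $\End_{A}(V_{0}^{H})$ is a finitely generated $A$-module, so is $\End_{\Zlb'G}(V_{0})$, and a fortiori its subring $\ZG_{\Zlb'}(G)_{\leq r}$; this is the asserted finiteness.

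I expect the only delicate point to be the treatment of an arbitrary compact open $H$ in the stability lemma: since $\Exc(W_{F},\HG)_{\Zlb'}$ is itself a projective limit and not obviously noetherian, one must systematically descend to a finite-level quotient $\Exc(W_{F}^{0}/P_{F}^{e},\HG)_{\Zlb'}$ using property~(4), and reduce $H$-invariants to $H'$-invariants for an auxiliary open pro-$p$ subgroup $H'$, where exactness of $(-)^{H'}$ over $\Zlb'$ is available. Everything else is a rearrangement of steps already present in the proof of Theorem~\ref{thm_Z-admissible}.
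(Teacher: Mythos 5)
Your argument is correct and is essentially the paper's intended one: the statement is presented as a corollary of the proof of Theorem \ref{thm_Z-admissible}, and you extract it in the same way — closure of $\Exc$-admissibility under subobjects and quotients by descending to the noetherian finite-level algebras $\Exc(W_{F}^{0}/P_{F}^{e},\HG)_{\Zlb'}$, the embedding of finitely generated projectives into $\bigoplus_{P}i_{P}(W_{P})$, and, for (2), the identification $\ZG_{\Zlb'}(G)_{\leq r}\cong Z(\End_{\Zlb'G}(\Zlb'[G/H]_{\leq r}))$ combined with injectivity of restriction to $H$-invariants, exactly as at the end of the paper's proof. The few details you add (exactness of $(-)^{H'}$ for pro-$p$ $H'$, reducedness of $\ZG_{\Zlb'}(G)$ and hence of its depth $\leq r$ factor) are points the paper uses implicitly or asserts elsewhere, so there is no gap.
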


 We also have the following easy converse to Theorem
 \ref{thm_Z-admissible}.
 \begin{remark}
   Let $V$ be a $\ZG$-finite  $RG$-module of bounded depth. Then $V$
   is finitely generated.
 \end{remark}
 \begin{proof}
   It suffices to prove this when $V\in\Rep_{R}(G)_{r}$ for some
   depth $r$. Let $P_{r}$ be a finitely generated projective generator of
   $\Rep_{R}(G)_{r}$. It is generated by its $H$-invariants for a sufficiently small open
   pro-$p$-subgroup. Therefore, any object in $\Rep_{R}(G)_{r}$ is also generated by its
   $H$-invariants. In particular $V$ is generated by $V^{H}$, and it follows that any
   generating set of $V^{H}$ as a  $R[H\backslash G/H]$-module is a generating set as an
   $RG$-module.
   But our assumption says that $V^{H}$ is a  finite $\ZG_{R}(G)$-module, hence it is a fortiori
   a finite $R[H\backslash G/H]$-module. 
 \end{proof}




\section{Second Adjointness} \label{sec:consequences}

We now study consequences of the above results for parabolic induction and restriction.  Let $R$ be a noetherian $\ZZ[\frac{1}{p}]$-algebra, and let $P=MU$ be a parabolic subgroup of $G$.  It is an easy consequence of the Bernstein-Deligne description of $\ZG_{\overline{\QQ}}(G)$ in \cite{BD} that one has a unique map:
$$\ZG_{\QQ}(G) \rightarrow \ZG_{\QQ}(M)$$
such that for any smooth $\QQ M$-module $V$, one has a commutative diagram:
     $$\xymatrix{
       \ZG_{\QQ}(G) \ar[r] \ar[d]
       & {\rm End}_{\QQ G}(I_{P}V)
       \\
       \ZG_{\QQ}(M) \ar[r] &
       {\rm End}_{\QQ M}(V) \ar[u]_{I_{P}}.
     }$$

In fact, it is not hard to deduce a similar result over $\ZZ[\frac{1}{p}],$ or indeed over any flat $\ZZ[\frac{1}{p}]$-algebra $R$:

\begin{theorem} \label{thm_bernstein_induction}
Let $R$ be a noetherian flat $\ZZ[\frac{1}{p}]$-algebra.  Then there is a unique map $\ZG_R(G) \rightarrow \ZG_R(M)$ such that for any smooth $R M$-module $V$, one has a commutative diagram:
     $$\xymatrix{
       \ZG_R(G) \ar[r] \ar[d]
       & {\rm End}_{R G}(I_{P}V)
       \\
       \ZG_R(M) \ar[r] &
       {\rm End}_{R M}(V) \ar[u]_{I_{P}}.
     }$$
\end{theorem}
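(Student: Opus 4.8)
The statement over $R$ should be deduced from the known statement over $\QQ$ (coming from Bernstein–Deligne) by a flat base-change argument, together with the uniqueness assertion to glue things together. First I would treat uniqueness: if two maps $\ZG_R(G)\to\ZG_R(M)$ both make the square commute for all smooth $RM$-modules $V$, then their difference kills $\ZG_R(G)$ after composing with $\ZG_R(M)\to\End_{RM}(V)$ for every $V$; taking $V$ to range over a family of modules (e.g. the $R[M/H']$ for $H'$ open pro-$p$ in $M$, or the projective generators of the depth-$r$ blocks) whose endomorphism algebras detect the center faithfully forces the two maps to agree. So it suffices to produce \emph{one} such map.

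\emph{Existence.} The strategy is to write $\ZG_R(G)=\prod_r \ZG_R(G)_{\le r}$ (and similarly for $M$) along the depth decomposition from \cite{datfinitude}, and to build the map in each bounded-depth piece. Fix a depth bound $r$. Over $\QQ$ we already have the Bernstein–Deligne map $\ZG_\QQ(G)\to\ZG_\QQ(M)$ making the square commute; restricting to bounded depth and using that $\ZG_\QQ(G)_{\le r}=\ZG_{\ZZ[1/p]}(G)_{\le r}\otimes_{\ZZ[1/p]}\QQ$ (the center of a bounded-depth block is a finitely generated $\ZZ[1/p]$-algebra by the results of the previous section — this is exactly where Theorem \ref{thm_finiteness} enters) one checks that the $\QQ$-map carries the integral center into the integral center, giving $\ZG_{\ZZ[1/p]}(G)_{\le r}\to\ZG_{\ZZ[1/p]}(M)_{\le r}$. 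Concretely: the image of a central element $z\in\ZG_{\ZZ[1/p]}(G)_{\le r}$ under the $\QQ$-map lies in $\ZG_\QQ(M)_{\le r}$ and one must see it lies in the integral subring; this follows because its action on any $R[M/H']$ with $R$ a $\ZZ[1/p]$-algebra is forced by the commuting square applied to $I_P(R[M/H'])$, whose endomorphism ring is an honest Hecke algebra over $R$, so the operator is defined integrally. Then tensor with $R$: define the $R$-map as $\ZG_R(G)_{\le r}=\ZG_{\ZZ[1/p]}(G)_{\le r}\otimes R\to\ZG_{\ZZ[1/p]}(M)_{\le r}\otimes R\to\ZG_R(M)_{\le r}$, the last arrow being the natural base-change comparison (this is where flatness of $R$ over $\ZZ[1/p]$ is used, to identify $\ZG_{\ZZ[1/p]}(-)_{\le r}\otimes R$ with $\ZG_R(-)_{\le r}$, or at least to get a map). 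Assembling over all $r$ gives $\ZG_R(G)\to\ZG_R(M)$.

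\emph{Commutativity of the square over $R$.} For a smooth $RM$-module $V$, one wants the square to commute. Reduce to $V$ of bounded depth $r$ (both $\ZG$'s and both parabolic functors respect the depth decomposition, and a general $V$ is a sum of its depth pieces). For fixed $r$ it suffices to check commutativity after composing with $\End_{RG}(I_P V)\hookrightarrow \End_R((I_PV)^H)$ for $H$ small enough that a projective generator of $\Rep_R(G)_{\le r}$ is $H$-generated — as in the proof of Theorem \ref{thm_Z-admissible} this map is injective. That reduces the identity to one between explicit operators on finitely generated $R$-modules, which is the base change of the corresponding identity over $\ZZ[1/p]$ (valid by construction) along $\ZZ[1/p]\to R$; here flatness again ensures the base-changed diagram is faithful enough to conclude.

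\emph{Main obstacle.} The delicate point is the very first move: showing the Bernstein–Deligne map over $\QQ$ actually lands in the integral center $\ZG_{\ZZ[1/p]}(M)$, i.e.\ that ``central elements over $\ZZ[1/p]$ go to central elements over $\ZZ[1/p]$''. This is not formal — it relies on the fact (from Section \ref{sec:finit-group-side}) that the bounded-depth centers are finitely generated $\ZZ[1/p]$-algebras and behave well under base change, so that an operator that is integral ``pointwise'' on all $R[M/H']$ is genuinely integral. Once that integrality is in hand, everything else is a bookkeeping exercise in flat base change and the faithfulness of $V\mapsto V^H$ on bounded-depth subcategories.
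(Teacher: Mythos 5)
Your overall scaffolding (depth decomposition, reduction to $V=\ZZ[\frac1p][M/H]_r$, flat base change to a general flat $R$, formal uniqueness) matches the paper's route, but there is a genuine gap precisely at the step you yourself flag as the main obstacle: showing that the Bernstein--Deligne element $z_M\in\ZG_{\QQ}(M)_r$ attached to $z\in\ZG_{\ZZ[\frac1p]}(G)_r$ actually preserves the integral lattice $\ZZ[\frac1p][M/H]_r$. What you know a priori is only that $I_P z_M$ coincides with the action of $z$ on $I_P\,\ZZ[\frac1p][M/H]_r$ and hence is integral; but integrality of $I_P z_M$ does not formally imply integrality of $z_M$, and your phrase ``whose endomorphism ring is an honest Hecke algebra over $R$, so the operator is defined integrally'' asserts exactly the point that needs proof (note also that invoking the commuting square over $R=\ZZ[\frac1p]$ at this stage would be circular, since that square is what is being constructed). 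The paper supplies the missing descent argument: choose an integer $a$ prime to $p$ with $a z_M\in\ZG_{\ZZ[\frac1p]}(M)_r$, and use that for every $\ell\neq p$ the map $I_P\colon \End_{\FF_{\ell}M}(\FF_{\ell}[M/H]_r)\to\End_{\FF_{\ell}G}(I_P\FF_{\ell}[M/H]_r)$ is injective (faithfulness of $I_P$), so that $\ell$-divisibility of $I_P x$ in $\End(I_P\,\ZZ[\frac1p][M/H]_r)$ forces $\ell$-divisibility of $x$; applied to $x=a z_M$ this shows $z_M$ itself is an (automatically central) integral endomorphism. Some argument of this kind is indispensable and is absent from your proposal.

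Moreover, the justification you offer in its place is not available in this paper: you appeal to Theorem \ref{thm_finiteness} to claim that the bounded-depth centers are finitely generated $\ZZ[\frac1p]$-algebras, but that theorem is proved only for noetherian $\Zlb$-algebras, and the introduction explicitly states that finiteness over general noetherian $\ZZ[\frac1p]$-algebras is not established here. In fact the paper's proof of Theorem \ref{thm_bernstein_induction} uses no finiteness input at all --- only the Bernstein--Deligne description over $\QQ$, depth preservation of parabolic functors, faithfulness of $I_P$ modulo $\ell$, and flat base change of centers of endomorphism rings of progenerators (which is also all you need to identify $\ZG_{\QQ}(G)_r$ with $\ZG_{\ZZ[\frac1p]}(G)_r\otimes\QQ$, without any finite generation statement). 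The remaining parts of your argument (reduction to depth pieces, checking commutativity on $R[M/H]_r$ via projective resolutions or invariants, and uniqueness) are fine and essentially as in the paper.
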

\begin{proof}
We first prove this for $R = \ZZ[\frac{1}{p}]$. Recall from \cite[Thm 5.2]{MoyPrasad} and \cite[5.12]{Vig96} that the parabolic induction and restriction functors preserve depth, so that for each depth $r$, the map $\ZG_{\QQ}(G) \rightarrow \ZG_{\QQ}(M)$ takes $\ZG_{\QQ}(G)_r$ into $\ZG_{\QQ}(M)_r$. We begin by showing that this map takes $\ZG_{\ZZ[\frac{1}{p}]}(G)_r$ to $\ZG_{\ZZ[\frac{1}{p}]}(M)_r$.  To do so we identify $\ZG_{\ZZ[\frac{1}{p}]}(M)_r$ with the center of the endomorphism ring of $\ZZ[\frac{1}{p}][M/H]_r$ for some sufficiently small subgroup $H$ of $M$.  Note that the action of $z \in \ZG_{\ZZ[\frac{1}{p}]}(G)_r$ on $I_P \QQ[M/H]_r$ is given by $I_P z_M$ for some $z_M \in \ZG_{\QQ}(M)_r$, and it suffices to show that $z_M$ lies in $\ZG_{\ZZ[\frac{1}{p}]}(M)_r$; that is, that it preserves the $\ZZ[\frac{1}{p}][M]$-submodule $\ZZ[\frac{1}{p}][M/H]_r$ of $\QQ[M/H]_r$.  Certainly for a sufficiently large integer $a$ not divisible by $p$, the element $a z_M$ of $\ZG_{\QQ}(M)_r$ lies in $\ZG_{\ZZ[\frac{1}{p}]}(M)_r$, and the action of $a z_G$ on $I_P \Zlb[M/H]_r$ is given by $I_P a z_M.$

On the other hand, for any prime $\ell \neq p$, the map 
$$I_P: \End_{\FF_{\ell} M}(\FF_{\ell}[M/H]_r) \rightarrow \End_{\FF_{\ell} G}(I_P \FF_{\ell}[M/H]_r)$$
is injective, as $I_P$ is faithful.  This implies that if $x$ is an endomorphism of $\ZZ[\frac{1}{p}][M/H]_r$ such that $I_P x$ is divisible by $\ell$ in the endomorphism ring of $I_P \ZZ[\frac{1}{p}][M/H]_r,$ then $x$ is divisible by $\ell$ in the endomorphism ring of $\ZZ[\frac{1}{p}][M/H]_r.$  In particular it follows that $z_M$ gives rise to a (necessarily central) endomorphism of $\ZZ[\frac{1}{p}][M/H]_r,$ and thus that $z_M$ lies in $\ZG_{\ZZ[\frac{1}{p}]}(M).$

Now let $V$ be a smooth $\ZZ[\frac{1}{p}] M$-module, and let $V_r$ denote its depth $r$ summand, for each $r$.  It suffices to verify the commutativity of the diagram for each $r$, and thus, since $V_r$ admits a projective resolution by direct sums of copies of $\ZZ[\frac{1}{p}][M/H]_r$, it suffices to check the commutativity of the diagram in the case $V = \ZZ[\frac{1}{p}][M/H]_r.$  In this case the commutativity of the diagram is a direct consequence of the construction in the previous paragraphs.

We finally turn to the case of $R$ a flat $\ZZ[\frac{1}{p}]$-algebra.  In such a case we have natural isomorphisms:
$$\ZG_R(M)_r \cong Z(\End_{R M}(R[M/H]_r)) \cong Z(\End_{\ZZ[\frac{1}{p}] M}(\ZZ[\frac{1}{p}][M/H]_r) \otimes_{\ZZ[\frac{1}{p}]} R)$$
and since $R$ is flat the right hand side is isomorphic to $\ZG_{\ZZ[\frac{1}{p}]}(M) \otimes_{\ZZ[\frac{1}{p}]} R$.  We have a similar morphism with $G$ in place of $M$, and we take the map $\ZG_R(G) \rightarrow \ZG_R(M)$ to be the one obtained, via these identifications, by base change of the map $\ZG_{\ZZ[\frac{1}{p}]}(G) \rightarrow \ZG_{\ZZ[\frac{1}{p}]}(M).$  The commutativity of the diagram can be verified as above, by reducing to the case of $V = R[M/H]_r,$ where it is clear.
\end{proof}

Note that if the ring $R$ contains a square root of $q$ (whose choice must be fixed), then one has an exactly analogous result with normalized parabolic induction in place of unnormalized.

If $R$ is not flat over $\ZZ[\frac{1}{p}]$, then it is not clear that the natural map $\ZG_{\ZZ[\frac{1}{p}]}(G) \otimes_{\ZZ[\frac{1}{p}]} R \rightarrow \ZG_R(G)$ is surjective (although we expect that it is).  For such $R$ one certainly has a map $\ZG_{\ZZ[\frac{1}{p}]}(G) \otimes_{\ZZ[\frac{1}{p}]} R \rightarrow \ZG_R(M)$ making the diagram of Theorem~\ref{thm_bernstein_induction} commute, but it is not clear that this map descends to a map from $\ZG_R(G)$ to $\ZG_R(M)$.

When $R$ is a $\Zlb$-algebra for some $\ell$, we can combine Theorem~\ref{thm_bernstein_induction} with the results of the previous section to prove:

\begin{lemma}
Let $R$ be a noetherian $\Zlb$-algebra.  For each $r$, the ring $\ZG_R(G)_r$ is finitely generated as a $\ZG_{\Zlb}(G)_r \otimes_{\Zlb} R$-module.
\end{lemma}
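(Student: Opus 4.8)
The plan is to bootstrap from the finiteness of the Fargues--Scholze morphism recorded in Corollary~\ref{cor_Z_finite_over_E}(2), after reducing to a setting where that morphism is available and everything base-changes cleanly. The structural input used throughout is that, by the already-established Theorem~\ref{thm_finiteness}, for a sufficiently small open pro-$p$ subgroup $H\subset G$ the depth-truncated Hecke algebra $E_{R}:=\End_{RG}(R[G/H]_{\leq r})$ is a \emph{finite} module over its center $\ZG_{R}(G)_{\leq r}$, and that center is a finite-type $R$-algebra; in particular $E_{R}$ and its enveloping algebra $E_{R}\otimes_{R}E_{R}^{\op}$ are noetherian, so (via $\ZG_{R}(G)_{\leq r}=\Hom_{E_{R}\otimes_{R}E_{R}^{\op}}(E_{R},E_{R})$ with $E_{R}$ finitely presented over this algebra) the formation of $\ZG_{R}(G)_{\leq r}$ commutes with flat base change in $R$. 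Since both sides of the assertion split as finite products over the depths $\leq r$ compatibly with the comparison map $\ZG_{\Zlb}(G)\otimes_{\Zlb}R\to\ZG_{R}(G)$, it suffices to prove the analogous finiteness for $\ZG_{R}(G)_{\leq r}$ over $\ZG_{\Zlb}(G)_{\leq r}\otimes_{\Zlb}R$.

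First I would reduce to the case where $R$ is a $\Zlb'$-algebra. Put $R':=R\otimes_{\Zlb}\Zlb'$, a noetherian $\Zlb'$-algebra that is finite and faithfully flat over $R$. By the flat base change property above, $\ZG_{R'}(G)_{\leq r}=\ZG_{R}(G)_{\leq r}\otimes_{R}R'$, while $\ZG_{\Zlb}(G)_{\leq r}\otimes_{\Zlb}R'=(\ZG_{\Zlb}(G)_{\leq r}\otimes_{\Zlb}R)\otimes_{R}R'$ by associativity of tensor; so if the lemma holds for $R'$ then $\ZG_{R}(G)_{\leq r}\otimes_{R}R'$ is module-finite over $(\ZG_{\Zlb}(G)_{\leq r}\otimes_{\Zlb}R)\otimes_{R}R'$, and faithfully flat descent of finite generation of modules (\cite[\href{https://stacks.math.columbia.edu/tag/03C4}{Lemma 03C4}]{stacks-project}) gives the lemma for $R$. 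When $R$ is a $\Zlb'$-algebra one moreover has $\ZG_{\Zlb}(G)_{\leq r}\otimes_{\Zlb}R=\ZG_{\Zlb'}(G)_{\leq r}\otimes_{\Zlb'}R$, again by flat base change.

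So assume $R$ is a noetherian $\Zlb'$-algebra. I would choose $H$ so that $\Zlb'[G/H]_{\leq r}$ is a finitely generated projective generator of $\Rep_{\Zlb'}(G)_{\leq r}$; then, writing $E_{\Zlb'}:=\End_{\Zlb'G}(\Zlb'[G/H]_{\leq r})$ and $E_{R}:=E_{\Zlb'}\otimes_{\Zlb'}R$, base change of this Morita picture gives $\Rep_{R}(G)_{\leq r}\simeq\mathrm{Mod}_{E_{R}^{\op}}$ and $\ZG_{R}(G)_{\leq r}=Z(E_{R})$. Pick $e\in\NM$ as in Corollary~\ref{cor_Z_finite_over_E}(2): over $\Zlb'$, the Fargues--Scholze map restricted to depth $\leq r$ factors through $\mathcal{E}^{e}:=\Exc(W_{F}^{0}/P_{F}^{e},\HG)$ and makes $\ZG_{\Zlb'}(G)_{\leq r}$ — hence also $E_{\Zlb'}$ — a finite module over $\mathcal{E}^{e}_{\Zlb'}$ (it suffices to use $\mathcal{E}^{e}_{\Zlb'}$ itself, since the reduced excursion algebra appearing there is one of its quotient rings). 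The crucial point is that $\mathcal{E}^{e}$ is a genuine finite-type $\ZZ[\frac{1}{p}]$-algebra, so it base-changes harmlessly: tensoring with $R$ over $\Zlb'$, $E_{R}$ becomes a finite module over the noetherian ring $\mathcal{E}^{e}_{R}:=\mathcal{E}^{e}\otimes_{\ZZ[\frac{1}{p}]}R$, with structure map landing in $Z(E_{R})=\ZG_{R}(G)_{\leq r}$. Being an $\mathcal{E}^{e}_{R}$-submodule of the finite $\mathcal{E}^{e}_{R}$-module $E_{R}$, the center $\ZG_{R}(G)_{\leq r}$ is therefore a finite $\mathcal{E}^{e}_{R}$-module. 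Finally, the base-changed Fargues--Scholze map $\mathcal{E}^{e}_{R}\to\ZG_{R}(G)_{\leq r}$ factors, by construction, through the natural comparison map $\ZG_{\Zlb'}(G)_{\leq r}\otimes_{\Zlb'}R\to\ZG_{R}(G)_{\leq r}$; so the image of $\mathcal{E}^{e}_{R}$ in $\ZG_{R}(G)_{\leq r}$ lies inside the image of $\ZG_{\Zlb'}(G)_{\leq r}\otimes_{\Zlb'}R$, and since $\ZG_{R}(G)_{\leq r}$ is finite over the smaller subring it is a fortiori finite over the larger one, i.e. finite as a $\ZG_{\Zlb'}(G)_{\leq r}\otimes_{\Zlb'}R=\ZG_{\Zlb}(G)_{\leq r}\otimes_{\Zlb}R$-module. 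Projecting onto the depth-$r$ factor concludes.

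The main difficulty here is not any single deep input — Corollary~\ref{cor_Z_finite_over_E} supplies the essential finiteness — but the base-change bookkeeping: the Bernstein centre $\ZG(G)$ is not of finite type, so one cannot tensor it up directly, and the argument must be routed through objects that \emph{do} behave well under base change, namely the depth-truncated Hecke algebras (which are module-finite over their centers precisely by Theorem~\ref{thm_finiteness}) and the finite-type excursion algebra $\mathcal{E}^{e}$. The part that needs care is verifying that the depth decomposition, the Fargues--Scholze morphism, and the identifications of $\ZG_{R}(G)_{\leq r}$ with the center of $\End_{RG}(R[G/H]_{\leq r})$ are all genuinely compatible with the base changes $\ZZ[\frac{1}{p}]\to\Zlb\to\Zlb'\to R$ invoked above.
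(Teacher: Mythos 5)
Your proof is correct, but it takes a substantially more roundabout route than the paper's, which is a two-line base change argument: the paper identifies $\ZG_R(G)_r$ with the center of $\End_{RG}(R[G/H]_r)$ for $H$ sufficiently small, invokes Theorem~\ref{thm_Z-admissible} \emph{over $\Zlb$ itself} to say that $\Zlb[G/H]_r$ is admissible over $\ZG_{\Zlb}(G)_r$, and then notes that since invariants under a compact open pro-$p$ subgroup form a direct summand, $R[G/H]_r=\Zlb[G/H]_r\otimes_{\Zlb}R$ is admissible over the noetherian ring $\ZG_{\Zlb}(G)_r\otimes_{\Zlb}R$; the center, being a submodule of the finite module $(R[G/H]_r)^H$, is then finite. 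You instead re-import the excursion algebra through Corollary~\ref{cor_Z_finite_over_E}(2), which is only stated over $\Zlb'$ and therefore forces the extra scaffolding: reduction to $\Zlb'$-algebras, the flat-base-change identification of centers via $Z(E_R)=\Hom_{E_R\otimes_R E_R^{\op}}(E_R,E_R)$ (legitimate, since Theorem~\ref{thm_finiteness} makes $E_R$ finitely presented over the noetherian ring $E_R\otimes_R E_R^{\op}$), and faithfully flat descent of module-finiteness. All of these steps check out, but they are avoidable: Theorem~\ref{thm_Z-admissible} is already available over every noetherian $\Zlb$-algebra, no square root of $q$ is needed, and it packages exactly the excursion-algebra finiteness you re-derive via $\mathcal{E}^e$. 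In the end both arguments conclude by the same mechanism --- the center sits inside a depth-truncated Hecke module that is finite over a noetherian ring acting through the comparison map --- so what your detour buys is only the (already known) remark that $\ZG_R(G)_{\leq r}$ is finite over the image of the finite-type $R$-algebra $\mathcal{E}^e\otimes_{\ZZ[\frac{1}{p}]}R$, at the cost of several pages of base-change bookkeeping that the paper's choice of input renders unnecessary.
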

\begin{proof}
We identify $\ZG_R(G)_r$ with the center of the endomorphism ring of $R[G/H]_r$ for some sufficiently small subgroup $H$ of $G$.  Since $\Zlb[G/H]_r$ is admissible over $\ZG_{\Zlb}(G)_r$ by Theorem \ref{thm_Z-admissible}, $R[G/H]_r$ is admissible over $\ZG_{\Zlb}(G)_r \otimes_{\Zlb} R$, and the result follows.
\end{proof}

Combining this with Theorem~\ref{thm_bernstein_induction} we obtain:

\begin{theorem} \label{thm_bernstein_finiteness}
Let $R$ be a noetherian $\Zlb$-algebra.
The map $\ZG_{\Zlb}(G)_r \otimes_{\Zlb} R \rightarrow \ZG_R(M)_r$ makes $\ZG_R(M)_r$ into a finitely generated $\ZG_{\Zlb}(G)_r \otimes_{\Zlb} R$-module.
\end{theorem}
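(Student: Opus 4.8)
The plan is to reduce the statement, by means of the previous lemma applied to $M$ and the Bernstein induction map of Theorem~\ref{thm_bernstein_induction}, to the finiteness of $\ZG_{\Zlb}(M)_r$ as a module over $\ZG_{\Zlb}(G)_r$, and then to prove that finiteness by transporting the parameter-side results of Corollaries~\ref{cor_parameters_finiteness} and \ref{cor_Z_finite_over_E} across the Fargues--Scholze morphisms. For the reduction: applying the previous lemma to $M$ in place of $G$ shows that $\ZG_R(M)_r$ is finitely generated over $\ZG_{\Zlb}(M)_r\otimes_{\Zlb}R$, and the map of the statement factors as
$$\ZG_{\Zlb}(G)_r\otimes_{\Zlb}R\longrightarrow\ZG_{\Zlb}(M)_r\otimes_{\Zlb}R\longrightarrow\ZG_R(M)_r,$$
the first arrow being the base change of the Bernstein map $\ZG_{\Zlb}(G)_r\to\ZG_{\Zlb}(M)_r$ (Theorem~\ref{thm_bernstein_induction} applies over $\Zlb$, flat over $\ZZ[\frac{1}{p}]$) and the second the natural one; the choice of normalization here is immaterial, as it only introduces a central-character twist. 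So it suffices to show $\ZG_{\Zlb}(M)_r\otimes_{\Zlb}R$ is finite over $\ZG_{\Zlb}(G)_r\otimes_{\Zlb}R$, and by base change along $\Zlb\to R$ this follows from the case $R=\Zlb$, i.e.\ from the claim that $\ZG_{\Zlb}(M)_r$ is a finite $\ZG_{\Zlb}(G)_r$-module.

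To prove this claim I would pass to $\Zlb':=\Zlb[\sqrt q]$, which is finite free, hence faithfully flat, over $\Zlb$, and satisfies $\ZG_{\Zlb'}(\,\cdot\,)_r=\ZG_{\Zlb}(\,\cdot\,)_r\otimes_{\Zlb}\Zlb'$; faithfully flat descent of module-finiteness (as in Lemma~\ref{lemma_reduc_Zl}(2)) then reduces to showing $\ZG_{\Zlb'}(M)_r$ is finite over $\ZG_{\Zlb'}(G)_r$. The essential input is the commutativity of the square
$$\xymatrix{
\Exc(W_{F},\HG)_{\Zlb'}\ar[r]^{{\rm FS}_{G}}\ar[d] & \ZG_{\Zlb'}(G)\ar[d]\\
\Exc(W_{F},\HM)_{\Zlb'}\ar[r]^{{\rm FS}_{M}} & \ZG_{\Zlb'}(M)
}$$
in which the left vertical map is pushforward of $1$-cocycles along $\HM\hookrightarrow\HG$ and the right one the normalized Bernstein map: for every smooth $\Zlb'M$-module $W$, the operator ${\rm FS}_{G}(x)$ acts on $i_{P}W$ as $i_{P}$ of the action of its Bernstein image, and by property~(1) of the Fargues--Scholze morphism (with normalized induction, so as to cancel the cyclotomic twist) it also acts on $i_{P}W$ as $i_{P}$ of the action of ${\rm FS}_{M}$ of the pushforward of $x$; faithfulness of $i_{P}$ then forces these two elements of $\ZG_{\Zlb'}(M)$ to agree. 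Granting this, fix the depth $r$: by Corollary~\ref{cor_Z_finite_over_E}(2) applied to $G$ and to $M$, and Corollary~\ref{cor_parameters_finiteness}(2) applied to the $W_{F}$-stable Levi $\HM\subset\HG$, choose $e\in\NM$ large enough that $\ZG_{\Zlb'}(G)_{\leq r}$ is finite over $A_{G}:=\Exc(W_{F}^{0}/P_{F}^{e},\HG)_{\Zlb',{\rm red}}$, that $\ZG_{\Zlb'}(M)_{\leq r}$ is finite over $A_{M}:=\Exc(W_{F}^{0}/P_{F}^{e},\HM)_{\Zlb',{\rm red}}$, and that $A_{M}$ is finite over $A_{G}$. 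Then $\ZG_{\Zlb'}(M)_{\leq r}$ is finite over $A_{G}$, and by the commutative square the resulting $A_{G}$-module structure factors through $\ZG_{\Zlb'}(G)_{\leq r}$; hence $\ZG_{\Zlb'}(M)_{\leq r}$ is a finite $\ZG_{\Zlb'}(G)_{\leq r}$-module. Since parabolic induction preserves depth, this is compatible with the decompositions by depth, so projecting to the depth-$r$ factor yields that $\ZG_{\Zlb'}(M)_r$ is finite over $\ZG_{\Zlb'}(G)_r$, which via the reductions above completes the argument.

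The step I expect to be the main obstacle is the commutativity of that square: reconciling the modulus twist $\delta_{P}^{1/2}$ built into the normalized Bernstein map with the cyclotomic central cocycle appearing in the Fargues--Scholze compatibility with unnormalized parabolic induction, and then handling the uniformity in $e$ (one must choose a single $e$ that works for $G$, for $M$, and for the parameter-side finiteness, using that these finiteness statements persist for larger $e$) along with the attendant noetherianity bookkeeping, since $\Exc(W_{F}^{0}/P_{F}^{e},-)_{\Zlb'}$ is a finitely generated $\Zlb'$-algebra whereas $\Exc(W_{F},-)_{\Zlb'}$ is only an inverse limit of such.
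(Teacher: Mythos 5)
Your proposal is correct and takes essentially the same route as the paper: reduce to $R=\Zlb$ via the preceding lemma (applied with $M$ in place of $G$), then obtain finiteness of $\ZG_{\Zlb}(M)_r$ over $\ZG_{\Zlb}(G)_r$ by combining Corollary \ref{cor_Z_finite_over_E}(2) with Corollary \ref{cor_parameters_finiteness}(2) through the Fargues--Scholze compatibility with (normalized) parabolic induction. The paper's proof is just a two-line citation of these ingredients; your write-up fills in the details it leaves implicit (the commutativity of the excursion/Bernstein square via faithfulness of $i_P$, the passage between $\Zlb$ and $\Zlb[\sqrt q]$, and the choice of a single level $e$).
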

\begin{proof}
  The previous lemma allows us to reduce to the case $R = \Zlb.$ In this case, the claim
  follows from Corollary \ref{cor_Z_finite_over_E} and Corollary
  \ref{cor_parameters_finiteness} (2).

\end{proof}

With these results in hand we now turn to the question of establishing second adjointness.  The surprising thing is that in spite of relying on the finiteness theorems of the previous sections, which require one to work over $\Zlb$ for some $\ell$, our results suffice to establish second adjointness over $\ZZ[\frac{1}{p}]$.  Our basic approach is closely related to that used to establish second adjointness in section 11 of~\cite{HelmBC}. We begin by showing:

\begin{lemma} \label{lem_admissible_restriction}
Let $P = MU$ be a parabolic subgroup of $G$, and $H$ a compact open subgroup of $G$.  Then the parabolic restriction $R_P \Zlb[G/H]_r$ is admissible over $\ZG_{\Zlb}(G)$.
\end{lemma}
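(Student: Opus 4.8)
Fix a depth $r$. The plan is to establish two facts about $R_P\Zlb[G/H]_r$: first, that it is finitely generated as a $\Zlb M$-module; second, that the action of $\ZG_{\Zlb}(G)$ on it factors through the canonical map $\phi\colon\ZG_{\Zlb}(G)\To{}\ZG_{\Zlb}(M)$ of Theorem~\ref{thm_bernstein_induction}, which is available here because $\Zlb$ is flat over $\ZZ[\frac{1}{p}]$. Granting both, the admissibility assertion follows formally by combining Theorem~\ref{thm_Z-admissible} and Theorem~\ref{thm_bernstein_finiteness} applied to the group $M$.

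For the first point, $\Zlb[G/H]_r$ is a direct summand of $\Zlb[G/H]$ by the Moy--Prasad depth decomposition, so $R_P\Zlb[G/H]_r$ is a direct summand of $R_P\Zlb[G/H]$. Now $R_P\Zlb[G/H]$ is, up to a twist by a smooth character of $M$, the module of $U$-coinvariants $\Zlb[G/H]_U=\Zlb[U\backslash G/H]$, with $M$ acting by left translation. By the Iwasawa decomposition the set $P\backslash G/H$ is finite, and it is exactly the set of $M$-orbits on $U\backslash G/H$ (since $UM=P$); hence $\Zlb[U\backslash G/H]\cong\bigoplus_{x\in P\backslash G/H}\Zlb[M/\Gamma_x]$, where $\Gamma_x$, the stabiliser of the coset $UxH$, is an open subgroup of $M$ (it contains the open compact subgroup $M\cap xHx^{-1}$). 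Each summand $\Zlb[M/\Gamma_x]$ is cyclic over $\Zlb M$, so $R_P\Zlb[G/H]$ is finitely generated over $\Zlb M$, and so is its direct summand $R_P\Zlb[G/H]_r$. Finally, since $R_P$ preserves depth, $R_P\Zlb[G/H]_r$ lies in $\Rep_{\Zlb}(M)_r$.

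For the second point, recall that $\phi$ is characterised, using faithfulness of $I_P$, by the identity $z_{I_PW}=I_P(\phi(z)_W)$ for all $z\in\ZG_{\Zlb}(G)$ and all $W\in\Rep_{\Zlb}(M)$, and that $R_P$ is left adjoint to $I_P$ (first adjointness). Given $V\in\Rep_{\Zlb}(G)$ and $z\in\ZG_{\Zlb}(G)$, transport $z$ through the adjunction isomorphism $\Hom_M(R_PV,W)\cong\Hom_G(V,I_PW)$, which is natural in $V$ and in $W$: precomposition by $R_P(z_V)$ on the left corresponds to precomposition by $z_V$ on the right, hence by naturality of $z$ to postcomposition by $z_{I_PW}=I_P(\phi(z)_W)$, hence back through the adjunction to postcomposition by $\phi(z)_W$, hence by naturality of $\phi(z)$ to precomposition by $\phi(z)_{R_PV}$ on the left. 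Taking $W=R_PV$ and evaluating at the identity yields $R_P(z_V)=\phi(z)_{R_PV}$; that is, $\ZG_{\Zlb}(G)$ acts on $R_PV$ through $\phi$ and the tautological action of $\ZG_{\Zlb}(M)$.

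To conclude: by the first point and Theorem~\ref{thm_Z-admissible} applied to $M$, the finitely generated $\Zlb M$-module $R_P\Zlb[G/H]_r$ is $\ZG$-finite over $M$, in particular admissible over $\ZG_{\Zlb}(M)$, and since it lies in $\Rep_{\Zlb}(M)_r$ this action is through $\ZG_{\Zlb}(M)_r$. By Theorem~\ref{thm_bernstein_finiteness} with $R=\Zlb$, the ring $\ZG_{\Zlb}(M)_r$ is a finitely generated module over $\ZG_{\Zlb}(G)_r$ via $\phi$. Combining this with the second point, for every compact open subgroup $H_M\subseteq M$ the $\ZG_{\Zlb}(M)_r$-module $(R_P\Zlb[G/H]_r)^{H_M}$ is finitely generated, hence finitely generated over $\ZG_{\Zlb}(G)_r$, hence over $\ZG_{\Zlb}(G)$; thus $R_P\Zlb[G/H]_r$ is admissible over $\ZG_{\Zlb}(G)$. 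I expect the only genuinely non-formal step to be the second one: one must check that the Bernstein-center map of Theorem~\ref{thm_bernstein_induction}, which by construction records only compatibility with parabolic \emph{induction}, is automatically compatible with the Jacquet functor $R_P$ as well. It is a diagram chase with adjunctions, but to run it cleanly it is best to pass to $\Zlb'=\Zlb[\sqrt q]$ and the normalised functors, so that no modulus characters need to be dragged through the adjunction.
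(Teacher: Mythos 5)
Your proof is correct and takes essentially the same route as the paper's: finite generation of $R_P\Zlb[G/H]_r$ over $\Zlb M$ via the Iwasawa decomposition, hence admissibility over $\ZG_{\Zlb}(M)$ by Theorem~\ref{thm_Z-admissible}, then finiteness of $\ZG_{\Zlb}(M)_r$ over $\ZG_{\Zlb}(G)_r$ from Theorem~\ref{thm_bernstein_finiteness}. The only difference is that you make explicit the adjunction chase identifying the functorially induced $\ZG_{\Zlb}(G)$-action on $R_PV$ with the action through the map of Theorem~\ref{thm_bernstein_induction}, a compatibility the paper's one-line conclusion leaves implicit (and since first adjointness holds for the unnormalised functors, the chase needs no passage to $\Zlb[\sqrt q]$).
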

\begin{proof}
It is an easy consequence of the Iwasawa decomposition that the parabolic restriction of any finitely generated $\Zlb[G]$-module is finitely generated as a $\Zlb[M]$-module; in particular $R_P \Zlb[G/H]_r$ is finitely generated as a $\Zlb[M]$-module, and hence admissible over $\ZG_{\Zlb}(M)$.  Since $\ZG_{\Zlb}(M)$ is finitely generated as a module over $\ZG_{\Zlb}(G)$, the result follows.
\end{proof}

We will use this lemma to establish ``stability'' for the modules $\Zlb[G/H]_r,$ and later deduce stability results for an arbitrary object of $\Rep_{\ZZ[\frac{1}{p}]}(G)_r.$  Let us recall what this means.  Fix $P = MU$ and $\overline{P} = M\overline{U}$ parabolic subgroups of $G$, such that $P \cap \overline{P} = M$, let $\lambda$ be a totally positive central element of $M$, and let $K$ a compact open subgroup of $G$ that is decomposed with respect to $P,\overline{P}$; that is, such that $K = K^- K_M K^+$, where $K_M = K \cap M$, $K^+ = K \cap U$, and $K^- = K \cap \overline{U}$.   Recall that a $\Zlb G$-module $V$ is called $K,P$-stable if there exists an integer $c_{K,P,\lambda} \geq 1$, and a direct sum decomposition $V^K = V^K_0 \oplus V^K_{*}$ such that:
\begin{itemize}
    \item $T_{\lambda} = 1_{K \lambda K}$ acts invertibly on $V^K_{*}$, and
    \item $T_{\lambda}^{c_{K,P,\lambda}}$ annihilates $V^K_0$.
\end{itemize}
We will call such an integer $c_{K,P,\lambda}$ a {\em constant of $K,P,\lambda$-stablility} for $V$, or simply a {\em constant of stability} if we wish to supress the dependence on $K$, $P$ and $\lambda.$  Note that these constants depend on $\lambda$, although the notion of $K,P$-stablility is independent of the choice of $\lambda.$

The key point is the following, 

\begin{lemma} \label{lem_stablility} Let $R$ be a noetherian $\Zlb$-algebra, and let $V$ be an admissible $R G$-module such that $R_P V$ is an admissible $R M$-module.  Then $V$ is $K,P$-stable.
\end{lemma}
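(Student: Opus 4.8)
The plan is to extract the finite-dimensional picture governing the $T_\lambda$-action on $V^K$ and show it behaves like a single operator with finitely many eigenvalues, split into the ``zero'' part and the ``invertible'' part. First I would use the geometric lemma / Iwasawa-type relationship between $V^K$ and $(R_P V)^{K_M}$: since $K$ is decomposed with respect to $(P,\bar P)$, the natural map $V^K \to (R_P V)^{K_M}$ (given by $v \mapsto$ image of $v$ in the Jacquet module, after averaging over $K_M$) intertwines the action of $T_\lambda$ on $V^K$ with the action of the corresponding Hecke operator $t_\lambda$ (multiplication by the image of $\lambda$, suitably normalized) on $(R_P V)^{K_M}$, and moreover this map is injective on the subspace where $T_\lambda$ acts by powers of $\lambda$ that are ``large'' — this is the usual Jacquet-module / Casselman-type statement, valid integrally because $\lambda$ is totally positive so that $1_{K\lambda K}$ is essentially a projector onto the $U$-averaged part for large powers. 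The key structural input is that $t_\lambda$ acts \emph{invertibly} on $(R_P V)^{K_M}$ (it is invertible on all of $R_P V$ since $\lambda \in Z(M)$ acts invertibly there — multiplication by a group element), and that $(R_P V)^{K_M}$ is a \emph{finitely generated} module over the noetherian ring $\ZG_R(M)$, hence over $\ZG_R(G)$ by Theorem~\ref{thm_bernstein_finiteness} (using admissibility of both $V$ and $R_P V$), so in particular $V^K$ and $(R_P V)^{K_M}$ are finitely generated $\ZG_R(G)$-modules.

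Next I would set up the decomposition. Consider the subalgebra $A \subset \End_R(V^K)$ generated by $\ZG_R(G)$ (acting through its image) together with $T_\lambda$; since $V^K$ is a finitely generated $\ZG_R(G)$-module and $\ZG_R(G)$ is noetherian, $A$ is a commutative noetherian ring and $V^K$ is a finitely generated $A$-module, and likewise $T_\lambda$ is integral over $\ZG_R(G)$. The idea is that on $V^K$ the operator $T_\lambda$ satisfies a monic polynomial $f(T_\lambda) = 0$ with coefficients in (the image of) $\ZG_R(G)$; factor $f(X) = X^m g(X)$ where $g(0)$ is a unit times... — more precisely, one uses the map to $(R_P V)^{K_M}$ where $t_\lambda$ \emph{is} invertible, together with the kernel $N$ of $V^K \to (R_P V)^{K_M}$, which is $T_\lambda$-stable and on which $T_\lambda$ is \emph{topologically nilpotent} in the strong sense that some power of $T_\lambda$ kills it (this is the integral Jacquet-module estimate: the kernel is supported in ``bounded depth in the $U$-direction''). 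So $N \subseteq V^K_0$ with $T_\lambda^{c} N = 0$ for an explicit $c$ depending only on $K$ and $\lambda$, while on $V^K/N \hookrightarrow (R_P V)^{K_M}$ the operator $T_\lambda$ is injective and, being a finitely generated module endomorphism that becomes invertible after this embedding into a module where $t_\lambda$ is a unit, one gets that $T_\lambda$ acts invertibly on a complementary summand. Concretely: let $V^K_* := \bigcap_n T_\lambda^n V^K = T_\lambda^{c'} V^K$ (the image stabilizes by noetherianity) and $V^K_0 := \ker T_\lambda^{c'}$; the Fitting-lemma decomposition $V^K = V^K_0 \oplus V^K_*$ holds because $T_\lambda$ is integral over the noetherian ring $\ZG_R(G)$ acting on the finitely generated module $V^K$, and one checks $T_\lambda$ is invertible on $V^K_*$ (surjectivity is by construction; injectivity because on the image of $V^K_*$ in $(R_P V)^{K_M}$ it is injective, and $V^K_* \cap N = 0$ once $c' \ge c$).

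The last step is to control the constant: $c_{K,P,\lambda}$ should be taken to be any integer that simultaneously (i) exceeds the bound after which $T_\lambda^{c}$ annihilates the kernel $N$ — which by the Jacquet-module computation depends only on $K^+ = K\cap U$, $\lambda$, and the group, \emph{not} on $V$ — and (ii) is large enough that $\ker T_\lambda^c = \ker T_\lambda^{c+1}$ on the finitely generated module $V^K$; but actually for the \emph{definition} of stability we only need the existence of \emph{some} such $c \ge 1$ for each $V$, so (ii) is automatic from noetherianity and we do not need uniformity here (uniformity, if wanted later, comes from (i)). I expect the main obstacle to be the integral Jacquet-module estimate in step one — establishing, over $\Zlb$ rather than over a field, that the kernel of $V^K \to (R_P V)^{K_M}$ is annihilated by a fixed power of $T_\lambda$, and that $T_\lambda$ genuinely restricts to an invertible operator on the stable image; this is where one must be careful that ``$\lambda$ totally positive'' makes $1_{K\lambda K}$ contract the $\bar U$-direction and expand the $U$-direction in a way that is compatible with passing to the integral lattice, and where one leans on the admissibility of both $V$ and $R_P V$ to keep everything finitely generated over $\ZG_R(G)$ so that Fitting's lemma applies.
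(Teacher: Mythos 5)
Your overall strategy is the right one (the Casselman-style analysis of $T_\lambda$ on $V^K$ via the map to the Jacquet module), but the two module-theoretic pivots on which your splitting rests are incorrect, so the decomposition $V^K=V^K_0\oplus V^K_*$ --- which is the entire content of stability --- is not actually established. First, ``the image stabilizes by noetherianity'' is false: noetherianity gives the ascending chain condition, not the descending one, and the chain $T_\lambda^nV^K$ need not stabilize (multiplication by $\ell$ on the finitely generated $\Zlb$-module $\Zlb$). Second, the ``Fitting-lemma decomposition \ldots because $T_\lambda$ is integral over the noetherian ring'' is also false: Fitting's lemma needs finite length, and integrality plus finite generation is not a substitute --- the same example ($T=\ell$ acting on $\Zlb$, certainly integral over $\Zlb$) has $\ker T^n\oplus \operatorname{im} T^n=\ell^n\Zlb\neq\Zlb$. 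Third, your route to invertibility of $T_\lambda$ on the complement is a non sequitur: an endomorphism of a finitely generated module that becomes invertible after an equivariant \emph{embedding} into a larger module is injective but need not be surjective ($\Zlb\subset\Qlb$ with $T=\ell$ again).

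The missing representation-theoretic input is the integral form of Jacquet's lemma: for $K$ decomposed with respect to $(P,\overline P)$ and any smooth module over a $\ZZ[\frac1p]$-algebra, the canonical map $V^K\to (R_PV)^{K_M}$ is \emph{surjective}, and it intertwines $T_\lambda$ with a unit multiple of the action of $\lambda$ (the index $[K^+:\lambda K^+\lambda^{-1}]$ is a power of $p$, hence invertible). Granting this, $V^K/N\cong (R_PV)^{K_M}$ carries an honestly invertible $T_\lambda$ (inverse induced by $\lambda^{-1}$); the kernels $\ker T_\lambda^n$ exhaust $N$ and stabilize at some $c$ by ACC, using only that $V^K$ is finitely generated over the noetherian ring $R$ (your detour through $\ZG_R(G)$ and Theorem~\ref{thm_bernstein_finiteness} is unnecessary, since admissibility over $R$ is the hypothesis); and then an elementary check --- not Fitting --- gives $V^K=\ker T_\lambda^{c}\oplus T_\lambda^{c}V^K$ with $T_\lambda$ bijective on the second summand, precisely because $T_\lambda$ is surjective modulo $\ker T_\lambda^{c}$. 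Also, your claim that the exponent killing the kernel depends only on $K,P,\lambda$ and not on $V$ is not true in general (it depends on how deep kernel elements lie in the $U$-filtration); it is harmless here, but uniformity is obtained in the paper by a different mechanism (Lemma~\ref{lem_uniform_constant}). Note finally that the paper does not reprove the lemma at all: it cites Lemmas 11.12 and 11.13 of \cite{HelmBC} and observes that the argument carries over with unnormalized parabolic restriction, and that argument contains exactly the surjectivity/ACC steps your sketch replaces with invalid general nonsense.
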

\begin{proof}
This is proven in the case $G = \GL_n(F)$ in~\cite{HelmBC}, Lemmas 11.12 and 11.13.  The proof carries over, with only minor changes such as the use of non-normalized parabolic restriction in place of normalized, to the current setting.
\end{proof}

In particular we immediately deduce from Lemma~\ref{lem_admissible_restriction} and the above that:

\begin{corollary} For all $H$ and $r$, $\Zlb[G/H]_r$ is $K,P$-stable for any pair $K,P$ such that $K$ is decomposed with respect to $P$.
\end{corollary}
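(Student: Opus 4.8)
The plan is to obtain this immediately from Lemma~\ref{lem_stablility} and Lemma~\ref{lem_admissible_restriction}; the only point that needs care is the choice of noetherian coefficient ring, since $\Zlb[G/H]_r$ is visibly not admissible over $\Zlb$ itself, so one cannot take the ring ``$R$'' of Lemma~\ref{lem_stablility} to be $\Zlb$. First I would note that $V:=\Zlb[G/H]_r$ is a finitely generated $\Zlb G$-module: under the depth decomposition it is a direct summand, hence a quotient, of the cyclic module $\Zlb[G/H]$. By Theorem~\ref{thm_Z-admissible} the module $V$ is therefore $\ZG$-finite. Writing $\ZG_V$ for the image of $\ZG_{\Zlb}(G)$ in $\End_{\Zlb G}(V)$, this says precisely that $\ZG_V$ is a finitely generated --- hence noetherian --- $\Zlb$-algebra and that $V^{H'}$ is a finitely generated $\ZG_V$-module for every compact open $H'\le G$; in other words $V$ is an admissible $\ZG_V G$-module.

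Next I would treat the restriction side. Since the $\ZG_V$-action on $V$ is given through $\End_{\Zlb G}(V)$, applying the parabolic restriction functor $R_P$ equips $R_P V$ with a commuting $\ZG_V$-action, and by functoriality of $R_P$ the $\ZG_{\Zlb}(G)$-module structure on $R_P V$ appearing in Lemma~\ref{lem_admissible_restriction} factors through $\ZG_V$. That lemma asserts that $R_P\,\Zlb[G/H]_r$ is admissible over $\ZG_{\Zlb}(G)$, and hence it is admissible over $\ZG_V$, i.e.\ an admissible $\ZG_V M$-module. Now, with $K$ decomposed with respect to $P$ and $\overline P$, Lemma~\ref{lem_stablility} applies with coefficient ring $\ZG_V$ and with this $V$, and yields that $V=\Zlb[G/H]_r$ is $K,P$-stable, which is exactly the claim.

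I do not expect a genuine obstacle in the argument itself: everything has been arranged by the preceding results, so the ``hard part'' is really bookkeeping rather than a new idea. The one thing that must be got right --- and which is the whole reason the finiteness theorems of this section are invoked here --- is that Theorem~\ref{thm_finiteness}/Theorem~\ref{thm_Z-admissible} is exactly what makes $\ZG_V$ a \emph{noetherian} $\Zlb$-algebra over which $V$ is admissible, so that the two admissibility hypotheses of Lemma~\ref{lem_stablility} (for $V$ and for $R_P V$) can be met simultaneously over a single noetherian coefficient ring.
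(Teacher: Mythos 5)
Your argument is correct and is essentially the paper's own: the corollary is deduced there ``immediately'' from Lemma~\ref{lem_stablility} together with Lemma~\ref{lem_admissible_restriction} and the admissibility of $\Zlb[G/H]_r$ over the depth-$r$ Bernstein center furnished by Theorem~\ref{thm_Z-admissible}. Your explicit choice of the noetherian coefficient ring $\ZG_V$ (equivalently $\ZG_{\Zlb}(G)_r$), through which the center's action on both $V$ and $R_P V$ factors, is exactly the bookkeeping the paper leaves implicit.
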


We will need tools to deduce $K,P$-stablility of other $\ZZ[\frac{1}{p}] G$-modules from that of $\Zlb[G/H]_r$.  We first observe that if $V$ and $W$ are both $K,P$-stable, and $f: V \rightarrow W$ is any morphism, then $f$ maps $V_0$ into $W_0$ and $V_*$ into $W_*$.  In particular any endomorphism of a $K,P$-stable representation $V$ preserves the direct sum decomposition $V^K = V^K_0 \oplus V^K_*$.

From this one immediately deduces that if $f: V \rightarrow W$ is a map of $K,P$-stable modules, then the kernel and cokernel of $f$ are also $K,P$-stable, and the constants of $(K,P,\lambda)$-stability of the kernel and cokernel of $f$ are bounded by those of $V$ and $W$, respectively.  Moreover, if $\{V_i\}_{i \in I}$ is a collection of $\ZZ[\frac{1}{p}] G$-modules that are $K,P$-stable, and the constants of stability of the $V_i$ are uniformly bounded by a constant $c$, then the direct sum and product of the $V_i$ is also $K,P$-stable, with $c$ a constant of stability.

Since every object of $\Rep_{\Zlb}(G)_r$ has a projective resolution by direct sums of copies of $\Zlb[G/H]_r$, it follows that every object of $\Rep_{\Zlb}(G)_r$ is $K,P$-stable, and that if $c_{\ell}$ is a constant of $(K,P,\lambda)$-stability for $\Zlb[G/H]_r$, then it is also a constant of $(K,P,\lambda)$-stability for every object of $\Rep_{\Zlb}(G)_r$.

From this it is not hard to deduce:

\begin{lemma} \label{lem_uniform_constant}
Fix $K$, $P$, and $\lambda$ as above.  There exists an absolute constant $c$ such that for any prime $\ell \neq p$, and any object $V$ of $\Rep_{\Zlb}(G)_r$, the constant $c$ is a constant of $(K,P,\lambda)$-stability for $V$.
\end{lemma}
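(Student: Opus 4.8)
The plan is to exhibit one constant that works over every $\Zlb$ at once by transporting the whole problem down to $\ZZ[\frac1p]$. By the discussion preceding the statement, it suffices to produce a single integer $c$ which is a constant of $(K,P,\lambda)$-stability for one fixed projective generator $P_{0,\ell}:=\Zlb[G/H_0]_r$ of $\Rep_{\Zlb}(G)_r$, simultaneously for all primes $\ell\neq p$; here $H_0$ depends only on $r$ and is chosen small enough that $\ZZ[\frac1p][G/H_0]_r$ is a finitely generated projective generator of $\Rep_{\ZZ[\frac1p]}(G)_r$, so the same holds after any flat base change. Since the depth decomposition and the formation of $K$-invariants are compatible with the flat base change $\ZZ[\frac1p]\to\Zlb$, one has $(\Zlb[G/H_0]_r)^K=M\otimes_{\ZZ[\frac1p]}\Zlb$ with $M:=\ZZ[\frac1p][K\backslash G/H_0]_r$, and the operator $T_\lambda=1_{K\lambda K}$ is already defined on $M$ over $\ZZ[\frac1p]$. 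By exactness of $-\otimes_{\ZZ[\frac1p]}\Zlb$ it is then enough to find one integer $c$ and a $T_\lambda$-stable decomposition $M=M_0\oplus M_*$ of $\ZZ[\frac1p]$-modules with $T_\lambda^{c}(M_0)=0$ and $T_\lambda|_{M_*}$ invertible.

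To build this decomposition I would first observe that $M=V^K$ for $V=\ZZ[\frac1p][G/H_0]_r$, and that post-composition makes $M$ a module over the Hecke algebra $\mathcal H:=\End_{\ZZ[\frac1p]G}(V)$, which is noetherian by Corollary \ref{cor_noetherian}; moreover $M$ is finitely generated over $\mathcal H$, because $V$ is a projective generator and hence the finitely generated module $\ZZ[\frac1p][G/K]_r$ is a quotient of some $V^{N_0}$, so that $M=\Hom_{\ZZ[\frac1p]G}(\ZZ[\frac1p][G/K]_r,V)$ embeds $\mathcal H$-linearly into $\mathcal H^{N_0}$. As $T_\lambda$ acts on $M=V^K$ by pre-composition it commutes with the $\mathcal H$-action, so the ascending chain $K_m:=\ker(T_\lambda^m\colon M\to M)$ consists of $\mathcal H$-submodules of the noetherian $\mathcal H$-module $M$ and therefore stabilizes: $K_{c_0}=K_{c_0+j}$ for all $j\geq0$ and some integer $c_0$ independent of $\ell$. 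I then set $M_0:=K_{c_0}$ and $\bar M:=M/M_0$, so that $T_\lambda$ is injective on $\bar M$.

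The remaining point is that $T_\lambda$ is in fact bijective on $\bar M$, i.e.\ that $C:=\coker(T_\lambda\colon\bar M\to\bar M)$ vanishes, and here I would feed back the results already proved over $\Zlb$. For each $\ell\neq p$ the module $\Zlb[G/H_0]_r$ is $K,P$-stable (Lemmas \ref{lem_admissible_restriction} and \ref{lem_stablility} and the corollary after them), so $(\Zlb[G/H_0]_r)^K=(\Zlb[G/H_0]_r)^K_0\oplus(\Zlb[G/H_0]_r)^K_*$; since $K_{c_0}\otimes\Zlb$ is exactly the eventual kernel of $T_\lambda$ on $(\Zlb[G/H_0]_r)^K$ it equals the summand $(\Zlb[G/H_0]_r)^K_0$, whence $\bar M\otimes\Zlb$ is isomorphic as a $T_\lambda$-module to $(\Zlb[G/H_0]_r)^K_*$, on which $T_\lambda$ acts invertibly. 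Thus $C\otimes_{\ZZ[\frac1p]}\Zlb=0$ for all $\ell\neq p$, and also $C\otimes_{\ZZ[\frac1p]}\QQ=0$ since $C\otimes_{\ZZ[\frac1p]}\Qlb=(C\otimes_{\ZZ[\frac1p]}\Zlb)\otimes_{\Zlb}\Qlb=0$ and $\Qlb$ is faithfully flat over $\QQ$. A short elementary argument then forces $C=0$: any $x\in C$ is killed by some nonzero integer $m$ with no prime factor equal to $p$ (because $C\otimes\QQ=0$); if a prime $\ell$ divides $m$, then $C_{(\ell)}=0$ supplies an integer prime to $\ell$ annihilating $x$, and replacing $m$ by the corresponding gcd strips $\ell$ from $m$; iterating shows $x$ is killed by $1$. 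With $T_\lambda$ bijective on $\bar M$, setting $M_*:=T_\lambda^{c_0}(M)$ the usual Fitting-lemma bookkeeping (using $K_{2c_0}=K_{c_0}$ to see $M_0\cap M_*=0$, and surjectivity of $T_\lambda$ on $\bar M$ to see $M_0+M_*=M$) gives $M=M_0\oplus M_*$ with $T_\lambda^{c_0}(M_0)=0$ and $T_\lambda|_{M_*}$ bijective; base changing along $\ZZ[\frac1p]\to\Zlb$ shows that $c:=c_0$ is a constant of $(K,P,\lambda)$-stability for $(\Zlb[G/H_0]_r)^K$ for every $\ell\neq p$, which proves the lemma.

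The one genuinely global input — and the place I expect the real work to sit — is the reduction to $\ZZ[\frac1p]$ combined with the noetherianity of $\ZZ[\frac1p][H_0\backslash G/H_0]_r$ from Corollary \ref{cor_noetherian}: this is what pins the kernel chain $K_m$ at an $\ell$-independent level $c_0$. Once that is available, the surjectivity of $T_\lambda$ on the quotient $\bar M$ is checked one prime at a time over $\Spec\ZZ[\frac1p]$ by the elementary descent above, so no uniformity in $\ell$ needs to be threaded through the finiteness theorems of the preceding sections; the only mild technical chores are the compatibilities of the depth decomposition and of the functor $(-)^K$ with the flat base change $\ZZ[\frac1p]\to\Zlb$.
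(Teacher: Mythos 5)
There is a genuine circularity in your key step. The stabilization of the kernel chain $K_m=\ker(T_\lambda^m)$ rests on the noetherianity of $\mathcal H=\End_{\ZZ[\frac1p]G}(\ZZ[\frac1p][G/H_0]_r)$, which you take from Corollary \ref{cor_noetherian}. But at the point where Lemma \ref{lem_uniform_constant} is proved, that corollary is not yet available: in the paper it is deduced from second adjointness (Corollary \ref{cor_second_adj}), and second adjointness is exactly what the stability machinery is building towards --- the uniform constant of the present lemma is what allows products of the injectives $I_\ell$ over \emph{varying} $\ell$ to remain $K,P$-stable (via Lemma \ref{lem_resolution} and Corollary \ref{cor_stability}), which feeds into Jacquet's lemma and then into second adjointness. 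The only finiteness results available at this stage are Theorems \ref{thm_finiteness}/\ref{thm_Z-admissible} over noetherian $\Zlb$-algebras; the paper explicitly states that finiteness of Hecke algebras over general $\ZZ[\frac1p]$-algebras is out of reach here and obtains noetherianity of $\ZZ[\frac1p][H\backslash G/H]$ only \emph{after} second adjointness. So the one input you yourself identify as "the place the real work sits" is precisely the unproven (at this point) statement, and the argument is circular. The surrounding bookkeeping (flatness of $\ZZ[\frac1p]\to\Zlb$, vanishing of the cokernel $C$ by checking at $\QQ$ and at each $\Zlb$, the Fitting-type decomposition $M=K_{c_0}\oplus T_\lambda^{c_0}(M)$) is fine, but it all hangs on an $\ell$-independent stabilization level $c_0$ that you have no non-circular way to produce.

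For comparison, the paper's proof sidesteps any global noetherianity: the smallest constant $c_\ell$ of $(K,P,\lambda)$-stability for $\Zlb[G/H]_r$ is shown to coincide with the smallest constant for $\Qlb[G/H]_r$ (using that constants pass to all objects of $\Rep_{\Zlb}(G)_r$ and that $\Zlb[G/H]_r\subseteq\Qlb[G/H]_r$), and then, since $K,P$-stability is purely algebraic, an abstract isomorphism $\Qlb\cong\mathbb{C}$ identifies this with the smallest constant for $\mathbb{C}[G/H]_r$, which manifestly does not depend on $\ell$. If you want to salvage your strategy, you would have to obtain the $\ell$-independent bound on the kernel chain by some such characteristic-zero comparison anyway, at which point you have essentially reproduced the paper's argument and the detour through $\ZZ[\frac1p]$ buys nothing.
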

\begin{proof}
Let $c_{\ell}$ be the smallest positive integer that is a constant of $(K,P,\lambda)$-stability for $\Zlb[G/H]_r$; the discussion of the previous paragraph shows in particular that $c_{\ell}$ is then also a constant of stability for $\overline{\Qlb}[G/H]_r$.  Conversely, since $\Zlb[G/H]_r \subseteq \Qlb[G/H]_r,$ we have that $c_{\ell}$ is the smallest constant of stability for $\overline{\Qlb}[G/H]_r$.  On the other hand, fixing an isomorphism of $\overline{\Qlb}$ with ${\mathbb C}$, and noting that $K,P$-stability is a purely algebraic notion, we find that $c_{\ell}$ is the smallest constant of stability of ${\mathbb C}[G/H]_r$.  In particular $c_{\ell}$ does not depend on $\ell$, and the claim follows.
\end{proof}

We now introduce a duality on $\Rep_{\Zlb}(G).$  For $V$ a smooth $\Zlb G$-module, let $V^{\vee,\ell}$ denote the $G$-smooth elements of $\Hom_{\Zlb}(V, \Qlb/\Zlb).$  Note that we have natural isomorphisms:
$$\Hom_{\Zlb G}(V, W^{\vee,\ell}) \cong \Hom_{\Zlb}(V \otimes_{\Zlb G} W, \Qlb/\Zlb) \cong \Hom_{\Zlb G}(W, V^{\vee,\ell});$$
in particular the functor $V \mapsto V^{\vee,\ell}$ takes projectives to injectives.  Moreover, the ``double dual'' map $V \mapsto (V^{\vee,\ell})^{\vee,\ell}$ is an isomorphism for $V$ a simple $\Zlb G$-module.  From this one deduces that (for $H$ compact open and sufficiently small) $\Zlb[G/H]_r^{\vee,\ell}$ is an injective $\Zlb G$-module that admits every simple object of $\Rep_{\Zlb}(G)_r$ as a submodule.  Since the ``forgetful functor'' from $\Zlb G$-modules to $\ZZ[\frac{1}{p}]$-modules is right adjoint to the exact functor $V \mapsto V \otimes_{\ZZ[\frac{1}{p}]} \Zlb$, we find that $\Zlb[G/H]_r^{\vee,\ell}$ remains injective as a $\ZZ[\frac{1}{p}] G$-module.  We will denote this module by $I_{\ell}$.

\begin{lemma} \label{lem_resolution}
Let $V$ be a simple object of $\Rep_{\ZZ[\frac{1}{p}]}(G)_r$.  Then $V$ embeds in $I_{\ell}$ for some $\ell$.  In particular, every object of $\Rep_{\ZZ[\frac{1}{p}]}(G)_r$ admits an injective resolution by direct products of copies of $I_{\ell}$ for varying $\ell$.
\end{lemma}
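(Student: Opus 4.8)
The plan is to prove the two assertions of Lemma \ref{lem_resolution} in turn. For the first assertion, let $V$ be a simple object of $\Rep_{\ZZ[\frac{1}{p}]}(G)_r$. First I would observe that $V$, being simple, is killed by some maximal ideal $\mathfrak{m}$ of $\ZZ[\frac{1}{p}]$, i.e.\ $V$ is an $\FF_\ell G$-module for some prime $\ell\neq p$ (a simple smooth representation has a central character, and the action of $\ZZ[\frac 1p]$ on $V$ factors through a field which, being a quotient of $\ZZ[\frac1p]$ that is also acted on $\ZZ[\frac1p]$-linearly by a nonzero module, must be $\FF_\ell$; alternatively $V$ admits a quotient by a maximal $\ZZ[\frac1p]G$-submodule, and simplicity forces $\ell V=0$ for some $\ell$, the case $\ell=p$ being excluded since $p\in\ZZ[\frac1p]^\times$). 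Hence $V$ is naturally a simple $\Zlb G$-module after extending scalars along $\FF_\ell\hookrightarrow\Fl$ — more precisely $V\otimes_{\FF_\ell}\Fl$ is a finite direct sum of simple $\Fl G$-modules, so it suffices to embed one such simple $\Fl G$-module, equivalently a simple $\Zlb G$-module, into $I_\ell$ compatibly.

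Next I would invoke the properties of $I_\ell=\Zlb[G/H]_r^{\vee,\ell}$ established just above the lemma: for $H$ sufficiently small (depending on $r$), $\Zlb[G/H]_r$ is a projective generator of $\Rep_{\Zlb}(G)_r$, so its dual $\Zlb[G/H]_r^{\vee,\ell}$ is an injective cogenerator of $\Rep_{\Zlb}(G)_r$, and in particular every simple object of $\Rep_{\Zlb}(G)_r$ embeds into it. Since $V$ (or rather the simple $\Zlb G$-module it gives rise to) lies in $\Rep_{\Zlb}(G)_r$ — it has depth $r$ because it is a subquotient of $\Zlb[G/H]_r\otimes\FF_\ell$, depth being preserved under reduction mod $\ell$ — it embeds into $\Zlb[G/H]_r^{\vee,\ell}$ as a $\Zlb G$-module. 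Finally, since the forgetful functor $\Rep_{\Zlb}(G)\to\Rep_{\ZZ[\frac1p]}(G)$ is exact and the embedding is a $\Zlb G$-linear injection, it is a fortiori a $\ZZ[\frac1p] G$-linear injection, giving $V\hookrightarrow I_\ell$ as $\ZZ[\frac1p] G$-modules. One must take a little care that the simple $\ZZ[\frac1p]G$-module $V$ embeds, not merely some $\Zlb$-form; but $V$ is a $\ZZ[\frac1p]G$-submodule of $V\otimes_{\FF_\ell}\Fl$, which embeds into $(I_\ell\otimes_{\Zlb}\Fl)$, and one arranges the embedding of $V$ directly by choosing a simple constituent of $V\otimes_{\FF_\ell}\Fl$ whose $\Fl G$-embedding into $I_\ell\otimes_{\Zlb}\Fl$ descends — in practice it is cleanest to note $\Hom_{\ZZ[\frac1p]G}(V,I_\ell)\otimes_{\FF_\ell}\Fl\cong\Hom_{\Fl G}(V\otimes\Fl,I_\ell\otimes\Fl)$ is nonzero, hence $\Hom_{\ZZ[\frac1p]G}(V,I_\ell)\neq 0$, and any nonzero map out of the simple $V$ is injective.

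For the second assertion, the goal is to build an injective resolution of an arbitrary $W\in\Rep_{\ZZ[\frac1p]}(G)_r$ using only the objects $I_\ell$ (and their products). Here I would argue that the class $\mathcal{I}$ of all direct products of copies of the various $I_\ell$ is a class of injective objects (each $I_\ell$ is injective in $\Rep_{\ZZ[\frac1p]}(G)$ as noted, and products of injectives are injective), and that it is \emph{cogenerating}: every object $W$ of $\Rep_{\ZZ[\frac1p]}(G)_r$ embeds into some object of $\mathcal{I}$. To see the latter, take the product $\prod_{\varphi} I_{\ell(\varphi)}$ over all pairs $\varphi=(\ell,f)$ with $f\in\Hom_{\ZZ[\frac1p]G}(W,I_\ell)$; the resulting evaluation map $W\to\prod_\varphi I_{\ell(\varphi)}$ is injective because by the first assertion every simple subquotient — in fact it is enough that every \emph{simple submodule}, or more carefully every nonzero element generates a submodule with a simple subquotient detected by some $f$ — no: one argues that the kernel $N$ of the evaluation map has the property that $\Hom_{\ZZ[\frac1p]G}(N,I_\ell)=0$ for all $\ell$, yet if $N\neq 0$ then $N$ has a nonzero finitely generated submodule, which has an irreducible quotient (finitely generated smooth modules have irreducible quotients, using that $p$ is invertible so Zorn applies at the level of $H$-invariants for small $H$), and that irreducible quotient is some simple $V\in\Rep_{\ZZ[\frac1p]}(G)_r$, which embeds in some $I_\ell$ by the first part, contradicting $\Hom(N,I_\ell)=0$ after checking the map $N\twoheadrightarrow V\hookrightarrow I_\ell$ is nonzero on $N$. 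Once $\mathcal I$ is seen to be cogenerating by injectives, a standard construction — take $W\hookrightarrow I^0$ with $I^0\in\mathcal I$, then repeat with the cokernel — yields an injective resolution $0\to W\to I^0\to I^1\to\cdots$ with each $I^j\in\mathcal I$, i.e.\ each $I^j$ a direct product of copies of various $I_\ell$.

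The main obstacle I anticipate is the descent-of-scalars bookkeeping in the first part: ensuring that "$V$ embeds in $I_\ell$" really holds for the original simple $\ZZ[\frac1p]G$-module $V$ and not merely for a geometric companion $V\otimes_{\FF_\ell}\Fl$ or for an $\Fl G$-constituent thereof. The clean route is the $\Hom$-and-base-change argument sketched above: establish $\Hom_{\ZZ[\frac1p]G}(V,I_\ell)\neq 0$ by base-changing to $\Fl$ (where $\Zlb[G/H]_r^{\vee,\ell}$ cogenerates $\Rep_{\Zlb}(G)_r$ and $V\otimes\Fl$ is a nonzero object of depth $r$), and then use that a nonzero map out of a simple module is injective. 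A secondary, more routine, point is to confirm that every \emph{nonzero} object of $\Rep_{\ZZ[\frac1p]}(G)_r$ has a simple subquotient lying in $\Rep_{\ZZ[\frac1p]}(G)_r$ — this follows from the existence of a projective generator of finite type in that block together with the fact that finitely generated smooth representations over a Noetherian base in which $p$ is invertible admit irreducible quotients — which is what makes the cogenerator argument in the second part go through.
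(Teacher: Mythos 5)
The decisive gap is in your very first step: you assert that a simple object $V$ of $\Rep_{\ZZ[\frac{1}{p}]}(G)_r$ is necessarily killed by some prime $\ell\neq p$, and neither parenthetical argument establishes this. What is true is that $\mathrm{Ann}_{\ZZ[\frac{1}{p}]}(V)$ is a prime ideal of $\ZZ[\frac{1}{p}]$, hence either $(\ell)$ or $(0)$; in the latter case each $\ell$ acts bijectively on $V$ (by simplicity $\ell V=V$ and $\ker(\ell)=0$), so $V$ is a $\QQ G$-module and is certainly not an $\FF_{\ell}G$-module. Your claim that the $\ZZ[\frac{1}{p}]$-action ``factors through a field'' is unwarranted (the image of $\ZZ[\frac{1}{p}]$ in the Schur division ring $\End_{\ZZ[\frac{1}{p}]G}(V)$ can be $\ZZ[\frac{1}{p}]$ itself), and ``simplicity forces $\ell V=0$'' is a non sequitur. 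The paper's proof devotes a separate argument precisely to this characteristic-zero case: one regards $V$ as a $\QQ G$-module, embeds $V\hookrightarrow V\otimes_{\QQ}\Qlb$, embeds the latter into a direct product of copies of $I_{\ell}$, and then notes that some projection is nonzero on $V$, hence injective by simplicity. As written, your proof has no argument covering such simples, so the first (and main) assertion is not proved; you would either have to treat this case as the paper does, or prove it cannot occur, which is not at all formal.

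There is also a flaw in your ``cleanest route'' for the torsion case: $I_{\ell}=\Zlb[G/H]_r^{\vee,\ell}$ is an $\ell$-divisible $\Zlb$-module (it is smooth $\Hom$ into the divisible module $\Qlb/\Zlb$ from an $\ell$-torsion-free module), so $I_{\ell}\otimes_{\Zlb}\Fl=0$ and the asserted isomorphism $\Hom_{\ZZ[\frac{1}{p}]G}(V,I_{\ell})\otimes_{\FF_{\ell}}\Fl\cong\Hom_{\Fl G}(V\otimes\Fl,I_{\ell}\otimes\Fl)$ cannot hold; the nonvanishing of $\Hom_{\ZZ[\frac{1}{p}]G}(V,I_{\ell})$ should instead be extracted from the duality adjunction $\Hom_{\Zlb G}(W,(\;)^{\vee,\ell})\cong\Hom_{\Zlb}(\,\cdot\otimes_{\Zlb G}W,\Qlb/\Zlb)$ recorded just before the lemma, or from the fact (used by the paper) that $I_{\ell}$ contains every simple object of $\Rep_{\Zlb}(G)_r$. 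By contrast, your argument for the second assertion --- that direct products of the $I_{\ell}$ form a cogenerating class of injectives, with injectivity of the evaluation map detected on a simple quotient of a finitely generated submodule of the kernel --- is essentially identical to the paper's and is fine, once the first assertion is actually established.
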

\begin{proof}
If $V$ is killed by multiplication by $\ell$ for some $\ell$ then we can regard $V$ as a $\Zlb G$-module and the claim is clear.  Otherwise multiplication by $\ell$ is invertible on $V$ for all $\ell$ and we can regard $V$ as a $\QQ G$-module. Then $V$ embeds in $V \otimes_{\QQ} \Qlb$, and the latter embeds in a direct product of copies of $I_{\ell}.$  We thus obtain an embedding of $V$ in a product of copies of $I_{\ell}$; the projection of $V$ to at least one of these copies is nonzero and thus injective.

To prove the final claim it suffices to show that every object of $\Rep_{\ZZ[\frac{1}{p}]}(G)_r$ embeds in a direct product of copies of $I_{\ell}$ for varying $\ell$.  Fix an object $V$ of this category, and for each pair of subobjects $(W,W')$ of $V$ such that $W$ is contained in $W'$ and the quotient $W'/W$ is simple, fix an embedding $\iota_{W,W'}$ of $W'/W$ into an injective $I_{W,W'}$ that is isomorphic to $I_{\ell}$ for some $\ell \neq p.$  Then we may regard $\iota_{W,W'}$ as a map from $W'$ to $I_{W,W'}$, and injectivity of $I_{W,W'}$ allows us to extend this map to a map: ${\hat \iota}_{W,W'}: V \rightarrow I_{W,W'}$.  It suffices to show that the product of the maps ${\hat \iota}_{W,W'}$ is an embedding of $V$ in $\prod_{W,W'} I_{W,W'}$; suppose otherwise.  Then the kernel of this product contains a nonzero, finitely generated subobject $W'$; this subobject admits a simple quotient, so we may fix a further subobject $W$ of $W'$ with $W'/W$ simple.  But the map ${\hat \iota}_{W,W'}$ is nonzero on $W'$, contradicting the fact that $W'$ is contained in the kernel of the product map.
\end{proof}

\begin{corollary} \label{cor_stability}
Every object of $\Rep_{\ZZ[\frac{1}{p}]}(G)_r$ is $K,P$-stable.
\end{corollary}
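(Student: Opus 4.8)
The plan is to reduce to the modules $I_{\ell}$ and then invoke the uniform stability constant. Fix $K$, $P$ and $\lambda$ as above, and let $c$ be the absolute constant produced by Lemma~\ref{lem_uniform_constant}. First I would note that each $I_{\ell}$, viewed as a $\Zlb G$-module, lies in $\Rep_{\Zlb}(G)_r$: it is the smooth $\Zlb$-dual $\Zlb[G/H]_r^{\vee,\ell}$ of $\Zlb[G/H]_r$, and (as already used above) smooth duality respects the depth decomposition. Hence Lemma~\ref{lem_uniform_constant} applies and shows that $c$ is simultaneously a constant of $(K,P,\lambda)$-stability for \emph{every} $I_{\ell}$; this uniformity in $\ell$ is the crucial point.

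Now let $V$ be an arbitrary object of $\Rep_{\ZZ[\frac{1}{p}]}(G)_r$. By Lemma~\ref{lem_resolution} there is an exact sequence $0 \to V \to J^0 \to J^1$ in which $J^0$ and $J^1$ are direct products of copies of the various $I_{\ell}$. Since all these factors admit $c$ as a constant of stability, the remarks preceding Lemma~\ref{lem_uniform_constant} (stability of arbitrary direct products under a uniform bound on the constants) show that $J^0$ and $J^1$ are $K,P$-stable with constant $c$. Finally, the same remarks show that the kernel of a morphism between $K,P$-stable modules is $K,P$-stable, with constant bounded by that of the source; applying this to $V = \ker(J^0 \to J^1)$ shows that $V$ is $K,P$-stable, with $c$ a constant of $(K,P,\lambda)$-stability.

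I do not expect a serious obstacle here: the genuinely non-formal content --- the existence of a single constant working for all residue characteristics $\ell$ --- has already been extracted in Lemma~\ref{lem_uniform_constant} by transporting the question to ${\mathbb C}$. The one point that requires care is that one may not simply assert that a subobject of a $K,P$-stable module is $K,P$-stable, since the decomposition $V^K = V^K_0 \oplus V^K_*$ is not obviously inherited by subobjects; this is precisely why one works with the two-term resolution and realizes $V$ as a \emph{kernel} of a map between stable modules rather than merely as a submodule of one. Everything else is a routine application of the bookkeeping on stability constants developed earlier in the section.
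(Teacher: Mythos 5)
Your proof is correct and follows exactly the route the paper intends: the paper states this corollary without proof as an immediate consequence of Lemma~\ref{lem_uniform_constant}, Lemma~\ref{lem_resolution}, and the preceding remarks on kernels and uniformly bounded products, and you have filled in precisely those steps. Your observation that one should realize $V$ as the kernel of the first two terms of the injective resolution, rather than merely as a submodule of $J^0$, is exactly the right point of care.
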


For $V$ a smooth $\ZZ[\frac{1}{p}]G$-module,  define $V^{\vee}$ to be the set of $G$-smooth elements of $\Hom_{\ZZ[\frac{1}{p}]}(V, \QQ/\ZZ[\frac{1}{p}])$.  This gives a faithful duality functor on $\Rep_{\ZZ[\frac{1}{p}]}(G)_r$.  As with the latter functors $V \mapsto V^{\vee,\ell}$ introduced earlier, the functor $V \mapsto V^{\vee}$ takes projectives to injectives and the ``double dual'' map $V \mapsto (V^{\vee})^{\vee}$ is an isomorphism for $V$ a simple $\ZZ[\frac{1}{p}]$-module.  Using this duality and our stability results, we can prove the following version of Jacquet's lemma for arbitrary objects of $\Rep_{\ZZ[\frac{1}{p}]}(G)_r:$

\begin{lemma} \label{lem:jacquet}
Let $V$ be an object of $\Rep_{\ZZ[\frac{1}{p}]}(G)_r$, and $P = MU$, $\overline{P} = M\overline{U}$ parabolic subgroups of $G$ with $P \cap \overline{P} = M$.  There is a natural isomorphism:
$$R_P (V^{\vee}) \cong (R_{\overline{P}}V)^{\vee}$$
where $\overline{P}$ denotes the opposite parabolic to $P$.
\end{lemma}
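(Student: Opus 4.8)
The plan is to realise the desired isomorphism through a \emph{Casselman pairing}, whose construction becomes available over $\ZZ[\frac1p]$ precisely because of the stability results obtained above. Fix once and for all a totally positive central element $\lambda$ of $M$ (so that $\lambda^{-1}$ is totally positive for $\overline P$) and a cofinal family $\mathcal K$ of pro-$p$ compact open subgroups of $G$ that are decomposed with respect to $(P,\overline P)$; write $T_\lambda=\mathbf 1_{K\lambda K}$ and $T_{\lambda^{-1}}=\mathbf 1_{K\lambda^{-1}K}$. We first record some elementary points. Since $V$ has bounded depth, so does $V^{\vee}$ (a functional of depth $r'\neq r$ pairs trivially with $V$), so Corollary~\ref{cor_stability} applies to $V$ and to $V^{\vee}$. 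For $K\in\mathcal K$ the canonical pairing $e_V\colon V^{\vee}\times V\to\QQ/\ZZ[\frac1p]$ restricts to an identification $(V^{\vee})^{K}\risom\Hom_{\ZZ[\frac1p]}(V^{K},\QQ/\ZZ[\frac1p])$ (here $K$ pro-$p$ is used, so that $V_K\cong V^{K}$), and $T_\lambda$ on $(V^{\vee})^{K}$ is transpose, under $e_V$, to $T_{\lambda^{-1}}$ on $V^{K}$ (up to a unit of $\ZZ[\frac1p]$, if one insists on the precise normalisation of the Hecke operators). Finally $\delta_P(\lambda)$ is a power of $q$, hence a unit of $\ZZ[\frac1p]$; this is the only reason we must invert $p$, and it is exactly what makes the unnormalised statement twist-free, the two modulus characters cancelling.

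The first step is to construct a natural $M$-invariant pairing $\langle\,\cdot\,,\,\cdot\,\rangle_V\colon R_P(V^{\vee})\times R_{\overline P}(V)\to\QQ/\ZZ[\frac1p]$. Apply Corollary~\ref{cor_stability} to obtain, for $K\in\mathcal K$, decompositions $(V^{\vee})^{K}=(V^{\vee})^{K}_{0}\oplus(V^{\vee})^{K}_{*}$ (with respect to $(K,P,\lambda)$) and $V^{K}=V^{K}_{\overline 0}\oplus V^{K}_{\overline *}$ (with respect to $(K,\overline P,\lambda^{-1})$), where $T_\lambda$, resp.\ $T_{\lambda^{-1}}$, is invertible on the starred summand and nilpotent on the other. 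By Jacquet's lemma --- which for a general object of $\Rep_{\ZZ[\frac1p]}(G)_r$ follows from Corollary~\ref{cor_stability} together with the invertibility of $\lambda$ on Jacquet modules --- the canonical projections induce isomorphisms $(V^{\vee})^{K}_{*}\risom R_P(V^{\vee})^{K_M}$ and $V^{K}_{\overline *}\risom R_{\overline P}(V)^{K_M}$, with kernels the nilpotent summands. Given $\bar x\in R_P(V^{\vee})^{K_M}$ and $\bar y\in R_{\overline P}(V)^{K_M}$ one then sets $\langle\bar x,\bar y\rangle_V:=e_V(x,y)$, where $x\in(V^{\vee})^{K}$ is any lift of $\bar x$ and $y\in V^{K}_{\overline *}$ is the distinguished lift of $\bar y$; this is independent of $x$ because changing $x$ by $\delta\in(V^{\vee})^{K}_{0}$ changes $e_V(x,y)$ by $e_V(\delta,y)$, and writing $y=T_{\lambda^{-1}}^{c}y'$ (possible since $T_{\lambda^{-1}}$ is invertible on $V^{K}_{\overline *}$) gives $e_V(\delta,y)=e_V(\delta,T_{\lambda^{-1}}^{c}y')$, a unit times $e_V(T_\lambda^{c}\delta,y')$, which vanishes once $c$ exceeds the nilpotence degree of $T_\lambda$ on $(V^{\vee})^{K}_{0}$. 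One checks in the usual way (as in Casselman's notes, or for $\GL_n$ in~\cite{HelmBC}) that $\langle\,\cdot\,,\,\cdot\,\rangle_V$ is independent of $K\in\mathcal K$, $M$-invariant, and natural in $V$; it thus induces a natural $M$-equivariant morphism $R_P(V^{\vee})\to(R_{\overline P}V)^{\vee}$.

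The second step is to prove that this morphism is an isomorphism, for which it suffices to check it on $K_M$-invariants for each $K\in\mathcal K$. Under the identifications of the previous paragraph the map $R_P(V^{\vee})^{K_M}\to\big((R_{\overline P}V)^{\vee}\big)^{K_M}=\Hom_{\ZZ[\frac1p]}\big(R_{\overline P}(V)^{K_M},\QQ/\ZZ[\frac1p]\big)$ is the one obtained from the isomorphism $e_V\colon(V^{\vee})^{K}\risom\Hom_{\ZZ[\frac1p]}(V^{K},\QQ/\ZZ[\frac1p])$ by restriction to $(V^{\vee})^{K}_{*}$ and $V^{K}_{\overline *}$. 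Now the transpose relation between $T_\lambda$ and $T_{\lambda^{-1}}$, together with nilpotence of $T_\lambda$ on $(V^{\vee})^{K}_{0}$ and of $T_{\lambda^{-1}}$ on $V^{K}_{\overline 0}$, gives (by the same short computation as above) $e_V\big((V^{\vee})^{K}_{0},V^{K}_{\overline *}\big)=0$ and $e_V\big((V^{\vee})^{K}_{*},V^{K}_{\overline 0}\big)=0$. Hence the isomorphism $e_V$ is ``block-diagonal'' for the two direct-sum decompositions, and therefore restricts to an isomorphism $(V^{\vee})^{K}_{*}\risom\Hom_{\ZZ[\frac1p]}(V^{K}_{\overline *},\QQ/\ZZ[\frac1p])$, which is the map we wanted. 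Only the ``one-sided'' perfectness of $e_V$ at each level is used; no reflexivity of the duality functor is needed. Passing to the colimit over $K\in\mathcal K$ yields the asserted natural isomorphism $R_P(V^{\vee})\cong(R_{\overline P}V)^{\vee}$.

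The main obstacle is the construction of the pairing in the first step: over $\mathbb{C}$ (or in the admissible setting generally) the required stabilisation is controlled by finite-dimensionality of spaces of invariants, whereas a general object of $\Rep_{\ZZ[\frac1p]}(G)_r$ is not admissible. The substitute is exactly Corollary~\ref{cor_stability}, which provides uniform nilpotence of $T_\lambda$ on the ``bad'' summand for \emph{all} objects; once this input is granted, the verifications of independence of $K$, $M$-invariance, naturality, and perfectness are routine adaptations of the classical arguments, parallel to the way the stability lemmas themselves were transported from~\cite{HelmBC}.
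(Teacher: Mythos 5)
Your argument is correct and is essentially the paper's own proof: the paper invokes Bernstein's strategy via $K,P$-stability and defers the details to the proof of Lemma 11.16 of \cite{HelmBC}, and what you have written out — the Casselman-type pairing on the starred summands of the stability decompositions of $(V^{\vee})^{K}$ and $V^{K}$, the orthogonality of the mixed blocks via the transpose relation between $T_{\lambda}$ and $T_{\lambda^{-1}}$, and the passage to the limit over a cofinal family of decomposed pro-$p$ subgroups — is precisely that argument, with Corollary~\ref{cor_stability} supplying the stability input that replaces admissibility. No substantive gap; the steps you label as routine (independence of $K$, $M$-equivariance with the modulus factors cancelling, the stability form of Jacquet's lemma) are exactly the ones the paper likewise leaves to the cited reference.
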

\begin{proof}
The proof follows the basic strategy of Bernstein, using $K,P$-stability of $V$ for a cofinal family of compact open subgroups $K$ of $G$; for $\GL_n(F)$ the details can be found in the proof of Lemma 11.16 of~\cite{HelmBC}, which carries over to the current setting with only minor changes, such as replacing the duality over $\Zlb$ with that over $\ZZ[\frac{1}{p}]$, and normalized parabolic restriction with non-normalized parabolic restriction.
\end{proof}

The proof of Corollary~\ref{cor_second_adj} is now more-or-less immediate; we only give a sketch as the proof closely follows the argument of \cite{HelmBC}, Theorem 11.18 (which itself is a slight modification of an argument of Bernstein).  We first note that it suffices to prove the theorem for $R = \ZZ[\frac{1}{p}]$.  In this case, one wishes to construct a natural isomorphism:
$$\Hom_{\ZZ[\frac{1}{p}] G}(I_P V, W) \cong \Hom_{\ZZ[\frac{1}{p}] M}(V, \delta_P R_{\overline{P}} W),$$
for all objects $V$ and $W$ of $\Rep_{\ZZ[\frac{1}{p}]}(M)$ and $\Rep_{\ZZ[\frac{1}{p}]}(G)$, respectively.
When $W$ is of the form $(W')^{\vee}$ for some object $W'$ of $\Rep_{\ZZ[\frac{1}{p}]}(G)$, such an isomorphism can be constructed formally from Jacquet's lemma (Lemma \ref{lem:jacquet}) using the fact that $R_P$ is left adjoint to $I_P$, together with the natural isomorphism $(I_P V)^{\vee} \cong I_P (\delta_P V^{\vee})$.

In particular the result holds when $W = \ZZ[\frac{1}{p}][G/H]_r^{\vee}$.  More generally, it holds for any direct product $\prod_{i \in I} \ZZ[\frac{1}{p}][G/H_i]_{r_i}^{\vee}$, as such a product is dual to the direct sum $\bigoplus_{i \in I} \ZZ[\frac{1}{p}][G/H_i]_{r_i}.$ Since any object of $\Rep_{\ZZ[\frac{1}{p}]}(G)$ has a resolution by direct products of copies of $\ZZ[\frac{1}{p}][G/H]_r^{\vee}$ for various $H$ and $r$ we can deduce the result in general.  

Once we have Corollary~\ref{cor_second_adj}, Corollaries~\ref{cor_noetherian} and~\ref{cor_induction_restriction} follow immediately from Corollaire 4.4 and Lemme 4.6 of~\cite{datfinitude}, respectively.

\medskip

Let us now prove Corollary~\ref{cor_integral}. Fix
$\pi\in\Irr_{\Ql}(G)$ and let $(M,\sigma)$ belong to its supercuspidal
support.  Suppose first that $\sigma$ is integral,
and let $L_{\sigma}$ be an $M$-stable $\Zl$-lattice in $\sigma$. Then for any
  parabolic  subgroup $P$ with Levi $M$, the $\Zl G$-module
  $i_{P}(L_{\sigma})$ is known to be admissible. Therefore,
  if we choose $P$ such that $\pi\hookrightarrow i_{P}(\sigma)$, the $\Zl G$-module
  $i_{P}(L_{\sigma})\cap \pi$ is a lattice in $\pi$.  Conversely, suppose
  that $\pi$ is integral and $L_{\pi}$ be a $G$-stable $\Zl$-lattice
  in $\pi$, and pick any  parabolic subgroup $P$ of $G$ with Levi
  $M$. As recalled above, we know from Iwasawa decomposition  that
  $r_{P}(L_{\pi})$ is a finitely generated $\Zl M$-module. Moreover,
   Corollary~\ref{cor_induction_restriction} (2) tells us that
  it is an admissible $\Zl M$-module. Since it is
  contained in, and generates, $r_{P}(\pi)$, this is a $\Zl M$-lattice
  in $r_{P}(\pi)$. In particular, the socle of $r_{P}(\pi)$ is
  integral. But any irreducible component of this socle is conjugate to $\sigma$ by uniqueness of
  cuspidal support.

 Here is a consequence of Corollary \ref{cor_integral} that may
 be worth mentioning.
\begin{corollary}
  Let $\pi$ be a simple integral $\Ql\G$-module such that $r_{\ell}(\pi)$ is
  cuspidal. Then $\pi$ is cuspidal. 
\end{corollary}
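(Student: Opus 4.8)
The plan is to show directly that the (unnormalized) Jacquet module $\R_{P}(\pi)$ vanishes for every proper parabolic subgroup $P=MU$ of $G$; normalization plays no role here. Since $\pi$ is integral, I would fix an admissible $G$-stable $\Zl$-lattice $L_{\pi}\subset\pi$ and write $r_{\ell}(L_{\pi}):=L_{\pi}\otimes_{\Zl}\Fl$ for the corresponding reduction, whose semisimplification is $r_{\ell}(\pi)$.

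The first step is to transport cuspidality across reduction, using exactness of the Jacquet functor. The functor $\R_{P}$ is exact on smooth representations over any ring in which $p$ is invertible, and, being a quotient by $V(U)=\sum_{u\in U}(u-1)V$, it commutes with $-\otimes_{\Zl}\Fl$; hence $\R_{P}(L_{\pi})\otimes_{\Zl}\Fl\cong\R_{P}(r_{\ell}(L_{\pi}))$. As $r_{\ell}(L_{\pi})$ has finite length (being the reduction of an irreducible admissible representation), its Jordan--H\"older constituents are exactly those of $r_{\ell}(\pi)$, which are cuspidal by hypothesis and therefore killed by $\R_{P}$; exactness of $\R_{P}$ then gives $\R_{P}(r_{\ell}(L_{\pi}))=0$. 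Thus $\R_{P}(L_{\pi})=\ell\cdot\R_{P}(L_{\pi})$. On the other hand, exactness of $\R_{P}$ applied to $L_{\pi}\hookrightarrow\pi$ shows that $\R_{P}(L_{\pi})$ embeds into $\R_{P}(\pi)$, so it is $\ell$-torsion free.

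The second step is an integrality argument. Because $\Ql=\Zl[\tfrac{1}{\ell}]$, an $\ell$-torsion free and $\ell$-divisible $\Zl$-module is already a $\Ql$-vector space; hence the natural map $\R_{P}(L_{\pi})\to\R_{P}(L_{\pi})\otimes_{\Zl}\Ql=\R_{P}(\pi)$ is an isomorphism. Now I invoke Corollary~\ref{cor_induction_restriction}(2): since $L_{\pi}$ is admissible over $\Zl G$, the module $\R_{P}(L_{\pi})$ is admissible over $\Zl M$, so for every compact open subgroup $H'\subset M$ the space $\R_{P}(\pi)^{H'}=\R_{P}(L_{\pi})^{H'}$ is finitely generated over $\Zl$. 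But it is also a $\Ql$-vector space, on which $\ell$ acts invertibly; since $\Zl$ is local with $\ell$ in its maximal ideal, Nakayama forces $\R_{P}(\pi)^{H'}=0$. Letting $H'$ shrink gives $\R_{P}(\pi)=0$, so $\pi$ is cuspidal.

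The only substantial ingredient is Corollary~\ref{cor_induction_restriction}(2) --- that parabolic restriction preserves $\Zl$-admissibility --- which itself rests on the second adjointness of Corollary~\ref{cor_second_adj}; granting that, the rest is a routine manipulation with exactness of $\R_{P}$ and $\ell$-adic divisibility. One could also remain over $\Zl$ throughout: taking $H'$ pro-$p$ makes $(-)^{H'}$ exact, so that $\R_{P}(L_{\pi})^{H'}/\ell=0$ directly, and Nakayama over $\Zl$ then annihilates the finitely generated module $\R_{P}(L_{\pi})^{H'}$.
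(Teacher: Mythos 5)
Your proof is correct and takes essentially the same route as the paper: both arguments combine the exactness of the Jacquet functor and its compatibility with reduction mod $\ell$ (so that cuspidality of $r_{\ell}(\pi)$ forces the reduction of $\R_{P}(L_{\pi})$ to vanish) with the admissibility of $\R_{P}(L_{\pi})$ over $\Zl M$ furnished by Corollary~\ref{cor_induction_restriction}(2), and conclude by a Nakayama-type step on $H'$-invariants. The only cosmetic differences are your use of unnormalized restriction and the explicit divisibility/torsion-freeness bookkeeping, which the paper compresses into the statement that $r_{P}(L_{\pi})$ is a lattice in $r_{P}(\pi)$.
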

\begin{proof}
 Let $P\subset G$ be a parabolic subgroup and $M\subset P$ a Levi
 factor.
 As explained above, 
 if $L_{\pi}$ is a lattice in $\pi$, then $r_{P}(L_{\pi})$, being a finitely generated $\Zl M$-module, has to
 be a lattice in $r_{P}(\pi)$. In particular,
 $r_{\ell}(r_{P}(\pi))=r_{P}(r_{\ell}\pi)=0$, and therefore $r_{P}(\pi)=0.$
\end{proof}

\medskip

Finally, we recall the following consequence of second adjointness.

\begin{corollary} 
  Let $k$ be an algebraically closed field over $\ZM[1/p]$, let $M$ be a Levi subgroup of
  $G$,   and let $\sigma$ be an irreducible
  $kM$-module.
  \begin{enumerate}
  \item For any parabolic  subgroup $P$ of $G$ with Levi component $M$, the
    representation $i_{P}(\sigma\psi)$ is irreducible for $\psi$ in a Zariski-dense open subset of
    the $k$-torus of unramified characters of $M$.
  \item For two parabolic subgroups $P,Q$ of $G$ with Levi component $M$, we have
    $[i_{P}(\sigma)]=[i_{Q}(\sigma)]$ in the Grothendieck group of finite length $kG$-modules.
  \end{enumerate}
\end{corollary}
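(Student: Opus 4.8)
The plan is to run Bernstein's classical arguments — generic irreducibility of parabolic induction, and independence of the parabolic of the class of $i_P(\sigma)$ in the Grothendieck group — whose only non-formal ingredients are the geometric lemma of Bernstein--Zelevinsky (valid over any $\ZZ[\frac1p]$-algebra, see \cite{Vig96}) and second adjointness, now available over $k$ by Corollary~\ref{cor_second_adj}. Throughout write $\Psi(M)$ for the $k$-torus of unramified characters of $M$ and $W(M):=\mathrm{N}_G(M)/M$, which acts faithfully on $\Psi(M)$ since $M$ is the centraliser in $G$ of its maximal split central torus. As $\sigma$ is irreducible it is admissible, and $i_P$ preserves both admissibility (Iwasawa decomposition) and finite generation (Corollary~\ref{cor_induction_restriction}(1)), so each $i_P(\sigma\psi)$ has finite length.

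For (1), the first step is the endomorphism computation: by second adjointness $i_P\dashv r_{\overline P}$ one has $\End_{kG}(i_P(\sigma\psi))=\Hom_{kM}(\sigma\psi,\,r_{\overline P}\,i_P(\sigma\psi))$, and by the geometric lemma $r_{\overline P}\,i_P(\sigma\psi)$ carries a finite filtration whose graded piece for the trivial double coset is $\sigma\psi$, every constituent of the remaining pieces having cuspidal support obtained from that of $\sigma\psi$ by a non-trivial element of $W_G$. The second step is a genericity argument: using faithfulness of $W(M)$ on $\Psi(M)$ and finiteness of the cuspidal support of $\sigma$, the set $\mathcal Z\subset\Psi(M)$ of those $\psi$ for which $\sigma\psi$ is a constituent of one of the non-trivial pieces (for $r_{\overline P}i_P$ or for $r_Pi_P$) is contained in a finite union of cosets of proper subtori, hence in a proper Zariski-closed subset; for $\psi\notin\mathcal Z$ one then gets $\End_{kG}(i_P(\sigma\psi))=\End_{kM}(\sigma)=k$ (Schur, $k$ algebraically closed), $i_P(\sigma\psi)$ is multiplicity-free, and it has simple socle and simple cosocle. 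The third step is the passage to irreducibility: after shrinking $\mathcal Z$ further so that the two canonical intertwining operators $i_P(\sigma\psi)\rightleftarrows i_{\overline P}(\sigma\psi)$ compose to a non-zero scalar (their composite acts on the $\sigma\psi$-isotypic part of the Jacquet module by an explicit, generically non-zero, rational $c$-function), one obtains an isomorphism $i_P(\sigma\psi)\cong i_{\overline P}(\sigma\psi)$; identifying, via the adjunctions and the genericity above, the socle of $i_{\overline P}(\sigma\psi)$ with the cosocle of $i_P(\sigma\psi)$, this isomorphism forces the simple socle and simple cosocle of $i_P(\sigma\psi)$ to coincide, so $i_P(\sigma\psi)$ is irreducible. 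This is exactly Bernstein's argument (over $\overline\QQ$ in \cite{BD}, over $\GL_n$ with general coefficients in \cite[\S11]{HelmBC}); as each step used only the geometric lemma and Corollary~\ref{cor_second_adj}, it transfers verbatim.

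For (2), any $Q$ with Levi $M$ is $\mathrm{N}_G(M)$-conjugate to $P$, and conjugation by an element of $G$ does not change the isomorphism class of a smooth $G$-module, so $i_Q(\sigma)\cong i_P({}^{w}\sigma)$ for the corresponding $w\in W(M)$, and it suffices to prove $[i_P({}^{w}\sigma)]=[i_P(\sigma)]$. By (1), off a proper closed subset of $\Psi(M)$ the canonical intertwining operator $i_P(\sigma\psi)\to i_Q(\sigma\psi)$ is an isomorphism of irreducible modules, so the maps $\psi\mapsto[i_P(\sigma\psi)]$ and $\psi\mapsto[i_Q(\sigma\psi)]$ agree on a Zariski-dense open subset. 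One concludes by specialisation: the universal inductions $i_P(\sigma\otimes_k\OC(\Psi(M)))$ and $i_Q(\sigma\otimes_k\OC(\Psi(M)))$ are flat and finitely generated over the integral domain $\OC(\Psi(M))$, the universal intertwining operator between them is injective with $\OC(\Psi(M))$-torsion cokernel of $\mathrm{Tor}$-dimension $\le 1$, and specialising at the identity while invoking the vanishing of Serre intersection multiplicities at the (regular) local ring there shows that the alternating sum of the $\mathrm{Tor}$-specialisations of the cokernel contributes nothing to the Grothendieck group of finite length $kG$-modules; hence $[i_P(\sigma)]=[i_Q(\sigma)]$. As in \cite{BD}, this specialisation is the only point that is not a purely formal consequence of the adjunctions.

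I expect the main obstacle to lie in the genericity bookkeeping of the second step of (1): proving that $\mathcal Z$ really is a \emph{proper} Zariski-closed subset amounts to showing that, on each of the subtori of $\Psi(M)$ cut out by the geometric lemma, the generic stabiliser — under $W(M)$, and under the subgroup of the Weyl group of the Levi supporting $\sigma$ that fixes the inertial class of its cuspidal support — is the minimal one. The non-vanishing of the composed intertwining operator in the third step is the only other non-formal input; everything else is either formal adjunction calculus or a routine specialisation over the character torus.
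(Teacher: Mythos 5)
The central gap is in the third step of your argument for (1): you assert that the composite of the two canonical intertwining operators $i_P(\sigma\psi)\rightleftarrows i_{\overline{P}}(\sigma\psi)$ acts by a generically non-zero rational $c$-function, but this non-vanishing is exactly the hard point and it is not a formal consequence of the geometric lemma plus Corollary~\ref{cor_second_adj}. Algebraically the composite is a scalar in $\mathrm{Frac}(\mathcal{O}(\Psi(M)))$, and without already knowing generic irreducibility a composite of two non-zero maps between the two inductions can perfectly well be zero, so the argument is circular at this point. Over $\mathbb{C}$ the non-vanishing comes from harmonic-analytic input (unitarity of the operators at unitary points, Harish-Chandra's $\mu$-function), which has no counterpart over an algebraically closed field of characteristic $\ell$; this is precisely why modular generic irreducibility was open, and why the paper does \emph{not} re-run Bernstein's intertwining-operator argument. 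Its proof of (1) is a citation: Theorem 5.1 of \cite{Datnu} proves the statement by a route avoiding intertwining operators, under a hypothesis (property i) of Proposition 3.14 of loc.\ cit., previously obtained from cocompact lattices) which is now supplied by second adjointness via \cite[Lemme 4.12]{datfinitude}. Neither \cite{BD} (complex coefficients) nor \cite[\S 11]{HelmBC} (second adjointness for $\GL_n$) provides the missing non-vanishing over $k$, so the claim that the argument ``transfers verbatim'' is not justified; your own genericity bookkeeping in step two is also more delicate in characteristic $\ell$ because uniqueness of (super)cuspidal support is not available in general.

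For (2), the opening reduction is false: parabolic subgroups with the same Levi component $M$ need not be $\mathrm{N}_G(M)$-conjugate. For $G=\GL_3$ and $M=\GL_2\times\GL_1$ one has $\mathrm{N}_G(M)=M$, yet $P\neq\overline{P}$, so the reduction misses exactly the crucial case of opposite parabolics. The specialization argument you then sketch presupposes injectivity of the universal intertwining operator, i.e.\ the same non-vanishing as above, and the Serre-multiplicity step would have to be carried out in the Grothendieck group of finite length $kG$-modules rather than numerically, which requires additional work (over $k$ of characteristic $\ell$ one also cannot fall back on the classical trace argument, since traces do not determine semisimplifications). The paper's proof of (2) is again a citation, namely \cite[Lemme 4.13]{datfinitude}, which applies once second adjointness (Corollary~\ref{cor_second_adj}) is in place.
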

\begin{proof}
(1) A proof is given in Theorem 5.1 of \cite{Datnu} under the assumption that there exist
cocompact lattices in $G$. However, this hypothesis is only used in
order to get property $i)$ of Proposition 3.14
of loc.cit, and apply the implication $i)\Rightarrow iii)$ of that
proposition. But property i) of that proposition also follows from second adjointness, by
\cite[Lemme 4.12]{datfinitude}. Finally, (2) is \cite[Lemme 4.13]{datfinitude}.
\end{proof}

 \subsection*{Acknowledgements} The first author was partially
    supported by ANR grant COLOSS ANR-19-CE40-0015. The second author
    was partially supported by EPSRC New Horizons grant
    EP/V018744/1. The third author was supported by EPSRC grant
    EP/V001930/1 and the Heilbronn Institute for Mathematical
    Research. The fourth author was partially supported by NSF grant
    DMS-2001272. We would like to thank  Tony Feng for providing us with motivation and impetus for
    revisiting these finiteness problems in the representation theory of $p$-adic
    groups. We thank David Hansen and Peter Scholze for their interest and valuable
    discussions on the content of this paper. Finally we thank
    Marie-France Vign\'{e}ras and Guy Henniart for their careful reading of a preliminary version.

\bibliographystyle{alpha}
\bibliography{finiteness}

\end{document}